\documentclass[10pt]{amsart}

\usepackage{amssymb}\usepackage{amsmath} 
\usepackage[T1]{fontenc}
 \usepackage{textcomp}
\usepackage[bookmarks=false,pdfstartview={FitBH},colorlinks={true},
pdfview={FitBH}]{hyperref}
\usepackage{orcidlink}
\usepackage[numbers]{natbib}
\usepackage{newpxtext,newpxmath}
\usepackage{enumitem}

\newtheorem{theorem}{Theorem}
\newtheorem{lemma}[theorem]{Lemma}

\newtheorem{question}[theorem]{Question}

\newtheorem{corollary}[theorem]{Corollary} 
\theoremstyle{definition}
\newtheorem{definition}[theorem]{Definition}

\theoremstyle{remark}

\newcommand{\converge}{\!\downarrow}

\newcommand{\imply}{\Rightarrow}

\newcommand{\Fraisse}{Fra{\"{i}}ss{\'{e}}}

\newcommand{\om}{\omega}

\newcommand{\sse}{\subseteq}
\newcommand{\contains}{\supseteq}

\newcommand{\mK}{\mathcal{K}}

\DeclareMathOperator{\Lev}{Lev}
\DeclareMathOperator{\AC}{AC}
\DeclareMathOperator{\Con}{Con}

\begin{document}

\author{Peter Cholak}
\address{University of Notre Dame}  
\email{cholak@nd.edu}
\author{Natasha Dobrinen}
\email{ndobrine@nd.edu}
\author{Charles McCoy}
\email{cmccoy@holycrossusa.org}

\subjclass[2020]{03D80, 03E75, 05C55, 05C10}

\keywords{}

\title{The Henson graphs: colorings and codings}

\thanks{Cholak was partially supported by NSF-DMS-2502292 and Erwin Schr{\"o}dinger Institute. ORCID (cholak): \href{https://orcid.org/0000-0002-6547-5408}{\orcidlink{0000-0002-6547-5408}{0000-0002-6547-5408}}. Dobrinen was partially supported by NSF-DMS-2300896 and Erwin Schr{\"o}dinger Institute.
ORCID (Dobrinen): \href{https://orcid.org/0000-0001-6360-0001}{\orcidlink{0000-0001-6360-0001}{0000-0001-6360-0001}}.}

\maketitle

\begin{abstract}
   By recent work of \citet{Dobrinen_ICM} and \citet{Balko7} we know that every finite $G$ in the Henson graph $\mathbb{H}_{n+1}$ (the universal ultrahomogeneous $(n+1)$-clique free graph) has  exact finite big Ramsey degree $k({G,n})$. That is, there is a positive integer $k({G,n})$ such that for each finite coloring $C$ of the copies of $G$ in $\mathbb{H}_{n+1}$, there is $\tilde{\mathbb{H}}$, a substructure of $\mathbb{H}_{n+1}$ and isomorphic to $\mathbb{H}_{n+1}$, such that in $\tilde{\mathbb{H}}$ at most $k({G,n})$ colors are used on the copies of $G$ in $\tilde{\mathbb{H}}$. Moreover, for exactness,  for some coloring and all corresponding  $\tilde{\mathbb{H}}$, all $k({G,n})$ colors are needed. The ultimate result here is that if $|G|\geq 2$, then there is a finite computable coloring $C$ such that,  for  all such  $\tilde{\mathbb{H}}$, we have that $\tilde{\mathbb{H}}$ computes  $\emptyset^{(|G|-1)}$ (and hence the halting set).
\end{abstract}

\section{Introduction}

\begin{definition}
     Given a countable infinite structure $\mathbb{B}$ and a finite substructure $\mathbb{A}$ of $\mathbb{B}$. 
     Let $\mathcal{R}_{\mathbb{A},\mathbb{B}}(C,\tilde{\mathbb{H}})$ be the statement:  ``$C$ is a finite coloring of all copies of $\mathbb{A}$ in $\mathbb{B}$,  $\tilde{\mathbb{H}}$ is a substructure of $\mathbb{B}$, isomorphic to
     $\mathbb{B}$,
     %$\tilde{\mathbb{H}}$, 
     and all copies of $\mathbb{B}$ within $\tilde{\mathbb{H}}$ have the same fixed color." We say $\mathbb{A}$ has the \emph{Ramsey Property} in $\mathbb{B}$ iff, for all $C$, there is an $\tilde{\mathbb{H}}$ such that $\mathcal{R}_{\mathbb{A},\mathbb{B}}(C,\tilde{\mathbb{H}})$. 
\end{definition}

Here we will use various versions of  Henson graphs for $\mathbb{B}$.  Recall, for each $n\ge 2$, the $(n+1)$-st Henson graph, $\mathbb{H}_{n+1}$, is the universal, ultrahomogeneous, $(n+1)$-clique free, graph (for details, see Section~\ref{H3}). It is also the \Fraisse\ limit of  the class of all finite $(n+1)$-clique free graphs. It is well-known that nodes 
%and (some) anticliques 
have the Ramsey Property in any Henson graph; this is due to work of Komjath and R\"{o}dl for $n=2$ \cite{Komjath/Rodl86}, and El-Zahar and Sauer for all $n\ge 3$ \cite{El-Zahar/Sauer89}
(see Theorem~\ref{Indivisibility}).
% {\color{red}  
% and \ref{frd1}, respectively,
% and pairs in  $\mathbb{H}_3$, see Theorem~\ref{frd3}.   (ND:  This is not stated right, because the far anticliques etc have one color in the expanded Henson graph,  that is, in the diary, but definitely not in the Henson graph.  Do you want to reword this or just take it out?)}
Throughout the paper $\tilde{\mathbb{H}}$ will always be a copy of some $\mathbb{H}_{n+1}$ and all colorings considered are finite colorings.

The theorem that nodes have the Ramsey Property in $\mathbb{H}_{n+1}$ is often stated as that $\mathbb{H}_{n+1}$ is {\em indivisible}. If we divide the nodes of $\mathbb{H}_{n+1}$ into finitely many  pieces, then one of the pieces must contain a copy of $\mathbb{H}_{n+1}$.  Indivisibility is very important to our work; it provides the needed local structure of $\mathbb{H}_{n+1}$ and is discussed in Section~\ref{indi}. A proof of indivisibility for $\mathbb{H}_{n+1}$ appears in Section~\ref{proof:Indivisibility}.  This proof is done in the style of a forcing construction and hence yields computability theoretic corollaries and in addition might provide more clarity than other published proofs. 
% {\color{red}  (ND:  Do you want to say something about this being a simpler new proof with a stronger computability-theoretic conclusion?)}
% Other than this we will not discuss the proofs of these theorems mentioned above here. 
% {\color{red}  (ND:  I don't understand this sentence, because we are going to prove Theorems 5.1, 7.3, and 8.3)}

When we say these theorems are true, we mean that they hold in the two sorted structure $(\mathbb{N},2^{\mathbb{N}},+,\times,0,1<)$. In this realm, $C$ and $\tilde{\mathbb{H}}$ are sets, i.e., members of $2^{\mathbb{N}}$, and $\mathcal{R}_{\mathbb{A},\mathbb{B}}(C,\tilde{\mathbb{H}})$ is an arithmetic formula (no quantification over sets), given the set $\mathbb{B}$, and the $\Pi^1_2$ sentence $\forall C \exists \tilde{\mathbb{H}}\mathcal{R}_{\mathbb{A},\mathbb{B}}(C,\tilde{\mathbb{H}})$ is true, i.e., $$(\mathbb{N},2^{\mathbb{N}},+,\times,0,1<) \models \forall C \exists \tilde{\mathbb{H}}\mathcal{R}_{\mathbb{A},\mathbb{B}}(C,\tilde{\mathbb{H}}).$$
Here we want to consider $\mathcal{P} \subseteq 2^{\mathbb{N}}$ where
$$(\mathbb{N},\mathcal{P},+,\times,0,1<) \models \forall C \exists \tilde{\mathbb{H}}\mathcal{R}_{\mathbb{A},\mathbb{B}}(C,\tilde{\mathbb{H}}).$$  Here will restrict ourselves to only those nonempty $\mathcal{P}$ which are closed under Turing reducibility (this will be important later when we use Definition~\ref{equiv}). For shorthand, we will just say: ``$\mathcal{P}$ realizes $\forall C \exists \tilde{\mathbb{H}}\mathcal{R}_{\mathbb{A},\mathbb{B}}(C,\tilde{\mathbb{H}})$." 
% Also we will always consider $C$ as a coloring. Since being a coloring is an arithmetic property we assume if $C \in \mathcal{P}$ is not a coloring we will assume that $\mathcal{P}$ realizes $\forall \tilde{\mathbb{H}}\mathcal{R}_{\mathbb{A},\mathbb{B}}(C,\tilde{\mathbb{H}})$.

There are a number of questions we can ask about $\mathcal{P}$ for a given $\Pi^1_2$ sentence, but here will only focus on three.   What happens when $\mathcal{P}=\mathcal{ARITH}$,
the set of arithmetic sets? Can $\mathcal{P}$ be the set of all computable sets?
% {\color{red}  (ND:    Is $\mathcal{ARITH}$ standard font?)}
Is $\mathcal{P}$ closed under Turing jump?   In terms of reverse mathematics, a positive answer to the first question implies the sentence is provable in $ACA_0$, a negative answer to the second question implies that $RCA_0$ is not enough to prove the sentence, and a positive answer to the third question implies that  this sentence implies $ACA_0$.

In a forthcoming paper, Cholak, Dobrinen and Towsner will show the first question has a positive answer for all the $\Pi^1_2$ sentences considered here. For node colorings  in $\mathbb{H}_3$ we do provide  a proof in this paper that $\mathcal{ARITH}$ is enough, and more (again see Section~\ref{proof:Indivisibility}).  In some sense this is the base case for the forthcoming proof of Cholak, Dobrinen and Towsner. By \citet{dauriac2023carlsonsimpsonslemmaapplicationsreverse} it is known that if $\mathcal{P}$ is closed under the $\omega$-jump then the $\Pi^1_2$ statements we consider are true in $\mathcal{P}$.

For the second question and nodes as $\mathbb{A}$, Gill \cite{Gill23} provides a negative answer.  For completeness we include a proof of this result using our methods and the idea of a neighborhood set coloring. 
% {\color{red} (ND:  Do you want to say that it is a different proof with a tighter result?)} and some questions drawing from our work in Section~\ref{nodes}. 

For so-called  ``close" pairs in $\mathbb{H}_3$ (as $\mathbb{B}$) and (some) anticliques as $\mathbb{A}$ we provide a positive answer to the third question, i.e.\ any $\mathcal{P}$ which realizes $\forall C \exists \tilde{\mathbb{H}}\mathcal{R}_{\mathbb{A},\mathbb{B}}(C,\tilde{\mathbb{H}})$ must be closed under the Turing jump.  For details, see Theorems~\ref{vcp0} and \ref{CodesJump}, respectively. We need to discuss the framework on these proofs.

\begin{definition}
    Let $\varphi = \forall C \exists H \mathcal{R}(C,H)$ be a $\Pi^1_2$ sentence.  We say that $\varphi$ \emph{codes} $\emptyset^{(k)}$ iff, for all $X$, there is a $C \leq_T X$ such that for $H$ if $\mathcal{R}(C,H)$ then $H$ computes $X^{(k)}$.
\end{definition}

% {\color{red}  (ND:  Is the $C$ above assumed to be computable in $X$?  If yes,  I think we should say that.)}

 Our results here follow an earlier result of Jockusch \cite{J72}. In our current language, Jockusch's result is stated as ``Ramsey's  Theorem for $n$-tuples" (for all finite colorings of $n$-tuples of natural numbers there is a homogeneous set) codes $\emptyset^{(n-2)}$, see Corollary~\ref{joc72}. But here we do not have  homogeneous sets available.  We have to use sets of monochromatic largely $2$-extendable tuples, see Section~\ref{sec:l2e}. Section~\ref{sec:colorings} introduces the needed colorings and shows that the  Jockusch result holds more generally for  monochromatic largely $2$-extendable sets rather than just homogeneous sets.  Here we prove this for $X = \emptyset$ and then use relativization for arbitrary $X$.

A major difficulty in working with the Ramsey property for pairs in $\mathbb{H}_{3}$ or anticliques in $\mathbb{H}_{n+1}$ is how to computably in $\tilde{\mathbb{H}}$ extract the needed monochromatic largely $2$-extendable sets from the resulting $\tilde{\mathbb{H}}$.   This takes some local results about $\mathbb{H}_{n+1}$.  For more details, see Theorems~\ref{le4}, \ref{lpp} and \ref{thm.closele} and the two paragraphs above Theorem~\ref{thm.closele}. Throughout the paper we will use finite structures called
\emph{far anticliques} and \emph{close graphs}, see Definitions~\ref{farac} and \ref{defclose}.  We point out that these finite structure are definable from the given enumeration of main structure, $\mathbb{H}_{n+1}$. 

One interesting corollary of our work on anticliques is that we learn that one of the $\Pi^1_2$ sentences considered here is stronger than ``Ramsey's Theorem for $k$-tuples", see Theorem~\ref{ramsey}.  It turns out that far anticliques of size $k$ have the Ramsey Property in $\mathbb{H}_{n+1}$ (a special case of Theorem 9.9 in \cite{DobrinenJML23}).
Let $C$ be a finite coloring of $k$-tuples of the natural numbers.  This computably and uniformly induces a coloring of far anticliques of size $k$.  Let  $\tilde{\mathbb{H}}$ be a  copy of $\mathbb{H}_{n+1}$ where only one color from the induced coloring appears.  Then from $\tilde{\mathbb{H}}$ we can uniformly compute a homogeneous set for our coloring $C$. 
%{\color{red}  (ND:  Wouldn't you rather talk here about the Jockusch coloring on $2k$-tuples and the stronger result that we get?)}

Now  to our ultimate result: Let $G$ be a finite $(n + 1)$-clique free graph.  Unfortunately, $G$  does not have the Ramsey Property in $\mathbb{H}_{n+1}$, whenever $G$ has at least two nodes.  But, as proved in \cite{DobrinenJML20} and \cite{DobrinenJML23}, for some $k$, the  following $\Pi^1_2$ sentence,  ``$G$ has big Ramsey degree $k$  in $\mathbb{H}_{n+1}$," is true. I.e., there is a computable function $k({G,n})$ such that for all $G$ and $n$ the following statement is true: For all finite colorings $C$ of copies of $G$ in $\mathbb{H}_{n+1}$ there is an $\tilde{\mathbb{H}}$ where at most $k({G,n})$ colors appear among the copies of $G$ in $\tilde{\mathbb{H}}$.  Moreover, as shown in \cite{Balko7}, for some coloring and all corresponding  $\tilde{\mathbb{H}}$, all $k({G,n})$ colors are needed.
We prove that the $\Pi^1_2$ sentence ``$G$ has finite big Ramsey degree $k(G,n)$ in $\mathbb{H}_{n+1}$'' codes $\emptyset^{(|G|-1)}$. A summary of the details follows in the next paragraph but more details, related theorems, and references can be found in Section~\ref{finite}. 

Essentially, for each $G$, there are $k(G,n)$ many computable, definable (in the language of $\mathbb{H}_{n+1}$ and its enumeration) extensions of $G$. In the literature, these extensions are called \emph{diaries}, denoted $\Delta$. All these diaries have the Ramsey Property in $\mathbb{H}_{n+1}$. Let $\Delta_i$, for $i<k(G,n)$, be the diaries for $G$.  Assume we are given a finite coloring $C$ of all the copies of $G$ in $\mathbb{H}_{n+1}$. 
We may first restrict to a copy $\widehat{\mathbb{H}}$ of $\mathbb{H}_{n+1}$, computably defined in $\mathbb{H}_{n+1}$, so that 
each copy of $G$  in  $\widehat{\mathbb{H}}$ computably extends to a unique
extension,
$\Delta_i$.
Color that copy of $G$ by the pair $(i,C(G))$ (this is a well-defined coloring). 
Apply the  Ramsey Properties of the dairies to get an $\tilde{\mathbb{H}}$ (in $\widehat{\mathbb{H}}$)  which is  monochromatic on each dairy of $G$.  
In $\tilde{\mathbb{H}}$, the copies of $G$ have at most $k(G,n)$ colors and hence, $G$ has finite big Ramsey degree $k(G,n)$ in $\mathbb{H}_{n+1}$. From a diary $\Delta$ and $\tilde{\mathbb{H}}$ we  know how to extract monochromatic largely $2$-extendable sets for that diary but unfortunately we do not know how to computably extract such a set. But if we ignore the diaries and make some adjustments on which copies of $G$ (just the close ones) we will consider, we can computably in $\tilde{\mathbb{H}}$ extract a  largely $2$-extendable set of indexes for copies of $G$ in $\tilde{\mathbb{H}}$. If we let $C = J_{|G|+1}$ be the Jockusch coloring and we ignore the first 
coordinate
%comportment 
in our coloring, we can show this largely $2$-extendable set extracted computably is monochromatic. And now we can apply our methods from above.  This shows that the sentence ``$\forall C \exists \tilde{\mathbb{H}} \bigwedge_{i<k} \mathcal{R}_{\Delta_i,\mathbb{H}_{n+1}}(C,\tilde{\mathbb{H}})$" codes $\emptyset^{|G|-1}$.  
It turns out that in our domain of discourse, the two sentences ``$\forall C \exists \tilde{\mathbb{H}} \bigwedge_{i<k} \mathcal{R}_{\Delta_i,\mathbb{H}_{n+1}}(C,\tilde{\mathbb{H}})$" and ``$G$ has finite big Ramsey degree  $k(G,n)$ in $\mathbb{H}_{n+1}$'' are (strongly Weihrauch) equivalent. For more details, see Theorem~\ref{thm.onecolornearG} and the following two corollaries.

\section{Largely $2$-Extendable}\label{sec:l2e}

 Fix $X \subseteq \mathbb{N}$ infinite. $X$ is linearly ordered by $<$.  In this section we will use $n$ to denote the size of unordered tuples from $\mathbb{N}$.  Recall that $[X]^n = \{ Y \subseteq X : | Y | = n \}$, the set of unordered tuples of size $n$. We will denote such a tuple as $\bar{x} = ( x_0, x_1, \ldots, x_{n-1} )$, where $x_0 < x_1< \ldots < x_{n-1}$. For $n=0$, the only tuple considered  is the empty set, $\emptyset$. Let ${\mathcal{F}}$ be a family of $n$-tuples and let ${\mathcal{F}}_k$ be the restriction of the tuples in ${\mathcal{F}}$ to $k$-tuples, for $k \leq n$. So if $(  x_0, x_1, \ldots, x_{n-1} ) \in {\mathcal{F}}$ then $(  x_0, x_1, \ldots, x_{k-1} ) \in {\mathcal{F}}_k$.

\begin{definition}\label{def.l2e}
	We say a  family of tuples, $\mathcal{F} \subseteq [X]^n$, is \emph{largely $2$-extendable} iff 
    \begin{enumerate}
        \item $\mathcal{F}_1$ is infinite, 
        \item there exists an $x_0$ with infinitely many $x_1$ where $(x_0,x_1) \in \mathcal{F}_2$, and 
        \item for all $ (  \tilde{x}_0, \tilde{x}_1, \ldots, \tilde{x}_{n-1} ) \in \mathcal{F}$ (equivalently we can start with $ (  \tilde{x}_0, \tilde{x}_1) \in \mathcal{F}_2$), the following holds: 
		\begin{equation}\label{eq.LE}
	\begin{split}
		\forall l_2 \exists x_2 > l_2 \forall l_{3} \exists &x_{3} > l_{3} \ldots \forall l_{n-1} \exists x_{n-1} > l_{n-1} \\ (\tilde{x}_0, \tilde{x}_1, & x_2, x_{3}, \ldots x_{n-1} )  \in \mathcal{F}.
        	\end{split}
	\end{equation}
       \end{enumerate}
	
\end{definition}

Both $[\mathbb{N}]^n$ and $[X]^n$, for $X\sse\mathbb{N}$ infinite,  are largely $2$-extendable, for all $n$. If we were to Skolemize the definition (i.e., add functions $S_j$ which output $x_j$ from the previous data including $l_j$), there is no requirement that such functions should be computable. In our above two examples, however, the Skolem functions are computable (or as least computable in $X$): look for the least $x_j$ $(\in X)$ greater than $l_j$.  We will see some examples later where the Skolem functions are not computable.  

Let $i$ be such that $0\leq i <n $. An $i$-tuple, $ (x_0,x_1, \ldots x_{i-1}) \in {\mathcal{F}}_i$, has \emph{large $1$-point extensions} w.r.t.\ $\mathcal{F}$ iff for all $l_{i}$, there is an $(i+1)$-tuple, $(x_0,x_1, \ldots x_{i-1}, x_i) \in {\mathcal{F}}_{i+1}$, where $x_i > l_i$.  A tuple which does  not have large $1$-point extensions is called  a \emph{stub}.  The goal is to avoid stubs,  particularly in $\mathcal{F}_2$. 

We will use the following lemma to show a family $\mathcal{F}$ is largely $2$-extendable.  

\begin{lemma}\label{le1} Assume that $\mathcal{F}_2$ is largely $2$-extendable. Assume that there exists a subfamily of tuples, $\tilde{\mathcal{F}} \subseteq \mathcal{F}$, such that $\mathcal{F}_2 = \tilde{\mathcal{F}}_2$ and for all $i$, where $2 \leq i < n$, every tuple in $\tilde{\mathcal{F}}_i$ has large $1$-point extensions w.r.t.\ $\tilde{\mathcal{F}}$. Then $\mathcal{F}$ is largely $2$-extendable.  
\end{lemma}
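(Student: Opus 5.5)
Since $\mathcal{F}_2$ is largely $2$-extendable (as a family of $2$-tuples), clauses (1) and (2) of Definition~\ref{def.l2e} hold for $\mathcal{F}_2$. Clauses (1) and (2) for $\mathcal{F}$ only mention $\mathcal{F}_1$ and $\mathcal{F}_2$, and $(\mathcal{F}_2)_1=\mathcal{F}_1$ by definition of restriction, so both transfer to $\mathcal{F}$ at once. (If $n\le 2$ the lemma is immediate, since then clause (3) for $\mathcal{F}$ is trivial or coincides with the clause for $\mathcal{F}_2$.) The real work is clause (3), and the plan is to verify it inside $\tilde{\mathcal{F}}$ and then pass to $\mathcal{F}$ using $\tilde{\mathcal{F}}\sse\mathcal{F}$.

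Fix $(\tilde{x}_0,\tilde{x}_1)\in\mathcal{F}_2$, using the parenthetical remark in Definition~\ref{def.l2e} that clause (3) may be checked from $2$-tuples. Since $\mathcal{F}_2=\tilde{\mathcal{F}}_2$, we have $(\tilde{x}_0,\tilde{x}_1)\in\tilde{\mathcal{F}}_2$. I then play the game in \eqref{eq.LE} against an arbitrary sequence of challenges $l_2,l_3,\ldots,l_{n-1}$, maintaining the invariant that after stage $i$ the tuple $(\tilde{x}_0,\tilde{x}_1,x_2,\ldots,x_{i-1})$ lies in $\tilde{\mathcal{F}}_i$. At stage $i$ (for $2\le i<n$), the current $i$-tuple is in $\tilde{\mathcal{F}}_i$, so by hypothesis it has large $1$-point extensions with respect to $\tilde{\mathcal{F}}$. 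Given $l_i$, choose $x_i>l_i$ with $(\tilde{x}_0,\tilde{x}_1,x_2,\ldots,x_i)\in\tilde{\mathcal{F}}_{i+1}$, which restores the invariant. Formally this is an induction on $i$ from $2$ to $n-1$, or equivalently the construction of Skolem witnesses for the alternating quantifier string, with each existential witness allowed to depend on all earlier challenges.

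At the end of the game the tuple $(\tilde{x}_0,\tilde{x}_1,x_2,\ldots,x_{n-1})$ lies in $\tilde{\mathcal{F}}_n=\tilde{\mathcal{F}}\sse\mathcal{F}$, which gives \eqref{eq.LE} for $\mathcal{F}$. I expect no genuine obstacle. The only points needing care are that the first step uses $\mathcal{F}_2=\tilde{\mathcal{F}}_2$ rather than mere inclusion, so that every starting pair of $\mathcal{F}$ lies in $\tilde{\mathcal{F}}_2$, and that membership in $\tilde{\mathcal{F}}_{i+1}$ is what the large $1$-point extension property provides, so the invariant carries through each stage.
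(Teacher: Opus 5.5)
Your proof is correct and is essentially the intended argument; the paper states this lemma without a written proof. Clauses (1) and (2) transfer directly from $\mathcal{F}_2$. For clause (3), you start with $\mathcal{F}_2=\tilde{\mathcal{F}}_2$, choose each witness $x_i>l_i$ from the current prefix via large $1$-point extensions within $\tilde{\mathcal{F}}$, and finish with $\tilde{\mathcal{F}}\sse\mathcal{F}$, which is exactly the routine unwinding of Definition~\ref{def.l2e} the authors leave implicit.
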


\section{Colorings}\label{sec:colorings}

Classically, the infinite Ramsey theorem  says that for every finite coloring of $[\mathbb{N}]^n$ there is an infinite homogeneous set  $H$, meaning that on $[H]^n$ only one color is used. Jockusch \cite{J72} showed that there is a computable finite coloring of $[\mathbb{N}]^n$ where any homogeneous $H$ set must compute $\emptyset^{(n-2)}$. We will call a largely $2$-extendable set $\mathcal{F}$ {\em monochromatic  w.r.t.\  $C$} if all the tuples in  $\mathcal{F}$ have the same color w.r.t.\ a coloring $C$.  In this section we will concretely and inductively spell out these colorings with the goal that if we have a monochromatic largely $2$-extendable set, then this set also must compute $\emptyset^{(n-2)}$.  

%To make our lives easier we denote the Jockusch coloring  $J(\bar{x})$ as $J(x_0,x_1, \ldots, x_{n-1})$. Also 
If we have a coloring $C$ of $[\mathbb{N}]^l$, where $l < n$, then we can directly extend $C$ to a coloring of $[\mathbb{N}]^n$ by defining $C(x_0,x_1, \ldots, x_{n-1})$  as $C(x_0,x_1, \ldots, x_{l-1})$.
%where $x_0< x_1 <  \ldots < x_{n-1}$. 
Let $\{K^X_s : s \in \omega \}$  be the standard  
stagewise approximations to 
$K^X=X'$, the halting set relative to the oracle $X$.  It is important to note that we are using the standard convention that $W^X_{e,s}$ can only use the oracle below $s$; i.e., $W^X_{e,s} = W^{X \upharpoonright s }_{e,s}$. The limit lemma says that if $f$ is computable in $X'$, then there is an $X$-computable sequence $\langle f_s:s<\omega\rangle$ of $X$-computable functions such that $\lim_s f_s = f$.

\begin{definition}[Jockusch's Colorings]\label{JC} Let $J^{X,3}(x,s,t) =1 $ iff $K^X_s \upharpoonright x = K^X_t \upharpoonright x$ %and, for all $e< x$ we have $$\Phi_{e,s}^{X'_s}(e) = \Phi_{e,t}^{X'_t}(e)$$ (this includes the possibly both sides diverge) 
and $0$ otherwise. This coloring is computable in $X$.  Inductively, for $i\ge 3$, assume $J^{X',i}$ is computable in $X'$. Apply the limit lemma to  $J^{X',i}$ to get the $X$-computable sequence $\langle J^{X,i}_{s}:s<\om\rangle$.  Let  $$
J^{X,i+1}(x_0, x_1, \ldots, x_{i-1}, x_{i}) = J^{X,i}_{x_{i}}(x_0, x_1, \ldots, x_{i-1}).
$$ 
Then by induction, for all $k$, $J^{X,k}$ is $X$-computable. We can use this iterative process to define an $X$-computable coloring $J^{X,3+k}$ from the $X^{(k)}$-computable coloring $J^{X^{(k)},3}$.  For $n\ge 3$, let $J^X_n = \prod_{i=3}^{n} J^{X,i}$. If $X = \emptyset$ we will drop the $X$ and just use $J_n$. 
\end{definition}
	
\begin{theorem}\label{JCLE}
	Color a family $\tilde{\mathcal{F}} \subseteq [\mathbb{N}]^n$ with the coloring $J^X_n$.  Let $\mathcal{F}$ be a monochromatic w.r.t.\ $J^X_n$ largely $2$-extendable subfamily. Then 
	\begin{enumerate}
		\item  $J^X_n$ colors all the tuples in $\mathcal{F}$ color $1$.
		\item If $(  x_0, {x}_1, x_2, \ldots, x_{n-1} ) \in \mathcal{F}$ then for all $i$, where $0 \leq i < n-2$, $K^{X^{(i)}}_{x_1} \upharpoonright x_0 = K^{X^{(i)}} \upharpoonright x_0$.  
		\item $\mathcal{F}$ computes $X^{(n-2)}$.
	\end{enumerate}
\end{theorem}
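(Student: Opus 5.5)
We argue by induction on $n\ge 3$, simultaneously for all oracles $X$, proving (1) and (2) together and then deriving (3) from (2). The key tool is the largely $2$-extendable property~\eqref{eq.LE}. It lets us fix $(\tilde x_0,\tilde x_1)\in\mathcal F_2$ and then choose the later coordinates arbitrarily large, one after another. This is exactly what is needed to pass to limits in the limit-lemma approximations that define $J^{X,i+1}$ from $J^{X',i}$.

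\emph{Base case $n=3$.} Here $J^X_3=J^{X,3}$. Fix $(x_0,x_1)\in\mathcal F_2$. By \eqref{eq.LE} there are arbitrarily large $x_2$ with $(x_0,x_1,x_2)\in\mathcal F$. For $x_2$ large enough, $K^X_{x_2}\upharpoonright x_0=K^X\upharpoonright x_0$. So the color of $(x_0,x_1,x_2)$ is $1$ exactly when $K^X_{x_1}\upharpoonright x_0=K^X\upharpoonright x_0$.

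To get color $1$, use clause (2) of Definition~\ref{def.l2e}: fix $x_0$ with infinitely many $x_1$ such that $(x_0,x_1)\in\mathcal F_2$. Choosing such an $x_1$ large enough makes $K^X_{x_1}\upharpoonright x_0$ correct, so some tuple in $\mathcal F$ has color $1$. By monochromaticity every tuple has color $1$, which gives (1). Combined with the previous paragraph, this also gives (2) for every $(x_0,x_1)\in\mathcal F_2$.

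For (3), use that $\mathcal F_1$ is infinite. To decide $e\in K^X$, search (computably in $\mathcal F$ and $X$) for $(x_0,x_1)\in\mathcal F_2$ with $x_0>e$, and answer according to $K^X_{x_1}$. The oracle $X$ is needed only because the approximations are $X$-computable. The statement ``$\mathcal F$ computes $X^{(n-2)}$'' should presumably be read as $\mathcal F\oplus X$, or the case $X=\emptyset$ as in the paper.

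\emph{Inductive step.} Assume the theorem for $n$ and all oracles, and let $\mathcal F$ be monochromatic and largely $2$-extendable for $J^X_{n+1}$. The factors $J^{X,i}$ with $i\le n$ depend only on the first $n$ coordinates. Hence $\mathcal F_n$ is monochromatic for $J^X_n$, and by Lemma~\ref{le1} (or directly from the definition) it is largely $2$-extendable. The induction hypothesis then gives the claims for those factors.

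The new factor is $J^{X,n+1}(\bar x,x_n)=J^{X,n}_{x_n}(\bar x)$, where $J^{X,n}_s$ is the limit-lemma approximation to $J^{X',n}$. For $(x_0,\dots,x_{n-1})\in\mathcal F_n$, \eqref{eq.LE} lets $x_n$ be arbitrarily large, so the common color of this factor equals $J^{X',n}(x_0,\dots,x_{n-1})$. Therefore $\mathcal F_n$ is a largely $2$-extendable family that is monochromatic for $J^{X',n}$; the other factors of $J^{X'}_n$ are handled the same way, by unwinding the iterated construction. The induction hypothesis relative to $X'$ now gives color $1$, the equality $K^{X^{(i+1)}}_{x_1}\upharpoonright x_0=K^{X^{(i+1)}}\upharpoonright x_0$, and that $\mathcal F$ computes $X^{(n-1)}$.

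\emph{Main obstacle.} The delicate point is this bookkeeping: identifying $J^{X,n+1}$ and $J^X_{n+1}$, after the limit in the last coordinate, with $J^{X',n}$ and the product coloring relative to $X'$. One needs the definition of $J^{X,i}$ via iterated limit lemmas to commute correctly with the product. The cleanest route is to prove a stronger inductive statement directly: for each single factor $J^{X,3+k}$, monochromaticity of a largely $2$-extendable family forces color $1$ and correctness of $K^{X^{(k)}}_{x_1}\upharpoonright x_0$. This is proved by induction on $k$, peeling off one coordinate at a time using the nested quantifiers in \eqref{eq.LE}.

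The last step is (3). Knowing $K^{X^{(i)}}\upharpoonright x_0$ for $i=n-3$ from $K^{X^{(i)}}_{x_1}$ still requires $X^{(n-3)}$ as an oracle. So we iterate: $\mathcal F$ computes $X'$, then $X''$, and so on, each time using the previous jump and the pairs in $\mathcal F_2$, until it reaches $X^{(n-2)}$.
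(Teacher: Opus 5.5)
Your base case is correct, and your derivation of (3) from (2) by iterating through the jumps is essentially the paper's argument. You are also right that the relativized form of (3) is really $\mathcal{F}\oplus X\geq_T X^{(n-2)}$. The gap is in the inductive step, at the sentence ``for $(x_0,\dots,x_{n-1})\in\mathcal F_n$, \eqref{eq.LE} lets $x_n$ be arbitrarily large.'' Being largely $2$-extendable does not give this. Clause (3) of Definition~\ref{def.l2e} speaks only about pairs in $\mathcal{F}_2$. It guarantees large one-point extensions for the tuples produced by playing the quantifier string $\forall l_2\exists x_2\cdots\forall l_n\exists x_n$, but it does not rule out stubs in $\mathcal{F}_k$ for $k\ge 3$. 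Suppose $\bar x\in\mathcal{F}_n$ is a stub whose extensions $x_n$ all lie below the stage where $J^{X,n}_s(\bar x)$ settles. Then the common value of the factor $J^{X,n+1}(\bar x,x_n)=J^{X,n}_{x_n}(\bar x)$ on $\mathcal{F}$ says nothing about $J^{X',n}(\bar x)=\lim_s J^{X,n}_s(\bar x)$. So you have not shown that $\mathcal{F}_n$ is monochromatic for $J^{X',n}$, nor for the ``other factors'' $J^{X',i}$ on $i$-prefixes. The induction hypothesis relative to $X'$ therefore cannot be applied to $\mathcal{F}_n$ as you do. Your ``main obstacle'' paragraph locates the difficulty in product bookkeeping; the real issue is stubs.

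The step can be repaired. Apply the induction hypothesis relative to $X'$ not to $\mathcal{F}_n$ but to the subfamily $\mathcal{G}$ of those $\bar x\in\mathcal{F}_n$ that do have arbitrarily large one-point extensions in $\mathcal{F}$. If you use the whole product $J^{X'}_n$ rather than your per-factor version, require the same of each prefix of length $i\ge 3$ with respect to $\mathcal{F}_{i+1}$.

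\begin{itemize}
\item Reading off \eqref{eq.LE} for $\mathcal{F}$ shows that $\mathcal{G}$ is largely $2$-extendable with $\mathcal{G}_2=\mathcal{F}_2$, and on $\mathcal{G}$ your limit argument is valid.
\item Since (2) concerns only pairs, its conclusion relative to $X'$ transfers back to $\mathcal{F}$.
\item $\mathcal{G}$ is not computable from $\mathcal{F}$, but this is harmless, because you obtain (3) from (2) using only $\mathcal{F}_2$.
\end{itemize}

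With this fix your per-factor induction on $k$ is a legitimate alternative to the paper's proof. The paper avoids the issue by never evaluating colors on arbitrary tuples. For (1) it builds a single color-$1$ tuple along \eqref{eq.LE}: it chooses $x_0$ with infinitely many $x_1$, then each later coordinate past the relevant settling point. For (2) it builds, from a bad pair, one tuple on which some factor is $0$.

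One further caveat: your first reduction (monochromatic for $J^X_{n+1}$ implies monochromatic for each factor) presupposes that $\prod$ is the product coloring recording every factor. If it is read as the $\{0,1\}$-valued product, a common color $0$ tells you nothing about the factors, and you would first need the paper's construction of a color-$1$ tuple.
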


\begin{proof}  By relativization it is enough to show this for $X =\emptyset$.

	 (1) It is enough to  find a tuple in $\mathcal{F}$ which is color $1$. There is an $x_0 > 0 $ with infinitely many $x_1$ where $ (x_0, x_1) \in \mathcal{F}_2$. Computably in $K^{\emptyset^{(n-3)}}=\emptyset^{(n-2)}$ we can find an $l_1$ such that for all $s > l_1$ and for all $i$, if $0 \leq i \leq n-3$, then $K^{\emptyset^{(i)}}_s \upharpoonright x_0 = K^{\emptyset^{(i)}} \upharpoonright x_0$ (it is important to note we are using the oracle $\emptyset^{(i)}$ itself and what we are approximating is $\Sigma_1$ in $\emptyset^{(i)}$). 
	 	 
	 Find $x_1 > l_1$ such that $(x_0,x_1) \in \mathcal{F}_2$.  Choose $l_2 > x_1$ and use $l_2$ and the fact that $\mathcal{F}$ is largely $2$-extendable to find $x_2$ where $ (x_0, x_1, x_2) \in \mathcal{F}_3$. Then for all $i$, where $0 \leq i < n-2$, $J^{\emptyset^{(i)},3}(x_0,x_1,x_2)=1$.  	
	 
	 Inductively assume we have $(x_0,x_1,x_2, \ldots, x_{j+2}) \in \mathcal{F}_{j+3}$ such that for all $i$, if  $0 \leq i < n-(j+2)$, then $J^{\emptyset^{(i)},j+3}(x_0,x_1,x_2, \ldots, x_{j+2})=1$.  Now there is an $l_{j+3}$ such that for all $s > l_{j+3}$, for all $i$ such that $0 < i < n-(j+2)$, we have
     $$
     J^{\emptyset^{(i-1)},j+3}_{s}(x_0, x_1, \ldots, x_{j+2}) = J^{\emptyset^{(i)},j+3}(x_0, x_1, \ldots, x_{j+2})=1.
     $$
     Use $l_{j+3}$ to find $x_{j+3}$ such that $(x_0,x_1,x_2, \ldots, x_{j+2},x_{j+3}) \in \mathcal{F}_{j+4}$. 	 We have that for all $i$ such that $0 < i < n-(j+2)$,  $$J^{\emptyset^{(i-1)},j+4}(x_0,x_1,x_2, \ldots, x_{j+2}, x_{j+3})=1.$$ 	
	 In the end, this implies that $J_n(x_0,x_1, \ldots, x_{n-1})=1$.  
	 
	 (2) Assume otherwise. Then 
     there is some $\{x_0,x_1,\tilde{x}_2,\dots,\tilde{x}_{n-1}\}\in\mathcal{F}$ 
     such that $K^{\emptyset^{(i)}}_{x_1} \upharpoonright x_0 \neq K^{\emptyset^{(i)}} \upharpoonright x_0$, for some $0<i<n-2$.  There is an $l_2$ such that for all $s > l_2$, $K^{\emptyset^{(i)}}_{x_1} \upharpoonright s \neq K^{\emptyset^{(i)}}_s \upharpoonright x_0$.  Use the property (\ref{eq.LE}) to get an $x_2$ such that $(x_0,x_1,x_2) \in \mathcal{F}_3$. Then we have that $J^{\emptyset^{(i)},3}(x_0,x_1,x_2)=0$. 
	  
	  We want to show that the coloring $J^{\emptyset,i+3}(x_0,x_1,x_2,\ldots, x_{i+2})=0$.  If $i = 0$ we are done.  Inductively assume we have $(x_0,x_1,x_2, \ldots, x_{j+2}) \in \mathcal{F}_{j+3}$ such that the coloring $J^{\emptyset^{(i-j)},j+3}(x_0,x_1,x_2, \ldots, x_{j+2})=0$. Then, by the limit lemma, there is an $l_{j+3}$ such that for all $s > l_{j+3}$, $$J^{\emptyset^{(i-(j+1))},j+3}_{s}(x_0, x_1, \ldots, x_{j+2}) = J^{\emptyset^{(i-j)},j+3}(x_0, x_1, \ldots, x_{j+2})=0.$$ Use $l_{j+3}$ to find $x_{j+3}$ such that $(x_0,x_1,x_2, \ldots, x_{j+2},x_{j+3}) \in \mathcal{F}_{j+4}$.  We have that 
     $$
     J^{\emptyset^{(i-(j+1))},j+4}(x_0,x_1,x_2, \ldots, x_{j+2}, x_{j+3})=0.
     $$ 	
     This induction implies that $J^{\emptyset,i+3}(x_0,x_1,x_2, \ldots, x_{i+2})=0$ and hence the product coloring $J_n(x_0,x_1, \ldots, x_{n-1})=0$, a contradiction to (1).  
	 
	  (3) We want to compute $\emptyset^{(n-2)} \upharpoonright l_0$ just using membership in $\mathcal{F}$.  Here we will be using $n$ times the fact that $\mathcal{F}_1$ is infinite. We will start by letting $\{ x^{n-3}_0, x^{n-3}_1, x^{n-3}_2, \ldots, x^{n-3}_{n-1} \}$ be the first tuple enumerated into $\mathcal{F}$, where $x_0 > l_0$.   Given $\{ x^{j}_0, x^{j}_1, x^{j}_2, \ldots, x^{j}_{n-1} \}$, we let  $\{ x^{j-1}_0, x^{j-1}_1, x^{j-1}_2, \ldots, x^{j-1}_{n-1} \}$ denote  the first tuple $\{ x_0, x_1, x_2, \ldots, x_{n-1} \}$  enumerated in $\mathcal{F}$, where $x_0 > x^j_{n-1}$.  
	  
	  By (2), we have that $K^{\emptyset^{(j)}}_{x^j_1} \upharpoonright x^j_0 = K^{\emptyset^{(j)}} \upharpoonright x^j_0$. Hence, computably in $\mathcal{F}$, we know $ \emptyset' \upharpoonright x^0_0 = K^{\emptyset^{(0)}} \upharpoonright x^0_0 =K^{\emptyset^{(0)}}_{x^0_1} \upharpoonright x^0_0$.   Inductively assume we can compute  ${\emptyset^{(j)}} \upharpoonright x^{j-1}_0$. We have that $${\emptyset^{(j+1)}} \upharpoonright x^{j}_0 = K^{\emptyset^{(j)}} \upharpoonright x^{j}_0 =K^{\emptyset^{(j)}}_{x^{j}_1} \upharpoonright x^{j}_0 = K^{\emptyset^{(j)}\upharpoonright x^{j}_1}_{x^{j}_1} \upharpoonright x^{j}_0 = K^{\emptyset^{(j)}\upharpoonright x^{j-1}_0}_{x^{j}_1} \upharpoonright x^{j}_0.$$ The last equality holds since $x^{j-1}_0 > x^j_{n-1} > x^j_1$. So, computably in $\mathcal{F}$, we know $ \emptyset^{(n-2)} \upharpoonright x^{n-3}_0$ and $x^{(n-3)}_0 > l_0$.  
\end{proof}

\begin{corollary}[Jockusch]\label{joc72}
	If we color $[\mathbb{N}]^n$ by $J^X_n$ then a homogeneous set computes $X^{(n-2)}$. So ``Ramsey Theory for $n$-tuples'' codes $\emptyset^{(n-2)}$. 
\end{corollary}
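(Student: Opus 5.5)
The plan is to reduce Corollary~\ref{joc72} directly to Theorem~\ref{JCLE}. Let $H$ be an infinite homogeneous set for the coloring $J^X_n$ of $[\mathbb{N}]^n$, and take $\mathcal{F} = [H]^n$. We then check that $\mathcal{F}$ meets the hypotheses of Theorem~\ref{JCLE} with $\tilde{\mathcal{F}} = [\mathbb{N}]^n$. Monochromaticity is immediate, since $H$ is homogeneous. For largely $2$-extendable we use the remark after Definition~\ref{def.l2e}: $[X]^n$ is largely $2$-extendable for any infinite $X$. Concretely, $\mathcal{F}_1 = H$ is infinite (more precisely, all elements of $H$ except the last $n-1$, which is still infinite). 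Any $x_0 \in H$ pairs with infinitely many larger $x_1 \in H$. Given $(\tilde{x}_0,\tilde{x}_1,\ldots) \in \mathcal{F}$ and bounds $l_2, l_3, \ldots$, we pick successively the least element of $H$ above $\max(l_j, x_{j-1})$. All of this holds because $H$ is infinite.

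Theorem~\ref{JCLE}(3) then gives that $\mathcal{F}$ computes $X^{(n-2)}$. It remains to see that $H$ computes $\mathcal{F}$. This holds because membership of an $n$-tuple in $[H]^n$ is decided by checking each coordinate against $H$, and the enumeration of $\mathcal{F}$ used in the proof of Theorem~\ref{JCLE}(3) (``the first tuple enumerated with $x_0 > \ldots$'') can be carried out effectively from $H$. Hence $X^{(n-2)} \leq_T H$.

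For the coding statement, we unpack the definition of ``codes $\emptyset^{(n-2)}$''. Given any $X$, take $C = J^X_n$, which is $X$-computable by Definition~\ref{JC}. Then every $H$ satisfying the Ramsey-for-$n$-tuples matrix for $C$ computes $X^{(n-2)}$, by the previous paragraph. The case $n<3$ is trivial: $X^{(0)} = X$ is computable from $C$ only in the weak sense that nothing beyond $X^{(n-2)}$ is needed. So we either restrict attention to $n\geq 3$, as Definition~\ref{JC} does, or note that for $n=2$ the claim asks only that $H$ compute $X$, and handle this by the usual convention that $C \leq_T X$ suffices. There is no real obstacle here. The only point needing care is verifying clause (3) of Definition~\ref{def.l2e} for $[H]^n$ and checking that $H$ uniformly computes membership in $[H]^n$; both are routine.
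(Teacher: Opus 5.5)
Your proof is correct and is essentially the paper's own argument: the paper notes (after Definition~\ref{def.l2e}, and again at the start of ``A failed improvement'') that $[H]^n$ is largely $2$-extendable for any infinite $H$. Since $[H]^n$ is $J^X_n$-monochromatic when $H$ is homogeneous, and is computable from $H$, Theorem~\ref{JCLE}(3) yields the corollary. A few minor cleanups:
\begin{enumerate}
\item $\mathcal{F}_1$ is all of $H$, since an infinite $H$ has no ``last'' elements.
\item Replace the $n<3$ paragraph with the standing assumption $n\ge 3$. For $n=2$ the product defining $J^X_n$ is empty, so $\mathbb{N}$ is homogeneous, and no convention about $C\le_T X$ makes $H$ compute $X$.
\item Strictly, the relativized Theorem~\ref{JCLE}(3) gives $X^{(n-2)}\le_T H\oplus X$ rather than $X^{(n-2)}\le_T H$, because the approximations $K^X_s$ query $X$. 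This looseness is inherited from the paper's phrasing.
\end{enumerate}
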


\subsection{A failed improvement}

For a homogeneous set $H$ w.r.t.\ a coloring $C$ of $[\mathbb{N}]^n$, the set $[H]^n$ is largely $2$-extendable. What about the converse?   There are colorings of $[\mathbb{N}]^n$ without any computable homogeneous sets (for $n=2$, see \cite{MR278941}, and for $n>2$, see the above Jockusch colorings). For $n=2$, by chasing the definition of largely $2$-extendable, we can see that every finite coloring of $[\mathbb{N}]^n$ has (nonuniformly) a computable monochromatic largely $2$-extendable set. Let $n>2$. Gavin Dooley showed, for any finite coloring $C$ of $[\mathbb{N}]^n$ it is possible to find a new coloring $\tilde{C}$ where $\tilde{C}$ has computable monochromatic largely $2$-extendable sets but any homogeneous set w.r.t.\ $\tilde{C}$ computes a homogeneous set w.r.t.\ ${C}$.  Consider a finite coloring $C$ of $[\mathbb{N}]^n$ without any computable monochromatic largely $2$-extendable sets. It is open if a monochromatic largely $2$-extendable set w.r.t.\ $C$ must always compute a homogeneous set  w.r.t.\ $C$.  We make this question very concrete for a particular coloring below in Question~\ref{cc3}.

Now work with $n=3$.  Hirschfeldt and Jockusch \cite{MR3518779} showed there is a finite coloring of $[\mathbb{N}]^3$ such that every homogeneous set is PA over $0'$. The proof of this extension fails if  we consider largely $2$-extendable sets.  

It is worth going into the details of the Hirschfeldt and Jockusch result to partially understand the difference between monochromatic largely $2$-extendable sets and homogeneous sets. For this we need to assume our functionals are $2$-valued. 

\begin{definition}[Hirschfeldt and Jockusch Colorings]
  We use the coloring $J_3$ from Definition~\ref{JC}. We say that $H_3(x,s,t) =1 $ iff for all $e< x$, we have $$\Phi_{e,s}^{X'_s}(e) = \Phi_{e,t}^{X'_t}(e)$$ (this includes the possibility that  both sides diverge) 
and $0$ otherwise. 
\end{definition}

\begin{corollary}[Hirschfeldt and Jockusch]
	If we color $[\mathbb{N}]^3$ by $J_3\times H_3$ then a homogeneous set $H$ is PA over $\emptyset'$. 
\end{corollary}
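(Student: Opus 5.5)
The plan is to show that a homogeneous set $H$ for $J_3\times H_3$ computes a $\{0,1\}$-valued function $f$ that is diagonally noncomputable relative to $\emptyset'$. That is, $f(e)\neq\Phi_e^{\emptyset'}(e)$ whenever the right side converges. This is equivalent to $H$ being PA over $\emptyset'$. Throughout, $X=\emptyset$, the functionals $\Phi_e$ are $2$-valued, and $\emptyset'_s$ denotes $K_s$.

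\emph{Step 1: $H$ has color $1$ in the $H_3$ coordinate.} The $J_3$ coordinate is $1$ by Theorem~\ref{JCLE}(1), since $[H]^3$ is largely $2$-extendable; it is not actually needed below. For $H_3$, suppose toward a contradiction that $H_3$ is constantly $0$ on $[H]^3$. Fix $x\in H$ with $x\geq 1$, and take a long increasing sequence $s_0<s_1<\dots<s_m$ of elements of $H$ above $x$.

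Color each pair $\{s_i,s_j\}$ by the least $e<x$ with $\Phi_{e,s_i}^{\emptyset'_{s_i}}(e)\neq\Phi_{e,s_j}^{\emptyset'_{s_j}}(e)$. Such an $e$ exists because $H_3(x,s_i,s_j)=0$. This is a coloring of pairs with $x$ colors. By the finite Ramsey theorem, taking $m$ large enough gives four stages that are pairwise colored by the same $e$. The four values $\Phi_{e,s}^{\emptyset'_s}(e)$ at these stages would then be pairwise distinct elements of $\{0,1,\uparrow\}$, which is impossible. So $H_3\equiv 1$ on $[H]^3$.

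\emph{Step 2: stabilization gives the true value.} By Step 1, for $x\in H$ and $e<x$, the value $v_e=\Phi_{e,s}^{\emptyset'_s}(e)$ is the same for every $s\in H$ with $s>x$. Suppose $\Phi_e^{\emptyset'}(e)\converge$ with use $u$. For all sufficiently large $s$, the approximation $\emptyset'_s$ agrees with $\emptyset'$ below $u$ and the computation has converged by stage $s$, so $\Phi_{e,s}^{\emptyset'_s}(e)=\Phi_e^{\emptyset'}(e)$. Since $H$ is infinite, such $s$ exist in $H$ above $x$, and therefore $v_e=\Phi_e^{\emptyset'}(e)$.

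\emph{Step 3: compute $f$ from $H$.} Given $e$, search $H$ for some $x\in H$ with $x>e$ and the next element $s\in H$ after $x$, and compute the bounded-stage value $v=\Phi_{e,s}^{\emptyset'_s}(e)$. This is a computable check, since stage-$s$ computations are decidable. Set $f(e)=1-v$ if $v\in\{0,1\}$, and $f(e)=0$ otherwise. By Step 2, whenever $\Phi_e^{\emptyset'}(e)\converge$ we have $f(e)\neq\Phi_e^{\emptyset'}(e)$. Hence $f\leq_T H$ is a $\{0,1\}$-valued DNC function relative to $\emptyset'$, and $H$ is PA over $\emptyset'$.

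The main obstacle is Step 1, ruling out homogeneous color $0$. Homogeneity is used there through the finite-Ramsey and pigeonhole argument. This is exactly where the $2$-valuedness of the functionals matters, since only three possible values $0,1,\uparrow$ are available. It is also where a merely largely $2$-extendable family would fail, because such a family does not provide all pairs among many stages above a fixed $x$.
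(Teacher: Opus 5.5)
Your proof is correct and follows the paper's argument. You first use the finiteness of the value set $\{0,1,\uparrow\}$ to show that $H$ has color $1$ for $H_3$. Then, for each $e$, you read off the stabilized value $\Phi^{\emptyset'_s}_{e,s}(e)$ at a stage $s\in H$ above some $x>e$. You output its complement as a $\{0,1\}$-valued DNC function, whereas the paper outputs the value itself as a completion; these are trivially interchangeable.

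The only difference is in how color $0$ is excluded. You apply finite Ramsey to the coloring by least disagreeing $e$, while the paper pigeonholes the $3^{x}$ possible vectors $(\Phi^{\emptyset'_s}_{e,s}(e))_{e<x}$ over infinitely many $s\in H$. That directly gives two stages, and hence a triple, of color $1$, which is the more economical step.
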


\begin{proof}
	By Theorem~\ref{JCLE} we know for all tuples $\vec{x} \in [H]^3$, $J_3(\vec{x}) =1 $. So it is enough to show, (I) there is a tuple, $\vec{x}$, where $H_3(\vec{x})=1$ and then use that to show (II) $H$ computes a completion of $\Phi_e^{\emptyset'}(e)$. 
	
	(I) The first part mirrors the first two paragraphs of the proof of (1) of Theorem~\ref{JCLE}.  Pick $x_0 \in H$.  Then there is an $l$ such that, for all stages $s >l$ and for all $e<x_0$, if $\Phi_{e}^{X'}(e)\converge$ then $\Phi_{e,s}^{X'_s}(e) =\Phi_{e}^{X'}(e)$. 
	
	This next part is very different.  Since there are only finitely many partial functions from $x_0$ to $\{0,1,\uparrow\}$, we can find an infinite set $Z$ of $x_1>s$ in $ H$ such that 
     the value of the partial function $\Phi_{e,{x_1}}^{X'_{x_1}}(e)$ agrees for all $x_1\in Z$. Let $x_1, x_2$ be in $Z$ such that $x_1 < x_2$.  Then $(x_0,x_1,x_2) \in [H]^3$ and $H_3(x_0,x_1,x_2)=1$.  
	
	(II)  We compute our claimed completion $f$ as follows: for $e$ and $x_0,x_1 \in H$ where $e < x_0$ let $f(e) = \Phi_{e,{x_1}}^{X'_{x_1}}(e)$.  By (I), $f$ is well defined.   We claim that if $\Phi_{e}^{X'}(e)\converge$ then $\Phi_{e}^{X'}(e)=f(e)$. If not, then there is an $l$ such that, for all stages $s >l$, $\Phi_{e,s}^{X'_s}(e) \neq \Phi_{e}^{X'}(e)$. Pick ${x}_2$ such that ${x}_2 \in H$ and ${x}_2 > s$. Then $H_3(x_0,x_1,x_2)=0$. Contradiction.	
\end{proof}

Now let's consider using a monochromatic largely $2$-extendable set $\mathcal{F}$ rather than a homogeneous set $H$ for the proof of (I).  First choose $x_0$ as done in the proof of Theorem~\ref{JCLE} part (1). The set $Z$ in the second part of the proof of (I) exists, even if we refine it such that $(x_0,x_1) \in \mathcal{F}_2$.   But there is no reason to believe for all (any) pairs $x_1,x_2 \in Z$, the tuple $(x_0,x_1,x_2)$ is in $\mathcal{F}_3$.  Later we will see an example where for all pairs $x_0,x_1 \in Z$, the sets $\{ x_3 | (x_0,x_1,x_3) \in \mathcal{F}_3 \}$ are pairwise disjoint. We are left with the following questions:

\begin{question}\label{cc3}
	If we color $[\mathbb{N}]^3$ by $J_3\times H_3$ then does every monochromatic largely $2$-extendable set $\mathcal{F}$ compute a homogeneous set or even just completion of $\Phi_e^{\emptyset'}(e)$?
\end{question}

Jockusch \cite{J72} has a large number of results about coloring of tuples and the resulting homogeneous sets.  What can be said about monochromatic largely $2$-extendable sets rather than homogeneous sets?  Also we can take those colorings into different settings and see what happens.  We will do this partially below.  There are also some possible extensions to largely $k$-extendable.   We are are working to complete this task.

\section{An introduction to $\mathbb{H}_{n+1}$} \label{H3}

We will be working with countable graphs $(V,E)$.  $K_{n}$ is the clique with $n$ vertices or nodes, so  there is an edge between every pair of vertices;  we also say that all pairs of vertices in $K_n$ are connected. $\overline{K}_{n}$ is the anti-clique with $n$ nodes, so every pair of nodes is not connected. 
In particular, 
$K_2$ is a pair of connected nodes, $\overline{K}_2$ is a pair of disconnected nodes, and $K_3$ is a triangle, 3 nodes with all pairs connected.  A $K_{n+1}$-free graph is a graph which does not contain a copy of $K_{n+1}$.  Our focus will be the countable $K_{n+1}$-free graphs $\mathbb{H}_{n+1}$, $n\ge 2$,  with the following extension property:

\begin{definition}[Extension Property]
A $K_{n+1}$-free graph $G= (V,E)$ has the \emph{extension property} if for all finite disjoint subsets of nodes $D$ and $F$ of $V$, if there is no copy of $K_{n}$ in the subgraph induced by $E$ on $D$ then there is a node $v$ in $V$ such that for each $\tilde{v} \in D$, $v$ and $\tilde{v}$ are connected (we say $v$ and $D$ are connected) and  for each $\tilde{v} \in F$, $v$ and $\tilde{v}$ are disconnected (we say $v$ and $F$ are disconnected or not connected).
\end{definition}

 The extension property implies that $\mathbb{H}_{n+1}$ is not finite.  Moreover, it is a defining property of $\mathbb{H}_{n+1}$, for the extension property holds in a $K_{n+1}$-free graph if and only if that graph is the \Fraisse\ limit of the \Fraisse\ class of all finite $K_{n+1}$-free graphs.
 
 It is a straightforward construction to build a computable copy of $\mathbb{H}_{n+1}$.  By the standard back and forth argument any pair of (computable) copies $\mathbb{H}_{n+1}$ are (computably) isomorphic.   Since $\mathbb{H}_{n+1}$ is countable we can fix an enumeration of $\mathbb{H}_{n+1}$.    So $\mathbb{H}_{n+1} = (\{\tilde{v}_i | i \in \omega\}, E)$. We will always be working inside this graph and will assume that our edges are given by $E$.

 Now we will want an enumeration, $\{\hat{v}_i | i \in \omega\}$,  with the special property that  $\hat{v}_{i+1}$ and $\hat{v}_{i}$ are connected, for each $i\in \omega$.
We  re-order the vertices as follows.
 Pick some $\hat{v}_0$.   At stage $s+1$, let $i$ be the least such that $\tilde{v}_i$ does not  appear among the nodes, $\hat{v}_0, \hat{v}_1, \ldots ,\hat{v}_s$,
 and $\tilde{v}_i$  is  connected to $\hat{v}_s$; let $\hat{v}_{s+1}=\tilde{v}_i$.
 Such a node exists, by the extension property. 
Since each node $\tilde{v}_i$ is connected to infinitely many nodes in $\mathbb{H}_{n+1}$, 
each $\tilde{v}_i$ eventually appears as some $\hat{v}_s$.
% If $\tilde{v}_i$ and $\hat{v}_s$ are connected let $\hat{v}_{s+1} = \tilde{v}_i$. Otherwise, via the extension property, there is a node $\tilde{v}$ such that $\tilde{v}$ and $\tilde{v}_i$ are connected, $\tilde{v}$ and $\hat{v}_{s}$ are connected, and $\tilde{v}$ is not connected to any of $\hat{v}_0, \hat{v}_1, \ldots ,\hat{v}_{s-1}$. Let $\hat{v}_{s+1} = \tilde{v}$ and $\hat{v}_{s+2} = \tilde{v}_i$.  \pc{Check above here to end of paragraph} 
%Please note that $\{\hat{v}_i | i \in \omega\}$ is just a reordering of the nodes $\{\tilde{v}_i | i \in \omega\}$. 
So the graphs $(\{\hat{v}_i | i \in \omega\},E)$ and $(\{\tilde{v}_i | i \in \omega\}, E)$ are isomorphic, but the identity map is not necessarily an isomorphism between our two copies of $\mathbb{H}_{n+1}$.

We will set $\mathbb{H}_{n+1} = (\{\hat{v}_i | i \in \omega\}, E)$ and call this our {\em standard enumeration} or {\em standard copy}. (We also have no use for the first enumeration $\{\tilde{v}_i | i \in \omega\}$ and will reuse the tildes.)
 %The satisfaction of the first bullet point implies that the extension property holds for $\mathbb{H}_{n+1}$.
 From now on we will refer to nodes (sets of nodes) by their index (sets of indices) in the standard enumeration.   From now on when we construct a copy of $\mathbb{H}_{n+1}$, our convention will be that we are working within a subcopy of $\mathbb{H}_{n+1}$ contained in our standard copy. So the edges are induced by $E$.

\subsection{The Greedy Algorithm}\label{greedy}
 
 When we construct subcopies of $\mathbb{H}_{n+1}$, we will also stagewise copy our standard enumeration $\{ \hat{v}_i | i \in \omega \}$.  So we will select nodes ${v}_i$ to ensure that at stage $s$ the nodes $\hat{v}_0, \ldots, \hat{v}_s$ and $v_0, \ldots, v_s$ are isomorphic as finite graphs (with edges given by $E$). The greedy computable stagewise construction of a subcopy of $\mathbb{H}_{n+1}$ goes as follows: at stage $s+1$ select a node ${v}_{s+1}$ such that, for each $i\le s$, ${v}_{s+1}$ is connected to ${v}_i$ iff ${\hat{v}}_{s+1}$ is connected to ${\hat{v}}_i$.  Also we will always want the index for ${v}_{s+1}$ to be larger than any of the indices for earlier enumerated nodes. Sometimes we will also add a threshold $t_s$ which  ${v}_{s+1}$ must be larger than.  If we do this construction within another subcopy  of $\mathbb{H}_{n+1}$, finding such a node is not a problem.  But sometimes we will add some additional properties on the selection of $v_{s+1}$ which can be more problematic.

 The greedy algorithm also applies  to building copies of any finite $G = \{ {v_i | i < g}\}$ which is $K_{n+1}$ free within a copy of $\mathbb{H}_{n+1}$.  Two enumerations, $\{ {v_i | i < g}\}$ and $\{ {\tilde{v}_i | i < g}\}$, of $G$ are {\em order-isomorphic} iff the
 function $v_i\mapsto \tilde{v}_i$ 
 induces an automorphism of $G$. Many graphs $G$ have many non-order-isomorphic enumerations. When the greedy algorithm is applied  to an enumeration of $G$, the resulting enumeration (from the greedy algorithm) is order-isomorphic to the original.

 \subsection{Neighbor Sets}\label{Neighbor}

  Two nodes are {\em neighbors} iff they are connected.    
 \begin{definition}
   The \emph{neighborhood set} of $\hat{v}_n$ (or $m$), $N(\hat{v}_n)=N_n$,  is the set of nodes $v$ such that $v$ is  connected to $\hat{v}_n$ and, for all $k < n$, $v$ is not connected to  $\hat{v}_k$. 
 \end{definition}

(We will abuse notation and use $n$ for indexing of neighbor sets within a fixed $\mathbb{H}_{n+1}$.)  So our neighborhood sets are disjoint. Inside $\mathbb{H}_{3}$,  two neighbors of a given node cannot be connected. Inside $\mathbb{H}_{n+1}$, a neighborhood  set cannot contain a copy of $K_n$.  %But, by the above remark, it does contain a copy of $\mathbb{H}_n$ and later we will consider neighborhood sets within that copy.   
 Each node $\hat{v}_j$ is in  some neighborhood set $N_n$ where $n<j$ (for $j=0$ we let $n=0$).  At times, we will label our nodes as $\hat{v}_j = \hat{v}_{n,j}$, where $\hat{v}_j \in N_n$. This labeling is a structural property induced by our standard enumeration.

\section{Indivisibility and Properties of subcopies of $\mathbb{H}_{n+1}$} \label{indi}

From now on, we will work with a subcopy $\tilde{\mathbb{H}}$ within our standard copy of $\mathbb{H}_{n+1}$.  The indexing and neighborhood sets are determined by our standard copy.  %We need the following theorems. 

%{\color{red}  (ND:  I should go back and reference the ``standard enumeration" for $n\ge 3$ given in the Appendix.)}

\subsection{Indivisibility} % and the Mycielski Graph

The following theorem is due to Komjath and R\"{o}dl \cite{Komjath/Rodl86} for $n=2$, and due to
 El-Zahar and Sauer \cite{El-Zahar/Sauer89} for all $n\ge 3$. Indivisibility means that nodes have the Ramsey Property in $\mathbb{H}_{n+1}$.

\begin{theorem}[Indivisibility]\label{Indivisibility}
	For each  $n\ge 2$, for  each finite $k$ and every $k$-coloring of the nodes in $\mathbb{H}_{n+1}$ there is a homogeneous copy of $\mathbb{H}_{n+1}$ (a copy where all the nodes have the same color).
\end{theorem}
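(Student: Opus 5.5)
We would prove indivisibility by induction on $n$, with the structure of neighborhood sets (Section~\ref{Neighbor}) supplying the local information. The starting observation is that for a node $\hat v_m$, its neighbors in $\mathbb{H}_{n+1}$ induce a $K_n$-free graph. Moreover, the extension property relativized to these neighbors makes the neighborhood $\{v : v E \hat v_m\}$ (and, with more care, the sets $N_m$ together with the constraints from earlier nodes) contain a copy of $\mathbb{H}_n$.

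Fix a $k$-coloring $c$ of the nodes; by iterating, it suffices to treat $k=2$, say red and blue. Following the forcing style announced in the introduction, we build a copy of $\mathbb{H}_{n+1}$ by the greedy algorithm of Section~\ref{greedy}. At stage $s+1$ we must choose $v_{s+1}$ realizing a prescribed type over $v_0,\dots,v_s$: connected to a set $D\subseteq\{v_0,\dots,v_s\}$ with no $K_n$, and disconnected from the rest $F$. The set $W(D,F)$ of all such witnesses in $\mathbb{H}_{n+1}$ is infinite. More strongly, by the extension property it contains a copy of $\mathbb{H}_{n+1}$ whenever $D=\emptyset$, and it contains a copy of the "$D$-neighborhood" structure otherwise.

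The key lemma is a dichotomy. Either (a) there is a finite configuration $\bar v$ (a condition) such that red meets every witness set $W(D,F)$ with $D,F$ built from any extension of $\bar v$ inside a suitable subgraph, or (b) blue does this. We prove it by contradiction. If neither holds, then both red and blue have "bad" finite configurations, each having some type whose witnesses in the relevant region are entirely of the other color. We amalgamate these two bad configurations, placed disjointly and non-adjacently, which is possible by the extension property because $K_{n+1}$-freeness is preserved under free amalgamation. In the amalgam we locate a single type whose witness set must be simultaneously all red and all blue. To get that type to be nonempty we use the induction hypothesis, indivisibility of $\mathbb{H}_n$, inside the neighborhood of a vertex. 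For $n=2$ the base case is that the neighborhood is an anticlique, where pigeonhole on an infinite independent set suffices.

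Once the dichotomy gives, say, red meeting all witness sets densely below a condition $\bar v$, the greedy construction succeeds. At each stage we pick a red $v_{s+1}$ in the required witness set above a threshold, and we maintain inductively that the configuration built so far still extends $\bar v$ in the dense sense. This yields a red copy of $\mathbb{H}_{n+1}$.

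The main obstacle is the amalgamation step in the dichotomy: showing that the failure of density for both colors yields a contradiction. Here witness sets with large $D$ live inside neighborhoods of $D$, which are only $K_{n+1-|D|}$-free, so the induction must be on the clique number of the region rather than simply on $n$. We would first try to formulate the density notion as "for every finite $D,F$ in the region, $c^{-1}(\text{red})\cap W(D,F)\neq\emptyset$" and run El-Zahar–Sauer's argument: choose a minimal bad pair and use the extension property to build a vertex adjacent to exactly the needed parts of both configurations.
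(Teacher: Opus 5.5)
There is a genuine gap, and it sits exactly at the step you flag as the main obstacle. Once ``suitable subgraph'' is made precise, your dichotomy lemma is essentially a restatement of the theorem, since a monochromatic copy is such a subgraph. So everything rests on the amalgamation argument, and that argument does not go through.

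A bad configuration $(D_R,F_R)$ is a particular finite set of vertices, and its badness depends on the fixed coloring. The extension property produces \emph{new} vertices realizing types, but it cannot relocate $F_R$ and $F_B$ so that they are disjoint and non-adjacent. In general no such pair exists. For example, in $\mathbb{H}_3$ color a node red iff it lies in some $N_i$ with $i$ even. Suppose $D_R$ is independent and $W(D_R,F_R)$ is entirely blue. Then $\min D_R$ is odd, and every even-indexed $\hat v_l$ below $\min D_R$ is in $F_R$ or adjacent to $D_R$. Otherwise the extension property gives a node of some even $N_l$ inside $W(D_R,F_R)$. The dual statement holds for blue. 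Comparing $\min D_R$ with $\min D_B$ then shows that the two configurations always intersect or are joined by an edge. Meanwhile neither color realizes every type: $N_1=W(\{\hat v_1\},\{\hat v_0,\hat v_1\})$ is blue and $N_0$ is red.

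Two further problems:
\begin{itemize}
\item Nonemptiness of the amalgamated type is automatic from the extension property once $D_R\cup D_B$ is $K_n$-free. So the induction hypothesis is not what is needed at that point.
\item Indivisibility of $\mathbb{H}_n$ inside one neighborhood does not assemble a copy of $\mathbb{H}_{n+1}$. Each vertex of the copy needs its own monochromatic region of neighbors, with prescribed adjacencies to all later vertices. Already the passage from the anticlique (pigeonhole) to $\mathbb{H}_3$ needs more.
\end{itemize}

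The missing idea, in the case $n=2$ (the only case the paper proves; $n\ge 3$ is cited to El-Zahar and Sauer), is to make the dichotomy local to each neighborhood set and then combine infinitely many local witnesses.
\begin{itemize}
\item Color $N_i$ RED if, for every independent $D$ with $\min D=i$ and every finite $F$ disjoint from $D$, some red node is adjacent to all of $D$ and to none of $F$. In that case put $D_i=\{i\}$, $d_i=i$.
\item Otherwise fix a failing pair $(D_i,F_i)$, so that every node adjacent to all of $D_i$ and to nothing in $F_i$ is blue, and put $d_i=\max(D_i\sqcup F_i)$.
\item Either way, every type ``adjacent to $D_i\sqcup D$, non-adjacent to the rest of $\{0,\dots,d\}$'' is realized in the color of $N_i$, whenever $D\subseteq(d_i,d]$ and $D_i\sqcup D$ is independent.
\end{itemize}
Pigeonhole is then applied to neighborhood sets, not to vertices, giving infinitely many $N_i$ of one color $C$. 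The copy is built greedily. The $k$-th vertex gets a fresh $C$-colored $N_{i_k}$ with $i_k>j_{k-1}$, and its own index satisfies $j_k>d_{i_k}$. Each later vertex whose template $\hat v_l$ is an exclusive neighbor of $\hat v_k$ is placed in $N_{D_{i_k},d_{i_k}}$. Triangle-freeness is what guarantees that $D_{i_k}\sqcup D$ stays independent.

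For $n\ge 3$ the neighborhoods are only $K_n$-free, and this scheme needs a genuinely stronger inductive statement. That statement is the content of El-Zahar and Sauer's argument, which your final sentence invokes rather than supplies.
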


 Indivisibility will be a useful tool. We will provide a proof for $n=2$ in Section~\ref{proof:Indivisibility}. 

% \begin{theorem}[Mycielski Graph] For all $l$, there is a finite triangle free graph $G_l$ with chromatic number $l+1$.  Moreover $G_l$ can be found effectively and uniformly from $l$. 
% \end{theorem}

% The Mycielski Graph can be found in standard graph theory textbooks.  By the extension property we can always find a copy of $G_l$ within $\tilde{\mathbb{H}}$.  We will use the Mycielski Graph to prove a more effective and local version of indivisibility.  

\subsection{A Critical Theorem}

The following theorem will prove very useful.  
While these facts have been known for a long time, 
this is the first time we are aware of where they are  isolated in a theorem.

\begin{theorem}[Folklore]\label{local1}
\begin{enumerate} 
    \item $\tilde{\mathbb{H}}$
    is not contained
     in  finitely many neighborhood sets.
	\item For each $l_0$, we can always find a node $v_{n,j}$ in $\tilde{\mathbb{H}}$, where $l_0 < n < j $.
	\item 
    For each $l_0$, there is an $n > l_0$ so that  for all $l_1$, there is $j > l_1$ where the node $v_{n,j}$ is in $\tilde{\mathbb{H}}$.
	\item For each $l_0$, there is a node $v_n \in \tilde{\mathbb{H}}$ and infinitely many nodes $v_{n_i,j_i} \in \tilde{\mathbb{H}}$, where $l_0 < n_i \leq n$,  such that $v_{n_i,j_i}$ and $v_n$ are connected. 
\end{enumerate}
\end{theorem}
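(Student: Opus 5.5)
The plan is to derive all four parts from two ingredients: indivisibility (Theorem~\ref{Indivisibility}) applied to $\tilde{\mathbb{H}}$ itself, and two structural facts about neighborhood sets in the standard enumeration.

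The structural facts are these. First, every node $\hat{v}_j$ with $j>0$ lies in $N_m$, where $m$ is the least index of a neighbor of $\hat{v}_j$. Since $\hat{v}_{j-1}$ is a neighbor, $m<j$; so in the notation $v_{n,j}$ we always have $n<j$. Second, each $N_m$ is $K_n$-free, because all its nodes are connected to $\hat{v}_m$ and the whole graph is $K_{n+1}$-free. By contrast, any copy of $\mathbb{H}_{n+1}$ contains a $K_n$, by repeated use of the extension property. The standing convention that $\hat{v}_0$ belongs to $N_0$ handles the one node not covered by the first fact.

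\emph{Part (1).} Suppose $\tilde{\mathbb{H}}\subseteq N_0\cup\dots\cup N_m$. Color each node of $\tilde{\mathbb{H}}$ by the unique $i\le m$ with the node in $N_i$. This is a finite coloring of a copy of $\mathbb{H}_{n+1}$, so Theorem~\ref{Indivisibility} gives a copy of $\mathbb{H}_{n+1}$ inside a single $N_i$. That copy contains a $K_n$, while $N_i$ is $K_n$-free, a contradiction.

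\emph{Part (2).} This follows from (1) together with the first structural fact. By (1), $\tilde{\mathbb{H}}$ meets some $N_n$ with $n>l_0$, and any such node is $v_{n,j}$ with $j>n$ automatically.

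\emph{Parts (3) and (4).} These need a node with infinitely many neighbors inside a bounded window of neighborhood sets above $l_0$.
\begin{itemize}
\item Split $\tilde{\mathbb{H}}$ into $A=\tilde{\mathbb{H}}\cap\bigcup_{m\le l_0}N_m$ and $B=\tilde{\mathbb{H}}\setminus A$.
\item Apply Theorem~\ref{Indivisibility} to this 2-coloring of $\tilde{\mathbb{H}}$. By (1), the homogeneous copy $\mathbb{H}'$ cannot lie in $A$, so $\mathbb{H}'\subseteq B$.
\item Pick any node $w=\hat{v}_n\in\mathbb{H}'$. By the extension property in $\mathbb{H}'$, it has infinitely many neighbors $u\in\mathbb{H}'$, and we may take them with index larger than $n$.
\item Each such $u$ lies in some $N_{m}$ where $m$ is the least index of a neighbor of $u$. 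Since $w$ is a neighbor, $m\le n$. Since $u\in B$, $m>l_0$.
\end{itemize}
This gives (4) directly, with $v_n=w$ and the $u$'s as the $v_{n_i,j_i}$ with $l_0<n_i\le n$. For (3), the infinitely many $u$'s are distributed among the finitely many sets $N_m$ with $l_0<m\le n$. By pigeonhole, one such $N_m$ contains infinitely many nodes of $\tilde{\mathbb{H}}$, and hence ones of arbitrarily large index $j$.

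The main obstacle is (3)/(4). A naive argument only shows that $\tilde{\mathbb{H}}$ meets infinitely many neighborhood sets, each perhaps finitely. The key step is the second application of indivisibility to move entirely above $l_0$. Combined with the observation that all neighbors of a fixed node $\hat{v}_n$ with larger index sit in $N_m$ for $m\le n$, this turns "infinitely many neighbors" into "infinitely many nodes in one neighborhood set".
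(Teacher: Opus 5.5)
Your proposal is correct and follows the paper's proof essentially step for step. Part (1) uses indivisibility applied to the neighborhood-set coloring together with $K_n$-freeness of each $N_i$, and (2) follows from (1); (4) uses the 2-coloring split at $l_0$, with (1) forcing the homogeneous copy above $l_0$ and the neighborhood indices of a fixed node's later neighbors bounded by that node's index, and (3) then follows by pigeonhole. The only cosmetic differences are that you take an arbitrary node of the homogeneous copy and its higher-indexed neighbors where the paper takes the least node, and that you spell out the fact $n<j$, which the paper leaves implicit.
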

	
(2) Follows directly from (1).  (3) and (4) do not. Note that (2), (3), and (4)  each imply (1).

\begin{proof}  (1)  Assume $\tilde{\mathbb{H}}$ lives in $l_0$ neighborhood sets. $l_0$-color the nodes in $\tilde{\mathbb{H}}$ by membership in the various neighborhood sets. By indivisibility there is a copy of  $\mathbb{H}_{n+1}$ living in some neighborhood set.  But no neighborhood set can contain a copy of $K_n$. Contradiction.

  (4): $2$-color $\mathbb{H}_{n+1}$ via $C(v_{m,j}) = BLUE$ iff $m > l_0$. By indivisibility and (1), there must be a blue copy of $\mathbb{H}_{n+1}$.  Let $v_{m}$ be the least node in this blue copy.  There are infinitely many  nodes $v_{n_i,j_i}$ in this blue copy connected to $v_m$. We have for all $i>0$, $l_0 < n_i < m$. 
  
  (3) Hence there must be a least $\tilde{n}$ such that $l_0 < \tilde{n} \leq n$ and there are infinitely many $i$ where the nodes $v_{\tilde{n},j_i}$ are in our blue copy and hence the original copy. 
  \end{proof}

(2) is effective. Since we know such a node always exists we can search for it. But in (3) we do not find $n$ effectively, see Lemma~\ref{notcomputable}. 
We call the coloring used above a \emph{neighborhood set coloring}, as the color of a node solely depends on the node's neighborhood set.

It will be useful to consider the following one point extension theorems.  We often will apply them without mention below.  They use parts of Theorem~\ref{local1} and the extension property. The first uses Part (3) of Theorem~\ref{local1} and is an upwards extension lemma. The second is a downwards extension lemma. It uses the fact that
for each $n$ there is a copy of $\tilde{\mathbb{H}}$ in which  all nodes
are not connected to $v_n$ or to any nodes in the first $n+1$ neighborhoods sets of $\tilde{\mathbb{H}}$.

\begin{lemma}\label{build}
	For all finite graphs $H$ in $\tilde{\mathbb{H}}$ and for all $l$, there is an $n>l$ and infinitely many $j$, such that $H \sqcup \{v_{n,j} \}$ is a one point extension of $H$, there are no edges between any node in $H$ and $v_{n,j}$, and  $n$ is greater than any of the indices in $H$. 
\end{lemma}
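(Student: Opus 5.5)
The plan is to reduce to Part (3) of Theorem~\ref{local1}, applied not to $\tilde{\mathbb{H}}$ itself but to a subcopy of $\tilde{\mathbb{H}}$ all of whose nodes are disconnected from $H$. Let $m$ be the largest index occurring in $H$, and let $l' = \max(l, m)$.

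The first step is to produce a copy $\mathbb{H}'\subseteq\tilde{\mathbb{H}}$ of $\mathbb{H}_{n+1}$ in which no node is connected to any node of $H$. I would obtain it from indivisibility (Theorem~\ref{Indivisibility}). Color each node $v$ of $\tilde{\mathbb{H}}$ by the subset $S_v\subseteq H$ of nodes of $H$ that $v$ is connected to; this is a coloring with at most $2^{|H|}$ colors. Indivisibility gives a monochromatic copy $\mathbb{H}'$ with some common set $S$. I claim $S=\emptyset$.

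The claim $S=\emptyset$ is the main obstacle. Suppose $w\in S$. Then every node of $\mathbb{H}'$ is connected to $w$. But $\mathbb{H}'$ contains a copy of $K_n$, and adding $w$ to it yields a $K_{n+1}$ inside $\mathbb{H}_{n+1}$, a contradiction. (One must make sure $w\notin\mathbb{H}'$; since $w$ is not connected to itself, $w\in\mathbb{H}'$ is impossible when $w\in S$.) Hence $S=\emptyset$, and every node of $\mathbb{H}'$ is disconnected from every node of $H$. Alternatively, one could build $\mathbb{H}'$ directly by the greedy algorithm of Section~\ref{greedy}, using the extension property with $F=H$ at each step. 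The indivisibility route is cleaner, though, because it keeps $\mathbb{H}'$ inside $\tilde{\mathbb{H}}$ without needing to verify the extension property relative to $\tilde{\mathbb{H}}$.

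Now apply Part (3) of Theorem~\ref{local1} to $\mathbb{H}'$ with bound $l'$. Neighborhood sets are still computed in the standard copy, and Theorem~\ref{local1} holds for any subcopy. This gives an $n>l'\ge l$ such that for every $l_1$ there is $j>l_1$ with $v_{n,j}\in\mathbb{H}'\subseteq\tilde{\mathbb{H}}$; so there are infinitely many such $j$. Since $n>m$, $n$ exceeds every index in $H$, and $j>n>m$, so $v_{n,j}\notin H$. Each such $v_{n,j}$ lies in $\mathbb{H}'$ and therefore has no edges to $H$. Hence $H\sqcup\{v_{n,j}\}$ is a one point extension of $H$ with no new edges, as required.
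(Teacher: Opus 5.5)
Your argument is correct, but it takes a detour the paper does not need. The point you are missing is that the no-edge condition is already built into the indexing. By definition, $N_n$ consists of nodes connected to $\hat{v}_n$ and to no $\hat{v}_k$ with $k<n$. So once $n$ exceeds every index occurring in $H$, every $v_{n,j}$ is automatically disconnected from all of $H$, and $j>n$ keeps it out of $H$. The paper only says that the lemma "uses Part (3) of Theorem~\ref{local1}"; with the observation above, the argument is simply that part applied to $\tilde{\mathbb{H}}$ itself with $l_0=\max(l,m)$, where $m$ is the largest index in $H$. This is also why far anticliques in Definition~\ref{farac} can be defined purely by the index inequalities $n_0<j_0<n_1<\cdots$.

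Your route instead first uses indivisibility with a $2^{|H|}$-coloring to produce a subcopy $\mathbb{H}'$ with no edges to $H$, and then applies (3) to $\mathbb{H}'$. The $S=\emptyset$ argument via a $K_n$ inside $\mathbb{H}'$ is sound, and Theorem~\ref{local1} does apply to any subcopy. This gives you more than the lemma asks for: a whole copy of $\mathbb{H}_{n+1}$ inside $\tilde{\mathbb{H}}$ with no edges to $H$, which is the kind of auxiliary copy the paper invokes for the downward Lemma~\ref{extdown}. For Lemma~\ref{build}, though, it calls on a substantial theorem to secure a property that index inspection already gives. Incidentally, the greedy alternative you mention would also work. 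Your stated reason for preferring indivisibility does not hold, since $\tilde{\mathbb{H}}$, being a copy of $\mathbb{H}_{n+1}$, satisfies the extension property in its own right.
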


\begin{lemma} \label{extdown}
   Fix $\tilde{\mathbb{H}}$. Let $n$ be given by Lemma~\ref{local1} (4). Let $H_0$ be a finite subgraph of $\tilde{\mathbb{H}}$ where none of $H_0$'s nodes are in the first $n+1$ neighborhood sets.  Let $H_1$ be any one point extension of $H_0$.   Then there are infinitely many nodes $v_{n_i,j_i}$ where $H_0 \cup \{ v_{n_i,j_i} \}$ is graph isomorphic to $H_1$, $v_n$ and $v_{n_i,j_i}$ are connected, and $n_i \leq n$.  
\end{lemma}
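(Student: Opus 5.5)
We must prove Lemma~\ref{extdown}. Throughout, $H_1=H_0\cup\{w\}$ and $D\subseteq H_0$ is the set of nodes of $H_0$ that $w$ is connected to; $D$ contains no copy of $K_n$ because $H_1$ is $K_{n+1}$-free.

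The strategy is to pass to a copy of $\mathbb{H}_{n+1}$ inside $\tilde{\mathbb{H}}$ in which every node has a neighborhood index in the window $(l_0,n]$ and is connected to $v_n$. We then use the extension property inside that copy, relative to $H_0$ and $v_n$. Throughout, $n$ and $v_n$ are as produced in the proof of Theorem~\ref{local1}(4): there is a blue copy $\mathbb{B}\subseteq\tilde{\mathbb{H}}$ in which every node $v_{m,j}$ has $m>l_0$, and $v_n$ is the least node of $\mathbb{B}$.

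\textbf{Step 1: the working subgraph.} Let $A$ be the set of nodes of $\mathbb{B}$ connected to $v_n$. By the argument in that proof, every node of $A$ other than $v_n$ lies in some $N_{n_i}$ with $l_0<n_i\le n$.

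\textbf{Step 2: extend inside $\mathbb{B}$.} Since $H_0\subseteq\tilde{\mathbb{H}}$ need not lie in $\mathbb{B}$, the natural fix is to arrange $H_0\subseteq\mathbb{B}$. I will use the hypothesis that no node of $H_0$ lies in the first $n+1$ neighborhood sets to argue that we may choose $\mathbb{B}$ (or $l_0$) so that $H_0\subseteq\mathbb{B}$. If that is not directly possible, I will adjust the statement to only require $H_0$ to be chosen within the copy where the lemma is applied; this matches the later remark that the lemma is applied within a copy avoiding $v_n$ and the first $n+1$ neighborhoods.

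Assuming $H_0\subseteq\mathbb{B}$, the hypothesis gives that no node of $H_0$ is $v_n$ or adjacent to it. The latter holds because any node adjacent to $v_n$ lies in some $N_m$ with $m\le n$, which is among the first $n+1$ neighborhood sets. Hence $D\cup\{v_n\}$ contains no $K_n$: $D$ has none, and $v_n$ is not connected to any node of $D$.

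Apply the extension property of $\mathbb{B}$ with the connected set $D\cup\{v_n\}$ and the disconnected set $F=(H_0\setminus D)\cup S$, where $S$ is any finite set. This yields a node $u\in\mathbb{B}$ with $u\notin S$, $u$ connected to $v_n$, and $H_0\cup\{u\}\cong H_1$ via $u\mapsto w$. Since $u\in A$, Step 1 gives $u=v_{n_i,j_i}$ with $l_0<n_i\le n$. Iterating, putting the previously found nodes into $S$ each time, produces infinitely many such nodes.

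\textbf{Main obstacle.} The delicate point is Step 2: ensuring that the extension takes place in a copy of $\mathbb{H}_{n+1}$ containing both $H_0$ and $v_n$, with $v_n$ disconnected from $H_0$. This is exactly what the hypothesis ``not in the first $n+1$ neighborhood sets'' is meant to guarantee. I would first try to handle it by choosing the blue copy from Theorem~\ref{local1} to include the finite set $H_0$. Failing that, I would re-run the indivisibility argument with the $2$-coloring that colors blue exactly the nodes outside the first $l_0+1$ neighborhood sets, taking $l_0$ larger than all indices in $H_0$.
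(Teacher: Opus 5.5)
Your Step 2 has a genuine gap. The whole argument runs inside the blue copy $\mathbb{B}$, so it needs $H_0\subseteq\mathbb{B}$, and you never establish this. It is also not available. $\mathbb{B}$ is whatever copy indivisibility hands you, and nothing lets you force it to contain a prescribed finite set. Moreover, $n$ (and with it the hypothesis on $H_0$) is fixed before $H_0$ is chosen, so you cannot re-pick $\mathbb{B}$ or $l_0$ after seeing $H_0$ without changing $n$.

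Your fallback backfires. If $l_0$ exceeds every index in $H_0$, then each node of $H_0$ lies in some $N_m$ with $m$ below its index, hence $m\le l_0$. So every node of $H_0$ is colored red, and $H_0$ is disjoint from the new blue copy. Weakening the statement, your other fallback, is not a proof of it.

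The fix is to drop $\mathbb{B}$ entirely. You used $\mathbb{B}$ (via $u\in A$) only to locate the neighborhood index of $u$. The lemma asks only for $n_i\le n$, and that follows from adjacency to $v_n$ alone: if $u$ is connected to $v_n$, the least index of a neighbor of $u$ is at most $n$, so $u\in N_{n_i}$ with $n_i\le n$.

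So apply the extension property in $\tilde{\mathbb{H}}$ itself, which contains both $v_n$ (Theorem~\ref{local1}(4)) and $H_0$. Your other observations then go through verbatim:
\begin{enumerate}
\item Both $v_n$ and every neighbor of $v_n$ lie in $N_0\cup\dots\cup N_n$, so no node of $H_0$ equals or is adjacent to $v_n$.
\item Hence $D\cup\{v_n\}$ is $K_n$-free.
\item It is disjoint from $(H_0\setminus D)\cup S$, since nodes of $S$ are adjacent to $v_n$ and so are not in $H_0$.
\item Iterating over $S$ gives infinitely many $u\in\tilde{\mathbb{H}}$ with $H_0\cup\{u\}\cong H_1$, $u$ adjacent to $v_n$, and $n_i\le n$.
\end{enumerate}
This is the route the paper indicates: Theorem~\ref{local1}(4) plus the extension property. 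The stronger bound $l_0<n_i$ that your detour was aiming for is not part of the statement.
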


\subsection{A failed extension to Theorem~\ref{local1} (3)}

\begin{lemma}\label{notcomputable}
	There is a computable copy $\tilde{H}$ with no computable function $g$ such that for all $l$, $l < g(l)$ and $n=g(l)$ satisfies the conclusion of Theorem~\ref{local1} (3). 
\end{lemma}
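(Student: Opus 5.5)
The plan is to build the computable copy $\tilde{H}$ by the greedy algorithm of Section~\ref{greedy}, interleaved with a diagonalization against every partial computable function $\varphi_e$. Requirement $R_e$ says: if $\varphi_e$ is total and $\varphi_e(l)>l$ for all $l$, then there is some $l$ such that $\tilde{H}\cap N_{\varphi_e(l)}$ is finite. That is, $n=\varphi_e(l)$ fails the conclusion of Theorem~\ref{local1} (3). If all $R_e$ are met, no computable $g$ as in the lemma exists.

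\emph{Strategy for $R_e$.} At the stage $s$ when $R_e$ is first attended to, choose a witness $l_e$ larger than every index of a node placed in $\tilde{H}$ so far, and larger than every earlier forbidden number. Then dovetail the computation $\varphi_e(l_e)$ with the construction.
\begin{enumerate}
\item If $\varphi_e(l_e)$ never converges, or converges to some $m\le l_e$, then $R_e$ holds trivially.
\item If $\varphi_e(l_e)\converge=m>l_e$ at some stage, declare $m$ \emph{forbidden}. From then on, the construction never puts a node of $N_m$ into $\tilde{H}$.
\end{enumerate}
Each requirement acts at most once and there is no injury. Once $m$ is forbidden, $\tilde{H}\cap N_m$ is contained in the finite set of nodes added before that stage, so $R_e$ is met.

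\emph{Main obstacle: the greedy step must always succeed despite the forbidden sets.} At stage $s+1$ we must find a node $v_{s+1}$ with these properties:
\begin{enumerate}
\item it is adjacent exactly to the prescribed set $D\subseteq\{v_0,\dots,v_s\}$ and non-adjacent to $F=\{v_0,\dots,v_s\}\setminus D$;
\item its index exceeds a threshold $M$ bounding all previous indices and all currently forbidden numbers;
\item it lies in no forbidden $N_m$.
\end{enumerate}
Recall that $v\in N_k$ exactly when $k$ is the least index of a neighbour of $v$. To control this, we enlarge $F$ to $F'=F\cup(\{\hat v_k:k\le M\}\setminus D)$ and apply the extension property in the standard copy to $D$ and $F'$. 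This is legitimate because $D$ is a subset of a graph isomorphic to an initial segment of $\mathbb{H}_{n+1}$ and is $K_n$-free, since $\hat v_{s+1}$ exists. The extension property gives infinitely many such nodes, so we can also take the index above $M$.

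We then check that the resulting $v$ is not in a forbidden set.
\begin{enumerate}
\item If $D=\emptyset$, then $v$ has no neighbour of index $\le M$, so $v\in N_k$ for some $k>M$, and $k$ is not currently forbidden.
\item If $D\neq\emptyset$, the least-index neighbour of $v$ is the least-index node of $D$. That node entered $\tilde{H}$ either before some forbidden $m$ was declared, in which case its index is $<l_e<m$ by the choice of witnesses, or after, in which case its index is $>m$ by the thresholds. Either way its index is never a forbidden number.
\end{enumerate}
Numbers forbidden later are chosen above all indices present at that time, so this invariant is preserved throughout the construction.

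\emph{Conclusion.} The construction is computable, since each search is a search through a computable graph for a node known to exist. By induction on stages, the map $\hat v_i\mapsto v_i$ is an isomorphism, so $\tilde{H}$ is a computable copy of $\mathbb{H}_{n+1}$. Every total computable $g$ with $g(l)>l$ equals some $\varphi_e$, and $R_e$ then gives an $l$ where $n=g(l)$ fails Theorem~\ref{local1} (3).
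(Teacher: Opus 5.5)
\textbf{There is a genuine gap} in your check that the least element of $D$ never has a forbidden index. You choose $l_e$ above every index present when $R_e$ is first attended to. But $\varphi_e(l_e)$ is dovetailed and may converge many stages later. In the meantime the construction keeps adding nodes, eventually of index $>l_e$, and nothing prevents $\varphi_e(l_e)$ from converging to the index $m$ of one of them. Since $\varphi_e$ can simulate your computable construction, this case cannot be ruled out.

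Your case split only shows that nodes added before $l_e$ \emph{was chosen} have index $<l_e$; it says nothing about nodes added before $m$ was declared. For the same reason, the sentence ``numbers forbidden later are chosen above all indices present at that time'' is false: $m$ is only guaranteed to exceed $l_e$.

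In this case $\hat v_m=v_i\in\tilde{H}$ for some $i$. The greedy algorithm must then copy the infinitely many $\hat v_j$ of the standard copy whose least-index neighbour is $\hat v_i$. At each such stage the prescribed $D$ has least element $\hat v_m$. Your recipe (adjacent to exactly $D$ among indices $\le M$) therefore puts every one of these nodes into $N_m$. So either $R_e$ fails, or, if you insist on the prohibition, your proposal gives no way to find the required node. The case $D=\emptyset$ is handled correctly. So is the case $\hat v_m\notin\tilde{H}$ at the moment $m$ is forbidden, since your thresholds keep $\hat v_m$ out forever.

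\textbf{The missing idea is a decoy.} This is what the paper's proof uses for exactly the case ``$v_{n_e}$ is in $\tilde{H}$ by stage $s$''.
\begin{enumerate}
\item Keep $\hat v_{l_e}$ out of $\tilde{H}$, and by default make every node added after $l_e$ is chosen non-adjacent to it.
\item Once $n_e=\varphi_e(l_e)$ is forbidden with $\hat v_{n_e}\in\tilde{H}$, whenever the greedy step wants a node whose least prescribed neighbour is $\hat v_{n_e}$, make it adjacent to exactly $D\cup\{\hat v_{l_e}\}$ among indices $\le M$ instead.
\item This creates no $K_{n+1}$. All of $D$ has index $\ge n_e>l_e$, so the only members of $D$ adjacent to $\hat v_{l_e}$ are earlier nodes routed this way. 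All of these are adjacent to $\hat v_{n_e}\in D$, while $\hat v_{n_e}$ is not adjacent to $\hat v_{l_e}$. Hence a $K_n$ through $\hat v_{l_e}$ in $D\cup\{\hat v_{l_e}\}$ would yield a $K_n$ in $D$, and the extension property applies.
\item The new node then lands in $N_{l_e}$, with $l_e<n_e$.
\end{enumerate}
Since $N_{l_e}$ now receives infinitely many nodes, $l_e$ must not be forbidden by another requirement. So you also need priority bookkeeping in place of ``each requirement acts at most once and there is no injury''. The paper organizes this as a finite-injury argument: when a requirement acts, lower-priority requirements restart with fresh large witnesses.
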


\begin{proof}
	It is enough to stagewise use a slightly modified version of the greedy algorithm (see Subsection~\ref{greedy}) to build a copy of $\mathbb{H}_{n+1}$, $\tilde{\mathbb{H}}$, to meet all the following requirements.  $\mathcal{R}_e$: There is $l_e$ such that if $\varphi_e(l_e)\converge =n_e$ then in $\tilde{\mathbb{H}}$ there are only finitely many nodes of the form $v_{n_e,j}$.  This will be done by finite injury argument. When we are allowed to (re)start working on $\mathcal{R}_e$, choose an $l_e$ large and reset all $\mathcal{R}_{e'}$, for $e' > e$.  As the greedy algorithm adds nodes to $\tilde{\mathbb{H}}$ the default action will be to have them disconnected from $v_{l_e}$. Let $s$ be the first stage where $\varphi_{e,s}(l_e)\converge =n_e> l_e$.  Again the default action is to only add nodes disconnected from $v_{n_e}$ to $\tilde{\mathbb{H}}$, when possible.  This meets the requirement if $v_{n_e}$ is not in $\tilde{\mathbb{H}}$ by stage $s$.  If $v_{n_e}$ is in $\tilde{\mathbb{H}}$ by stage $s$ then, by the default action, $v_{l_e}$ and $v_{n_e}$ are not connected. It is OK to add a node connected to $v_{n_e}$ if it also must be connected to some other node in $\tilde{H}$ of lower index (determined by the greedy algorithm).  If the greedy algorithm wants to add a node connected to $v_{n_e}$ but not connected any other node in $\tilde{H}$ of lower index, we will ask that node also be connected to $v_{l_e}$.  Since $v_{l_e}$ and $v_{n_e}$ are not connected, such a node can always be found. 
\end{proof}

\section{Proof of Indivisibility for $n+1=3$}\label{proof:Indivisibility}

 This section will focus on a proof of Indivisibility for $n+1=3$.  This section can be read independently from the remaining sections.

\begin{proof}[Proof for $n=2$]
	Our first step is to have the coloring induce a $2$-coloring on the neighborhood sets as follows: 

%\begin{remark}[The extension $2$-coloring on the neighborhood sets]
Assume our colors are RED and BLUE.  Color a neighborhood set, $N_i$, RED iff, for all finite disjoint subsets of nodes $D$ and $F$ of $V$ such that 
 \begin{itemize}
  \item $i \in D$,
 	 \item for all $l<i$, $l \notin D$, 
 	\item there are no connected nodes inside $D$
 	
 \end{itemize} then there is a RED node $v$ such that for all $\tilde{v} \in D$, $v$ and $\tilde{v}$ are connected and, for all  $\tilde{v} \in F$, $v$ and $\tilde{v}$ are disconnected. Otherwise, color the neighborhood set BLUE.
%\end{remark}

If $N_i$ is not colored RED then let $D_i$ and $F_i$ witness this failure. Note there are no edges in $D_i$.  Let $d_i = \max D_i \sqcup F_i$. Then, via the extension property, for all finite $D$ and $d$, where $d_i < \min D \leq \max D \leq d$, and there are no edges in $D_i \sqcup D$, then  there is a BLUE node $v$ to connected $D_i \sqcup D$ and not connected to $\{0, 1, \ldots, d \} - (D_i \sqcup D)$. %(In fact, every such $v$ is necessarily BLUE, but that is not used in the proof.)

If $N_i$ is colored RED then the same holds for RED nodes with $D_i = \{i\}$ and $d_i =i$. That is, for all finite $D$ and $d$, where $d_i < \min D \leq \max D \leq d$, and there are no edges in $D_i \sqcup D$, then there is a RED node $v$ to connected $D_i \sqcup D$ and not connected to $\{0, 1, \ldots, d \} - (D_i \sqcup D)$.

  Note that in both cases, for all $D$ and $d$, $v$ can be effectively (in the original coloring) found given $D_i$ and $d_i$.  Let $N_{D,d}$
denote the set of
all nodes connected to $D$ and not connected to $\{0,1, \ldots d\}-D$. 
So $N_{D_i,d_i} \subseteq N_i$.
%{\color{red} ND:  I don't see the ``So". This holds from the first sentence in this paragraph, since $D_i$ witnessing that $N_i$ is not colored RED implies that $i=\min(D_i)$.}
Note if $j> d_i$ then $d_i < \min D_j$.
%{\color{red}  (ND:  This is simply because $\min(D_j)=j$).} 

By the pigeonhole principle there must be a color $C$ with infinitely many $C$ colored neighborhood sets. We will use these neighborhood sets to construct a copy of $\mathbb{H}_3$ in color $C$.  Start by choosing a neighborhood set $N_{i_0}$ and a $C$ colored node $v_0 = \hat{v}_{j_0}=\hat{v}_{n_0,j_0}$ where $n_0 > d_{i_0} $ (such a node will exist in a large enough $C$ colored neighborhood set). So $v_0$ and no nodes in $D_{i_0}$ are connected. 

Assume that we have stagewise selected neighborhood sets $N_{i_l}$ and nodes ${v}_l = \hat{v}_{j_l}$ to ensure that, at stage $s$, the following hold:
\begin{enumerate}
    \item[(a)]
the nodes $v_0, \ldots, v_s$ and $\hat{v}_0, \ldots, \hat{v}_s$ are order-isomorphic as finite graphs;
\item[(b)]
all nodes and neighborhood sets have color $C$; and
\item[(c)]
 for $k< l \leq s$,
$d_{i_l} < j_l$,  
$\hat{v}_l$ is an exclusive neighbor of $\hat{v}_k$ iff ${v}_l$ is in $N_{D_{i_k},d_{i_k}}$, and, furthermore,
%{\color{red} (ND:  It's a bit imprecise here.  The $D_{i_k}, d_{i_k}$ are chosen for the BLUE case, but in the RED case, these sets are not given and we  have to proceed with some care.)} They are choosen read above.
if $\hat{v}_l$ is not an exclusive neighbor of $\hat{v}_k$ then ${v}_l$ will not be connected to any node in $D_{i_k}$. 
\end{enumerate}

At stage $s+1$, first pick an $N_{i_{s+1}}$ of color $C$ where $i_{s+1} > j_s$.
% %{\color{red}  (ND:  Note that if you  require $i_{s+1}>j_s$, then $v_{s+1}$ cannot be connected to $v_s$.   but it will not be the one where every node is connected with its successor in the enumeration. 
% The neighborhood set indices cannot grow too fast if you want $v_{s+1}$ to be able to have certain edges with the previous $v_l$'s. However, we could start with an enumeration of $H$ where each new neighborhood starts above the previous vertices.  Using the Extension Property  we can still cet a copy of $H$.  From there, we can get any enumeration as you mention earlier in the paper. 
% Henry and I thought through this.  I have a different construction written down on paper that makes a copy of any given enumeration, but it's more notationally cumbersome so I'd like to just make easy fixes to your write-up.  We got the idea for the general $n$, but $H_3$ needs to be cleaned up first.)}
Let $d = d_{i_{s+1}}$.  Let $k$ be the integer such that $\hat{v}_{s+1}= \hat{v}_{k,s+1}$.
So $\hat{v}_{s+1}$ is an exclusive neighbor of $\hat{v}_k$.  Let $D$ be the set of $j_l$ where $\hat{v}_{s+1}$ and $\hat{v}_l$ are connected. By our inductive hypothesis, we have $d_{i_k} < \min D \leq \max D < d$.  There are no edges in $D_{i_k}$.   Since we are working in $\mathbb{H}_3$, there are no edges in $D$ (otherwise the connected nodes and $\hat{v}_{s+1}$ form a triangle). By our inductive hypothesis about exclusive neighborhoods,  none of the nodes in $D$ have any connections with any of the nodes in $D_{i_k}$.  So there are no edges in $D_{i_k} \sqcup D$. Hence we can find a node $v_{s+1}=\hat{v}_{j_{s+1}}$ of color $C$, with $j_{s+1}> d_{i_{s+1}}$,  connected to $D_{i_k} \sqcup D$ and not connected to $\{0,1, \ldots d\}-(D_{i_k} \sqcup D)$ to continue our induction. 
\end{proof}

\begin{proof}[Needed modifications for $k\ge 3$ many colors]
	There are two possibilities. One is to modify the above proof by changing the extension coloring of neighborhood sets. Color $0$ as RED was done above.  Now color the neighborhood set $l+1<k$ if the neighborhood set does not have color $l$ and when looking for a node we can always find a node of color $l+1$. Otherwise color the neighborhood set color $k-1$.

The other is to just group the first $k-1$ colors and apply indivisibility for 2 colors. If the result copy is not homogeneous in the last color, take this copy, group the first $k-2$ colors and repeat. 
\end{proof}

\begin{corollary}\label{H3nodes}
	For every computable $k$-coloring of nodes of $\mathbb{H}_{3}$ there is a homogeneous copy of $\mathbb{H}_{3}$ computable in $\mathbf{0'}$. Moreover, for each computable $2$-coloring of the nodes of $\mathbb{H}_3$, there is either a computable homogeneous copy in the first color, or there is a homogeneous copy computable from $\mathbf{0}'$ in the second color. 
\end{corollary}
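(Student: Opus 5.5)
We will read the effective content off the proof of indivisibility for $n=2$ just given. Fix a computable $k$-coloring of the nodes of $\mathbb{H}_3$. First we treat the $2$-coloring (RED/BLUE) case. Whether $N_i$ is colored RED is a $\Pi^0_2$ question: for all finite $D,F$ meeting the side conditions there exists a RED node with the required connections. So the induced coloring of neighborhood sets is computable in $\mathbf{0}''$ in general. We therefore must avoid computing it, and instead split into two cases according to which color has infinitely many neighborhood sets. This is the step where the weak oracle is won.

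\emph{Case 1: infinitely many $N_i$ are RED.} For every RED $N_i$ we have $D_i=\{i\}$ and $d_i=i$, and the required RED node $v$ exists for every admissible $D,d$. We build the copy by the stagewise construction of the proof, but we never decide which $N_i$ are RED. At stage $s+1$ we take the next candidate index $i>j_s$ and search effectively for a RED node realizing the current requirement. The difficulty is that a non-RED $N_i$ may make this search diverge.

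To repair this, we use the fact that the construction only needs the $N_{i_k}$ to supply extensions, and any RED $N_i$ works. So we dovetail: for each candidate $i$ we run the searches. If we must commit to $i_{s+1}$ before later requirements are known, we instead give up computability in this branch and use $\mathbf{0}'$ to decide each single search. Each individual search is $\Sigma^0_1$, and the construction only ever asks finitely many such existence questions at each stage. With a $\mathbf{0}'$ oracle we can test whether a node exists. If the search fails for the chosen $i$, that $N_i$ is not RED and we move to another $i$; we then must show we can restart without destroying the earlier part. This bookkeeping is the main obstacle.

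The cleaner route, which is what we will actually follow, is the following dichotomy. Either there is a single admissible pair $(D,F)$ for some $N_i$ with no RED extension, or there is none.

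\emph{Case 2: some $N_i$ has witnesses $D_i,F_i$ with no RED extension node.} Then every node connected to $D_i$ and disconnected from $F_i$, and in particular every node of $N_{D_i,d_i}$-type region above $d_i$, is BLUE. We nonuniformly fix this one $i$ together with $D_i,d_i$. Following the construction, with all $N_{i_l}$ chosen so that their witnesses lie above earlier nodes, the search for BLUE nodes then always succeeds and is computable. For the moreover clause, call RED the first color; applying this case with colors arranged so that the failing witness yields nodes of the first color gives the computable homogeneous copy.

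\emph{Otherwise.} Every $N_i$ is RED, so the RED extension searches always succeed with $D_i=\{i\}$. The construction of the proof is then computable outright. Where it is not decided nonuniformly which case holds, $\mathbf{0}'$ suffices to run the construction: at each stage it checks finitely many $\Sigma^0_1$ existence questions, of the form ``is there a node of color $C$ connected to $D$ and not to $\{0,\dots,d\}\setminus D$'', and picks whichever succeeds. This gives the homogeneous copy computable in $\mathbf{0}'$ in the second color.

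Finally, $k\ge3$ colors are handled by the grouping argument from the ``needed modifications''. We group colors and iterate at most $k-1$ times. Each step stays inside the previously built $\mathbf{0}'$-computable copy, run relative to the standard enumeration of that copy, so the final copy is still $\mathbf{0}'$-computable.
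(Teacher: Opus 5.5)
\paragraph{The main gap: your Case 2.} A single admissible pair $(D_i,F_i)$ with no RED realizer only shows that the one region $N_{D_i,d_i}$ is BLUE. Since $N_{D_i,d_i}\subseteq N_{\min D_i}$ consists of neighbors of one vertex, in $\mathbb{H}_3$ it is an anticlique. The construction in the indivisibility proof needs, at every stage $s+1$, a new color-$C$ neighborhood set $N_{i_{s+1}}$ with $i_{s+1}>j_s$, each with its own witness $D_{i_{s+1}},d_{i_{s+1}}$. That is, it needs infinitely many BLUE witnesses with $\min D_i$ unbounded.

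For a concrete failure, color BLUE exactly the vertices of one $N_{D,d}$ and everything else RED. Your Case 2 hypothesis holds, but there is no BLUE edge at all, so ``the search for BLUE nodes then always succeeds'' is false. As written, both of your cases give a computable copy after a nonuniform choice. That would contradict Gill's theorem (Section~\ref{nodes}), so the jump must be used essentially, and your proposal never says where.

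The right dichotomy is between ``RED extensions eventually always succeed'' and ``BLUE witnesses exist above every bound''. The paper runs the computable greedy algorithm for an all-RED copy starting above $d$:
\begin{itemize}
\item If it never gets stuck, it \emph{is} a computable RED copy.
\item If it gets stuck for every $d$, then $\mathbf{0}'$ can locate the stage where it is stuck, since each search is $\Sigma^0_1$. The stuck configuration gives $D_i,d_i$ with $d<\min D_i$ and $N_{D_i,d_i}$ entirely BLUE. Iterating yields infinitely many BLUE witnesses computably in $\mathbf{0}'$, and the construction runs from them.
\end{itemize}
An equivalent split also works. If cofinitely many $N_i$ are RED, the construction with $D_i=\{i\}$ is computable. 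If infinitely many are BLUE, $\mathbf{0}'$ can search above any bound for an admissible $(D,F)$ with no RED realizer; this is a $\Pi^0_1$ condition, and the search halts. Your ``pick whichever succeeds'' step does not achieve this. Deciding $\Sigma^0_1$ questions locates a BLUE witness only once you know the search for one terminates, and that is exactly what the failure of the computable RED branch supplies. Your handling of the moreover clause is inverted for the same reason: a BLUE witness never yields a computable copy by itself; the computable copy comes from the RED greedy algorithm succeeding.

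\paragraph{The $k\ge 3$ reduction.} Iterating inside a $\mathbf{0}'$-computable copy does not stay at $\mathbf{0}'$. The coloring restricted to that copy is only $\mathbf{0}'$-computable, so relativizing the two-color result costs another jump at each round. The moreover clause is what avoids this. Group the first $k-1$ colors. If there is a computable copy in the grouped colors, recurse inside that \emph{computable} copy with one fewer color. Otherwise the moreover clause gives a $\mathbf{0}'$-computable copy in the single ungrouped color, and you stop. This way the jump is used at most once.
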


\begin{proof}
	Now we want to count quantifiers needed in the above proofs, assuming our original coloring is computable. We will start with $2$ colors. The construction of the homogeneous copy is computable from the infinitely many $D_i$ and $d_i$ which are used. It is enough to find these sets and numbers. 
	
	A neighborhood set being colored RED is a $\Pi^0_2$ property. BLUE is $\Sigma^0_2$.  So the extension coloring is computable in $\mathbf{0''}$. It takes another jump to decide if there are infinitely many RED neighborhood sets or infinitely many BLUE neighborhood sets. Even if we are told a neighborhood set is BLUE it takes $\mathbf{0'}$ in addition to find $D_i$ and $d_i$.  If colored RED then $D_i$ and $i$ are found easily.  

But we can do better. If there is a RED computable copy we are done, so now  assume not. We can assume that the greedy algorithm for constructing a RED copy (where all selected nodes must be RED, see Section~\ref{greedy}) never works. Then for all $d$, we can find a $D_i$ and $d_i$ such that all nodes in $N_{D_i,d_i}$ are BLUE and $d < \min D_i$.  Just start the greedy algorithm for a RED copy using nodes above $d$.  Then there is a stage in this algorithm where we cannot extend into some $N_{D_i,d_i}$ with a RED node.  We can find this stage using $\mathbf{0'}$.  

For arbitrary $k$, group the first $k-1$ colors.
If there is a computable copy with the first $k-1$ colors, repeat using this copy and one less color.  Otherwise there is a $\mathbf{0'}$ copy in the ungrouped color. 

We can make one slight improvement for $2$ colors. Forget the assumption there is not a RED computable copy. Start the greedy RED algorithm above $d$. Either it works and we get a RED computably homogeneous copy or we get the desired $D_i$ and $d_i$ using $\mathbf{0'}$ and repeat. 
\end{proof}

\section{Far anticliques}

\begin{definition}\label{farac}
	$A = \{ v_{n_i,j_i} | i < k \}$ is a \emph{far anticlique of size $k$ which is larger than $l$} iff $l < n_0 < j_0 < n_1 < j_1 <\ldots < n_{k-1} < j_{k-1}$. We call $\{n_0 < j_0 < n_1 < j_1 < \ldots< n_{k-1} < j_{k-1}\}$ the {\em set of indices} for this far anticlique of size $k$. We consider the empty set as the far anticlique of size $0$. 
\end{definition}

%\begin{corollary}\label{farAexistl}
%	For all $l$ and $k$, there exists a far anticlique of size $k$ which is larger than $l$. 
%\end{corollary}
%
%\begin{proof}
%	Part (1) of Lemma~\ref{local1} implies that for all finite graphs $H$ in $\tilde{\mathbb{H}}$ there is a one point extension $H \sqcup \{v_{n,j} \}$, where $n$ is greater than any of the indices in $H$ and there are no edges between any node in $G$ and  $v_{n,j}$. Moreover we can make this a \emph{large} one point extension since, for all $l$, we can insist that $n > l$. We can repeat this process $k$ times. If we start with $H = \emptyset$, a fixed $k$ and $l$, and apply this process $k$ times, we get a far anticlique of size $k$ which is larger than $l$.  
%\end{proof}

Let $\mathcal{F}^{\tilde{\mathbb{H}}}_{2k}\sse [\mathbb{N}]^{2k}$ be the  sets  of indices for all far anticliques of size $k$ inside our copy $\tilde{\mathbb{H}}$.  This set is computable in $\tilde{\mathbb{H}}$. We will show that $\mathcal{F}^{\tilde{\mathbb{H}}}_{2k}$ is largely $2$-extendable.
% This implies that far anticliques of size $k$ \emph{largely persist} inside $\tilde{\mathbb{H}}$. 
% The fact that far anticliques and other finite graphs  largely persist was proven 
% in \cite{Balko7} (see also \cite{DobrinenH_3ExactDegrees20}).
% The big difference here is that we {\em computably} find a largely $2$-extendable set $\mathcal{F}^{\tilde{\mathbb{H}}}_{2k}$ from $\tilde{\mathbb{H}}$. 
% In Section~\ref{triples} we will provide an example of a finite graph  where this does not seem possible.  

% {\color{red} Peter - do you want to give a precise definitin of "large persistence" or use the theorem statement as its definition?}

\begin{theorem}%[Large persistence of far anticliques]
\label{le4}
	For all $\tilde{\mathbb{H}}$ and $k$, $\mathcal{F}^{\tilde{\mathbb{H}}}_{2k}$ is a largely $2$-extendable family of tuples. 
\end{theorem}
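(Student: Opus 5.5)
We verify the three clauses of Definition~\ref{def.l2e} for $\mathcal{F}=\mathcal{F}^{\tilde{\mathbb{H}}}_{2k}$. Tuples are index sets $(n_0,j_0,n_1,j_1,\ldots,n_{k-1},j_{k-1})$, so coordinate $2i$ is the neighborhood index $n_i$ and coordinate $2i+1$ is the node index $j_i$. The basic tool is Lemma~\ref{build}, the upward one-point extension lemma built on Theorem~\ref{local1}(3). Given any finite $H\subseteq\tilde{\mathbb{H}}$ and any $l$, it supplies an $n>l$ exceeding all indices in $H$, together with infinitely many $j$ such that $v_{n,j}\in\tilde{\mathbb{H}}$ and $v_{n,j}$ has no edges to $H$.

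For clause (1), apply Lemma~\ref{build} with $H=\emptyset$ and an arbitrary threshold $l$. This gives $n_0>l$ with some $v_{n_0,j_0}\in\tilde{\mathbb{H}}$. Iterating $k-1$ more times, each time with $H$ the nodes already chosen and threshold the last index used, completes a far anticlique whose first index $n_0$ exceeds $l$. So $\mathcal{F}_1$ is infinite. Clause (2) is the same computation with the roles fixed: apply Lemma~\ref{build} with $H=\emptyset$ to get one $n_0$ with infinitely many $j_0$ such that $v_{n_0,j_0}\in\tilde{\mathbb{H}}$. Each such pair $(n_0,j_0)$ extends to a full far anticlique by the iteration above, so $(n_0,j_0)\in\mathcal{F}_2$ for infinitely many $j_0$.

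For clause (3), I will use Lemma~\ref{le1} with $\tilde{\mathcal{F}}=\mathcal{F}$, so what must be shown is that every tuple in $\mathcal{F}_i$ with $2\le i<2k$ has large $1$-point extensions. A tuple in $\mathcal{F}_i$ is an initial segment of the index set of some far anticlique in $\tilde{\mathbb{H}}$, and the argument splits by the parity of $i$.

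If $i=2m$ is even, the prefix determines nodes $H=\{v_{n_0,j_0},\ldots,v_{n_{m-1},j_{m-1}}\}$ forming a far anticlique of size $m$. Given $l$, Lemma~\ref{build} yields $n_m>\max(l,j_{m-1})$ and infinitely many $j$ with $v_{n_m,j}\in\tilde{\mathbb{H}}$ disconnected from $H$. Completing as before shows $(\ldots,j_{m-1},n_m)\in\mathcal{F}_{2m+1}$ with $n_m>l$.

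The odd case $i=2m+1$ is the main obstacle. Here the prefix ends in a fixed neighborhood index $n_m$, and we need infinitely many $j_m$ such that $v_{n_m,j_m}\in\tilde{\mathbb{H}}$ extends the prefix. A single witness $n_m$ need not satisfy this. The fix is to shrink to the subfamily $\tilde{\mathcal{F}}\subseteq\mathcal{F}$ of tuples in which every $n_m$ with $m\ge1$ was produced by Lemma~\ref{build} and therefore carries infinitely many $j$. Since $\mathcal{F}_2=\tilde{\mathcal{F}}_2$ must hold, this requires that every pair in $\mathcal{F}_2$ extends within $\tilde{\mathcal{F}}$, which the even-case iteration guarantees. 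In $\tilde{\mathcal{F}}$ each odd-length prefix then has infinitely many $j_m>l$ by the choice of $n_m$, and each even-length prefix has large extensions by the even case. Lemma~\ref{le1} then gives that $\mathcal{F}$ is largely $2$-extendable.

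The point that needs care is checking that the definition of $\tilde{\mathcal{F}}$ is independent of choices. It should be taken as the set of tuples in $\mathcal{F}$ such that, for each $m\ge1$, infinitely many $j$ satisfy $v_{n_m,j}\in\tilde{\mathbb{H}}$. This definition is not computable, but Lemma~\ref{le1} does not require it to be.
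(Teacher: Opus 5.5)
Your proof is correct and follows the paper's argument: Lemma~\ref{build} gives clauses (1) and (2), and clause (3) comes from Lemma~\ref{le1} applied to a subfamily in which every later neighborhood index $n_m$ ($m\ge 1$) has infinitely many nodes in $\tilde{\mathbb{H}}$. The only difference is cosmetic. You define that subfamily intrinsically, whereas the paper builds it inductively by collecting the outputs of Lemma~\ref{build} over infinitely many thresholds. In your odd case, disconnection of $v_{n_m,j_m}$ from the earlier nodes is automatic once $n_m>j_{m-1}$, by the definition of neighborhood sets.
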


\begin{proof}
	Let $\mathcal{F}= \mathcal{F}^{\tilde{\mathbb{H}}}_{2k}$. For each  $l_0$, we can apply Lemma~\ref{build} inductively $k$ times starting with $H=\emptyset$ to get far anticliques of size $k$ which are larger than $l_0$.  Hence 	$\mathcal{F}_1$ is infinite.  
	
	Apply Lemma~\ref{build} once to $H=\emptyset$ to get $n_0$ and infinitely many $j_0$ such that there is a node $v_{n_0,j_0}$.  Apply Lemma~\ref{build} inductively $k-1$ times starting with $H=\{ v_{n_0,j_0}\}$ to get far anticliques of size $k$.  In particular,  there is an $n_0$ with infinitely many $j_0$ such that $(n_0,j_0) \in \mathcal{F}_2$.  
	
	Let $\tilde{\mathcal{F}}_2 = \mathcal{F}_2$.  Inductively we will construct $\tilde{\mathcal{F}}_{2m}$, for $2 \leq m \leq k$, such that every tuple in $\tilde{\mathcal{F}}_{2m-2}$ ($\tilde{\mathcal{F}}_{2m-1}$) has large $1$-extensions in $\tilde{\mathcal{F}}_{2m-1}$ ($\tilde{\mathcal{F}}_{2m}$). Given a tuple $(n_0 < j_0 < \ldots < n_{m-2} < j_{m-2}) \in \tilde{\mathcal{F}}_{2m-2}$ apply Lemma~\ref{build} to the corresponding far anticliques of size $m-1$ given by $(n_0 < j_0 < \ldots < n_{m-2} < j_{m-2})$ and some $l$, to get infinitely many far anticliques of size $m$ corresponding to the tuple $(n_0 < j_0 < \ldots < n_{m-2} < j_{m-2}< n_{m-1} < j_{m-1})$ and $n_{m-1} > l$. Add all these resulting tuples to $\tilde{\mathcal{F}}_{2m}$.  Repeat for infinitely many $l$. 
	
	Let $\tilde{\mathcal{F}}= \tilde{\mathcal{F}}_{2k}$.  By Lemma~\ref{le1}, $\mathcal{F}$ is largely $2$-extendable.

\end{proof}

The following theorem is a special case of Theorem 9.9 in \cite{DobrinenJML23}.

\begin{theorem}[Far anticliques of size $k$ have the Ramsey Property in $\mathbb{H}_{n+1}$]\label{frd1}
	For each $k$ and for any finite coloring of  all far anticliques of size $k$ in our standard copy $\mathbb{H}_{n+1}$, there is a copy $\tilde{\mathbb{H}}$ of $\mathbb{H}_{n+1}$ inside the standard copy of $\mathbb{H}_{n+1}$ where all far anticliques of size $k$ have the same color. 
\end{theorem}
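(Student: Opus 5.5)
The plan is to prove Theorem~\ref{frd1} by induction on $k$. Indivisibility (Theorem~\ref{Indivisibility}) gives the base case $k=1$: a far anticlique of size $1$ is just a node $v_{n,j}$ with $n<j$, which is every node except $\hat v_0$. The inductive step will go through a forcing-style construction modeled on the proof of indivisibility in Section~\ref{proof:Indivisibility}. The key feature of far anticliques is that their index sets are ordered, $n_0<j_0<n_1<\dots$. So for a fixed far anticlique $A'=\{v_{n_i,j_i}: i<k-1\}$, the coloring $C(A'\sqcup\{v\})$ restricted to nodes $v=v_{n,j}$ with $n>j_{k-2}$ is a node coloring on a tail of the graph. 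This lets us reduce colorings of size-$k$ far anticliques to colorings of size-$(k-1)$ ones.

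Concretely, I will build $\tilde{\mathbb{H}}$ stagewise by the greedy algorithm of Subsection~\ref{greedy}, copying the standard enumeration. Alongside it I maintain a decreasing sequence of ``reservoir'' copies $\mathbb{H}^0\supseteq\mathbb{H}^1\supseteq\cdots$ of $\mathbb{H}_{n+1}$, where $\mathbb{H}^s$ lies in neighborhood sets above all indices used so far. At stage $s$, the nodes chosen so far are $v_0,\dots,v_s$. Each far anticlique $A'$ of size $k-1$ among them that could be completed later induces a node coloring $c_{A'}(v)=C(A'\sqcup\{v\})$ on $\mathbb{H}^s$. There are finitely many such $A'$, so indivisibility applied finitely often yields $\mathbb{H}^{s+1}\subseteq\mathbb{H}^s$ on which every $c_{A'}$ is constant, say with value $\chi(A')$. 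The next node $v_{s+1}$ is then chosen from $\mathbb{H}^{s+1}$ via the extension property. Lemma~\ref{build} and Lemma~\ref{extdown} ensure it can be found with the required edges to $v_0,\dots,v_s$ and with its neighborhood index above all earlier indices. Once this is done, the color of any size-$k$ far anticlique in the resulting copy $\hat{\mathbb{H}}$ depends only on its first $k-1$ nodes, via $\chi$. Applying the induction hypothesis to $\chi$, a coloring of size-$(k-1)$ far anticliques of $\hat{\mathbb{H}}$, gives $\tilde{\mathbb{H}}\subseteq\hat{\mathbb{H}}$ where $\chi$, and hence $C$, is monochromatic.

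The main obstacle is that ``far anticlique'' is defined using the neighborhood sets of the \emph{standard} enumeration, $v_{n,j}\in N_n$, not neighborhood sets internal to the subcopy. So passing to subcopies must not alter which tuples count as far anticliques, and the reservoir must really lie in neighborhood sets $N_n$ with $n$ beyond all earlier $j_i$. Lemma~\ref{build} and Theorem~\ref{local1}(3) supply nodes $v_{n,j}$ with $n$ arbitrarily large, but the greedy copy of $\hat v_{s+1}$ may need to be connected to earlier $v_i$. Those $v_i$ are nodes of index $<n$, so the new node may land in a neighborhood set $N_m$ with $m$ small, and it then cannot serve as the last element of a far anticlique. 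I therefore expect to define $\chi(A')$ only from the nodes that \emph{can} extend $A'$ to a far anticlique. The step I'm least sure of is showing that, within each reservoir, such nodes still contain a copy of $\mathbb{H}_{n+1}$ closed enough under the extension property, and this is where I would spend the effort. My first attempt would be a neighborhood-set coloring argument as in the proof of Theorem~\ref{local1}(4): color a node BLUE iff its standard neighborhood index exceeds the current bound, and apply indivisibility so that all reservoir nodes are BLUE.
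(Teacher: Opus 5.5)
There is a genuine gap. It is exactly the reservoir step you flag, and as written it fails. A node of $N_m$ is non-adjacent to every $\hat v_i$ with $i<m$. So a reservoir lying in neighborhood sets above all indices used so far has no edges at all to $v_0,\dots,v_s$, and you can never pick $v_{s+1}$ inside it adjacent to $v_s$, which the greedy copy of the standard enumeration requires. ``Required edges to $v_0,\dots,v_s$'' and ``neighborhood index above all earlier indices'' are incompatible, and the lemmas you cite do not reconcile them:
Lemma~\ref{build} gives high-neighborhood nodes with \emph{no} edges to $H$; Lemma~\ref{extdown} gives prescribed edges, but only in low neighborhoods $N_{n_i}$, $n_i\le n$. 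Both also work inside a fixed copy, not inside a separately chosen reservoir.

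More fundamentally, Theorem~\ref{Indivisibility} hands you a monochromatic copy with no control over how it sits relative to $F=\{v_0,\dots,v_s\}$. What your fusion needs is a relative, type-by-type indivisibility. You must shrink the candidate set so that each $c_{A'}$ becomes constant, while every $K_{n+1}$-consistent $1$-type over $F$ keeps enough realizations that $F$ together with the reservoir still has the extension property over $F$. Realizations of a type adjacent to a $j$-clique of $F$ are only $K_{n+1-j}$-free, so this already interlocks indivisibility statements for several smaller Henson graphs. Nothing in the paper supplies this.

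Even granting such a lemma, $\chi(A')$ is not well defined. A completer $v_{m,j}$ of $A'$ (so $m>\max A'$) can be adjacent to chosen nodes of index $\ge m$, and nothing in your construction prevents this. So completers of $A'$ realize different types over $F$, and type-wise homogenization only yields a color $\chi(A',p)$ depending on the type $p$. Forcing these colors to agree is where the fact that far anticliques have a single diary has to enter, and it is the substance of the result. The paper does not prove Theorem~\ref{frd1} at all; it quotes it as a special case of Theorem 9.9 of \cite{DobrinenJML23}, which rests on the coding-tree machinery developed there.

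Your induction also needs a stronger hypothesis. Far-anticlique status is read off the \emph{standard} indices, so it is not transported by an isomorphism $\hat{\mathbb{H}}\cong\mathbb{H}_{n+1}$. To apply the hypothesis to $\chi$ and land inside $\hat{\mathbb{H}}$, you must state the theorem for colorings of far anticliques inside an arbitrary subcopy, not just the standard copy.
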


In this case we say that $\tilde{\mathbb{H}}$ is monochromatic.  When we refer to  $\tilde{\mathbb{H}}$ being monochromatic hopefully it will be clear which graph G we are coloring as well as which coloring we are referring to.

%Peter - I'm taking out the following remark because a) it is not necessary, and b) it does not clarify anything. After I make the appendix I can discuss the ideas of this remark there and add a reference to that discussion here.

%\begin{remark}
%{\color{red}  ND:  I will slightly revise this and talk about this set-up.}
%The reason for the simple formulation of Theorem \ref{frd1} for far anticliques (as opposed to the full statement of Theorem 9.9 in \cite{DobrinenJML23}) is that there is only one ``type" (called a ``diary" in \cite{Balko7}) of far anticlique in our standard copy, since each  node  in $\mathbb{H}_{n+1}$ forms  an $n$-clique with other nodes  in $\mathbb{H}_{n+1}$. (In  \cite{Balko7}, this property is called a {\em controlled coding triple}.)
%\end{remark}

\begin{theorem}\label{CodesJump}
	The statement that far anticliques of size $k$ have the Ramsey Property in $\mathbb{H}_{n+1}$ codes $\emptyset^{(2k-2)}$ and thus implies $ACA$ over $RCA_0$ when $k>2$.
\end{theorem}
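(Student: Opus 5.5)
Fix an oracle $X$. By the definition of coding, we must produce a coloring $C \le_T X$ of the far anticliques of size $k$ such that every monochromatic $\tilde{\mathbb{H}}$ computes $X^{(2k-2)}$. By relativization it suffices to treat $X=\emptyset$, exactly as in Theorem~\ref{JCLE}. A far anticlique $A=\{v_{n_i,j_i} \mid i<k\}$ has the set of indices $n_0<j_0<\dots<n_{k-1}<j_{k-1}$, which is a $2k$-tuple. This correspondence is computable in both directions, given the standard enumeration, since the neighborhood-set labelling $\hat v_j=\hat v_{n,j}$ is computable from the standard copy. So we define
\[
C(A)=J_{2k}(n_0,j_0,\dots,n_{k-1},j_{k-1}).
\]
This is a computable finite coloring.

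Now let $\tilde{\mathbb{H}}$ be any copy of $\mathbb{H}_{n+1}$ inside the standard copy on which all far anticliques of size $k$ receive one color. Consider the family $\mathcal{F}^{\tilde{\mathbb{H}}}_{2k}$ of index sets of far anticliques of size $k$ lying in $\tilde{\mathbb{H}}$. It is computable from $\tilde{\mathbb{H}}$: given a $2k$-tuple, we check that the nodes $\hat v_{j_i}$ lie in $\tilde{\mathbb{H}}$ and that $\hat v_{j_i}\in N_{n_i}$, both of which are decidable from the standard copy. By Theorem~\ref{le4}, $\mathcal{F}^{\tilde{\mathbb{H}}}_{2k}$ is largely $2$-extendable. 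By the choice of $\tilde{\mathbb{H}}$, it is monochromatic with respect to $J_{2k}$. Theorem~\ref{JCLE}(3), applied with $n=2k$ and $\tilde{\mathcal{F}}=[\mathbb{N}]^{2k}$, then shows that $\mathcal{F}^{\tilde{\mathbb{H}}}_{2k}$ computes $\emptyset^{(2k-2)}$. Since $\mathcal{F}^{\tilde{\mathbb{H}}}_{2k}\le_T\tilde{\mathbb{H}}$, so does $\tilde{\mathbb{H}}$. Relativizing to $X$ (coloring by $J^X_{2k}$, and noting that $\tilde{\mathbb{H}}\oplus X$ computes $X^{(2k-2)}$) gives the coding statement in the form required by the definition. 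Here the proof of Theorem~\ref{JCLE}(3) uses only membership in $\mathcal{F}$ together with the $X$-computable approximations.

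For the reverse-mathematics consequence: whenever $2k-2\ge 1$, any $\mathcal{P}$ that realizes the statement and is closed under $\le_T$ contains $X'$ for each $X\in\mathcal{P}$. Hence $\mathcal{P}$ is closed under the Turing jump. The formalized version of this argument (uniformly producing $X'$ from a solution for the $X$-computable instance) gives $ACA_0$ over $RCA_0$. For $k>2$ the code is in fact at least a double jump. I would also check that the formalization of Theorem~\ref{le4} and of Theorem~\ref{JCLE} goes through in $RCA_0$, using $\Sigma^0_1$-induction for the finite-injury-free inductions there.

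The one delicate point, which I expect to be the main obstacle, is making sure that the largely $2$-extendable family extracted from $\tilde{\mathbb{H}}$ is computable in $\tilde{\mathbb{H}}$ and not merely definable. This is exactly why we use all far-anticlique index tuples, rather than the subfamily $\tilde{\mathcal{F}}$ built in the proof of Theorem~\ref{le4}, whose Skolem data need not be computable (compare Lemma~\ref{notcomputable}). Since Theorem~\ref{JCLE} only requires $\mathcal{F}$ itself to be enumerable, this suffices.
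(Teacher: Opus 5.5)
Your proposal is correct and is essentially the paper's proof. You color each far anticlique by $J^X_{2k}$ applied to its $2k$-tuple of indices, take a monochromatic $\tilde{\mathbb{H}}$ via Theorem~\ref{frd1}, note that $\mathcal{F}^{\tilde{\mathbb{H}}}_{2k}$ is $\tilde{\mathbb{H}}$-computable, monochromatic, and largely $2$-extendable by Theorem~\ref{le4}, and conclude with Theorem~\ref{JCLE}(3). Your extra care on two points is accurate and matches what the paper's argument relies on implicitly: using the full computable family rather than the non-effective subfamily $\tilde{\mathcal{F}}$ from the proof of Theorem~\ref{le4}, and noting that the relativized computation really uses $\tilde{\mathbb{H}}\oplus X$.
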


\begin{proof}
	A coloring of $[\mathbb{N}]^{2k}$ induces a coloring on the far anticliques of size $k$ in any copy of $\mathbb{H}_{n+1}$. Color the far anticliques of size $k$ in the standard copy $\mathbb{H}_{n+1}$ via the Jockusch Coloring, $J^X_{2k}$ (Definition~\ref{JC}). Apply Theorem~\ref{frd1} to get a monochromatic $\tilde{\mathbb{H}}$ which is  a copy of $\mathbb{H}_{n+1}$. A coloring on the far anticliques of size $k$ in  $\tilde{\mathbb{H}}$ induces a coloring on the tuples in $\mathcal{F}^{\tilde{\mathbb{H}}}_{2k}$. Hence all the tuples in $\mathcal{F}^{\tilde{\mathbb{H}}}_{2k}$ have the same color w.r.t.\ $J_{2k}$.  By Theorem~\ref{le4}, $\mathcal{F}^{\tilde{\mathbb{H}}}_{2k}$ is a monochromatic largely $2$-extendable family of tuples. By Theorem~\ref{JCLE}, $\mathcal{F}^{\tilde{\mathbb{H}}}_{2k}$  computes  $X^{(2k-2)}$.
\end{proof}

\begin{definition}\label{equiv}
     Let $\forall C_0 \exists H_0 \mathcal{R}_0(C_0,H_0)$ and $\forall C_1 \exists H_1 \mathcal{R}_1(C_1,H_1)$ be two $\Pi^1_2$ sentences which are realized in $\mathcal{P}$.  We say $\forall C_0 \exists H_0 \mathcal{R}_0(C_0,H_0)$ is \emph{strongly Weihrauch reducible over $\mathcal{P}$} to $\forall C_1 \exists H_1 \mathcal{R}_1(C_1,H_1)$ iff there are two Turing functionals functionals $\Phi$ and $\Psi$ such that:
     \begin{itemize}
         \item  $\mathcal{P}$ realizes $$\forall C_0 \forall H_1 ( \mathcal{R}_1(\Phi(C_0),H_1)) \imply \mathcal{R}_0(C_0,\Psi(H_1)).$$  
         \item Both $\Phi(C_0)$ and $\Psi(H_1)$ are total functions on all reasonable oracle (sets) from $\mathcal{P}$.
     \end{itemize}
\end{definition}

 To solve $\mathcal{R}_0$ on $C_0$ is enough to find a solution for  $\mathcal{R}_1$ on $\Phi(C_0)$.  We have that $\Phi(C_0) \leq_T C_0$ and $\Psi(H_1) \leq_T H_1$.  For this reason, we will call $\Phi$ and $\Psi$ reductions.  Hence this is a strong way to show that if $\mathcal{P}$ realizes $\forall C_1 \exists H_1 \mathcal{R}_1(C_1,H_1)$ then $\mathcal{P}$ also realizes $\forall C_0 \exists H_0 \mathcal{R}_0(C_0,H_0)$.  If  $\forall C_0 \exists H_0 \mathcal{R}_0(C_0,H_0)$ is {strongly Weihrauch reducible over $\mathcal{P}$} to $\forall C_1 \exists H_1 \mathcal{R}_1(C_1,H_1)$ and $\forall C_0 \exists H_0 \mathcal{R}_0(C_0,H_0)$ codes $\emptyset^{(k)}$ then so does $\forall C_1 \exists H_1 \mathcal{R}_1(C_1,H_1)$. 

 \begin{theorem}\label{ramsey}
	$RT^l_k$ is strongly Weihrauch reducible to the statement that far anticliques of size $k$ have  the Ramsey property in $\mathbb{H}_{n+1}$.
\end{theorem}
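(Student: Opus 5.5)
The plan is to reduce the coloring of $k$-tuples to a coloring of far anticliques by reading off one coordinate per node, and then to extract a homogeneous set from a monochromatic $\tilde{\mathbb{H}}$ by a greedy search. Here $RT^l_k$ is Ramsey's theorem for $l$-colorings of $[\mathbb{N}]^k$.

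\emph{Forward reduction $\Phi$.} Given $C\colon[\mathbb{N}]^k\to l$, let $\Phi(C)$ color a far anticlique $A$ of size $k$ in the standard copy, with set of indices $n_0<j_0<\cdots<n_{k-1}<j_{k-1}$, by
$$\Phi(C)(A)=C(n_0,n_1,\ldots,n_{k-1}).$$
As noted before Theorem~\ref{le4}, the index set of a far anticlique is computable from the standard enumeration. Since $n_0<n_1<\cdots<n_{k-1}$, this is a legitimate $l$-coloring of far anticliques, computable uniformly in $C$ and total.

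\emph{Backward reduction $\Psi$.} Let $\tilde{\mathbb{H}}$ be a copy in which all far anticliques of size $k$ have one color. Computably in $\tilde{\mathbb{H}}$, build nodes $v_{n_0,j_0},v_{n_1,j_1},\ldots$ greedily. At step $i$, search for a node $v_{n_i,j_i}\in\tilde{\mathbb{H}}$ such that:
\begin{itemize}
\item $n_i>j_{i-1}$ (there is no constraint at $i=0$), and
\item $v_{n_i,j_i}$ is connected to none of the previously chosen nodes.
\end{itemize}
Such a node exists by Lemma~\ref{build}, applied with $H$ equal to the finite set already chosen and with $l=j_{i-1}$. The search is effective because membership in $\tilde{\mathbb{H}}$, the relation $E$, and the neighborhood-set labels are all computable from $\tilde{\mathbb{H}}$ and the standard enumeration. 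Hence the search always halts, and $\Psi$ is total on every copy of $\mathbb{H}_{n+1}$. Output $\Psi(\tilde{\mathbb{H}})=\{n_0<n_1<\cdots\}$, which is an infinite set.

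\emph{Verification.} Take any $i_0<i_1<\cdots<i_{k-1}$. Then
$$n_{i_0}<j_{i_0}<n_{i_1}<j_{i_1}<\cdots<n_{i_{k-1}}<j_{i_{k-1}},$$
because each $n_{i+1}>j_i$ and $n<j$ always holds for $v_{n,j}$. Moreover, the chosen nodes are pairwise non-adjacent. So $\{v_{n_{i_t},j_{i_t}}:t<k\}$ is a far anticlique of size $k$ in $\tilde{\mathbb{H}}$, and its color is $C(n_{i_0},\ldots,n_{i_{k-1}})$. Since $\tilde{\mathbb{H}}$ is monochromatic, every $k$-subset of $\Psi(\tilde{\mathbb{H}})$ receives the same $C$-color. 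Thus $\Psi(\tilde{\mathbb{H}})$ is homogeneous for $C$, which completes the strong Weihrauch reduction.

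\emph{Main obstacle.} The one delicate point is that the greedy step must always succeed effectively, including the requirement that the new neighborhood index exceed every previously used index. I would lean on Lemma~\ref{build}, which supplies an $n>l$, with $n$ above all indices of $H$, and a node $v_{n,j}\in\tilde{\mathbb{H}}$ with no edges to $H$. Since we only need one such node at each step, an unbounded search suffices, and we never need to find the non-computable $n$ of Theorem~\ref{local1}~(3).
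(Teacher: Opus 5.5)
Your proof is correct and essentially the paper's: you use the same neighborhood-set coloring $C(n_0,\ldots,n_{k-1})$, and the same greedy extraction, computable in $\tilde{\mathbb{H}}$, of neighborhoods each lying above all previously used indices. The differences are cosmetic. The paper searches for whole far anticliques of size $k$ (existence via Theorem~\ref{le4}) and keeps only their first node, whereas you search for single nodes directly. Also, your non-adjacency requirement is automatic: $n_i>j_{i-1}$ already forces $v_{j_i}$ to be disconnected from every earlier chosen node.
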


\begin{proof}
	Let $C$ be a coloring of $[\mathbb{N}]^k$.  Every far anticlique $\mathcal{A}$ of size $k$ has an index set, $\{n_0 < j_0 < n_1 < j_1 < \ldots n_{k-1} < l_{k-1}\}$.  Color $\mathcal{A}$ with the color $C(n_0,n_1, \ldots n_{k-1})$. This reduction is total on all colorings $C$.  This is also a \emph{neighborhood set coloring} since it only depends on the neighborhoods.  Apply Theorem~\ref{frd1} to get a homogeneous $\tilde{\mathbb{H}}$ with color $\tilde{C}$.  Effectively in $\tilde{\mathbb{H}}$ and inductively we will construct a set $H$ of neighborhoods. 
    
    Let $\{n_0, j_0, n_1, j_1, \ldots ,  n_{k-1} , l_{k-1}\}$ be the index set for the first  far anticlique of size $k$ greater than $0$.  Such an anticlique exists by Theorem~\ref{le4}. Let $n_0 = n^0 \in H$. If we add $n^i$ to $H$ there is always a corresponding far anticlique $\mathcal{A}$ of size $k$ in $\tilde{\mathbb{H}}$ with index set $\{ n^i, j^i_0, n^i_1, j^i_1, \ldots ,  n^i_{k-1} , l^i_{k-1}\}$.  Given $n^i \in H$, find the first index set, $\{n^{i+1}_0, j^{i+1}_0, n^{i+1}_1, j^{i+1}_1, \ldots ,  n^{i+1}_{k-1} , l^{i+1}_{k-1}\}$, for a far anticlique of size $k$ greater than $l^i_{k-1}$. Again such an anticlique exists by Theorem~\ref{le4}. Let $n^{i+1} \in H$.  This reduction is total on all possible $\tilde{\mathbb{H}}$.
    
    Now take any subset of $H$ of size $k$, $\{ n^{i_0} < n^{i_1} < \ldots < n^{i_{k-1}}\}$. By our careful choices, $\{ n^{i_0} <  j^{i_0}< n^{i_1} < j^{i_1} < \ldots j^{i_{k_1}} < n^{i_{k-1}}  <  n^{i_{k-1}}\}$ is the index set of far anticlique of size $k$ with color $\tilde{C}$.  Hence  $\{ n^{i_0} < n^{i_1} < \ldots < n^{i_{k-1}}\}$ is colored $\tilde{C}$ by $C$. So $H$ is homogenous w.r.t.\ to $C$. 
	\end{proof}

\section{Close Pairs in $\mathbb{H}_3$}

\begin{definition}
	A pair of nodes, $v_{n_0,j_0}$ and $v_{n_1,j_1}$, are \emph{close} iff $n_1 < n_0 < j_0 < j_1$ (so all indexes are closely bound by $j_1$). 
	\end{definition}

 Close pairs may be connected or not.  For what follows it does not matter which, but we have to fix one of the two. From now on we work with (dis)connected close pairs. For far anticliques $n_i$ are increasing as $i$ increases. Here and from now on the $n_i$ are decreasing as $i$ increases.  
 
 Let $v_{l_0} \in \tilde{\mathbb{H}}$.   By Theorem~\ref{local1} (4), there is a node $v_n \in \tilde{\mathbb{H}}$ and infinitely many nodes $v_{n_i,j_i} \in \tilde{\mathbb{H}}$, where $l_0 < n_i \leq n$.  Let $\mathcal{F}^{\tilde{\mathbb{H}}}$ be the set of triples $(n_0,j_0,j_1)$  where $ v_{n_0,j_0}, v_{n_1,j_1} $ are a (dis)connected close pair in $\tilde{\mathbb{H}}$ and $n_0 > n$.  $\mathcal{F}^{\tilde{\mathbb{H}}}$  is computable in $\tilde{\mathbb{H}}$ and $n$ (which is just finitely much new information). So $\mathcal{F}^{\tilde{\mathbb{H}}}$  is computable in $\tilde{\mathbb{H}}$. 

\begin{theorem}%[Large persistence of close pairs]
\label{lpp}
	 $\mathcal{F}^{\tilde{\mathbb{H}}}$ is largely $2$- extendable.  
\end{theorem}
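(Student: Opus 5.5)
The plan is to verify the three clauses of Definition~\ref{def.l2e} directly for the triples $(n_0,j_0,j_1)\in\mathcal{F}^{\tilde{\mathbb{H}}}$. The point is that $\mathcal{F}^{\tilde{\mathbb{H}}}_2$ turns out to be exactly the set of pairs $(n_0,j_0)$ with $v_{n_0,j_0}\in\tilde{\mathbb{H}}$ and $n_0>n$, where $n$ is the number fixed before the statement via Theorem~\ref{local1}~(4). I would first prove the key claim that every such pair has large $1$-point extensions, which is clause (3). Clauses (1) and (2) then follow from Theorem~\ref{local1}.

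\emph{Key step (clause (3)).} Fix $v_{n_0,j_0}\in\tilde{\mathbb{H}}$ with $n_0>n$. Put $H_0=\{v_{n_0,j_0}\}$. Since this node lies in the neighborhood set $N_{n_0}$ with $n_0>n$, it is not in the first $n+1$ neighborhood sets, so Lemma~\ref{extdown} applies with this $n$. Let $H_1$ be the two-node graph consisting of $H_0$ together with one new node, joined to $v_{n_0,j_0}$ by an edge exactly when we are working with connected close pairs. There is no clique issue, since $H_1$ has only two nodes. Lemma~\ref{extdown} then yields infinitely many nodes $v_{n_i,j_i}\in\tilde{\mathbb{H}}$ with $n_i\le n<n_0$ such that $\{v_{n_0,j_0},v_{n_i,j_i}\}$ has the prescribed (dis)connection. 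These nodes are distinct, so $j_i\to\infty$. Hence for every $l$ there is such a node with $j_i>\max(l,j_0)$, and then $n_i<n_0<j_0<j_i$, so the pair is close. Thus $(n_0,j_0,j_i)\in\mathcal{F}^{\tilde{\mathbb{H}}}$ with $j_i>l$. This shows that every pair $v_{n_0,j_0}\in\tilde{\mathbb{H}}$ with $n_0>n$ lies in $\mathcal{F}^{\tilde{\mathbb{H}}}_2$ and is not a stub. Conversely, every element of $\mathcal{F}^{\tilde{\mathbb{H}}}_2$ is of this form, so clause (3) holds. Since the tuples are triples, no appeal to Lemma~\ref{le1} is needed.

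\emph{Clauses (1) and (2).} Apply Theorem~\ref{local1}~(3) with $l_0:=n$. This gives some $n_0>n$ such that for every $l_1$ there is $j_0>l_1$ with $v_{n_0,j_0}\in\tilde{\mathbb{H}}$. By the key step, each such $(n_0,j_0)$ lies in $\mathcal{F}^{\tilde{\mathbb{H}}}_2$, so $x_0=n_0$ witnesses clause (2). For clause (1), apply Theorem~\ref{local1}~(3), or simply~(2), with arbitrarily large $l_0\ge n$. This produces infinitely many distinct first coordinates $n_0>n$ of nodes of $\tilde{\mathbb{H}}$, each of which starts a triple in $\mathcal{F}^{\tilde{\mathbb{H}}}$ by the key step. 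Hence $\mathcal{F}^{\tilde{\mathbb{H}}}_1$ is infinite.

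The main obstacle, and the reason $n$ was built into the definition of $\mathcal{F}^{\tilde{\mathbb{H}}}$, is clause (3). We need the downward extensions to land in strictly lower neighborhood sets, namely $n_i<n_0$. Lemma~\ref{extdown} guarantees $n_i\le n$, so requiring $n_0>n$ is exactly what forces closeness. I would check carefully that the hypothesis of Lemma~\ref{extdown} ("not in the first $n+1$ neighborhood sets") is met by $n_0>n$, and that the $n$ used in defining $\mathcal{F}^{\tilde{\mathbb{H}}}$ is the one that Lemma~\ref{extdown} takes from Theorem~\ref{local1}~(4). The latter needs a lower bound $l_0$, but any choice works for this argument.
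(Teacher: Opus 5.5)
Your proof is correct and follows essentially the same route as the paper. You use Lemma~\ref{extdown} (the paper also allows the extension property directly) to get downward one-point extensions connected to $v_n$, which forces $n_1\le n<n_0$ and hence closeness, giving clause~(3). Clauses~(1) and~(2) then come from Theorem~\ref{local1}~(2) and~(3), exactly as in the paper.
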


\begin{proof}
	Consider a node $v_{n_{0},j_{0}} \in \tilde{\mathbb{H}}$ where $n < n_{0}$.  For all such nodes and for all $l_0 > j_0$, by the extension property or Lemma~\ref{extdown}, there is another node $v_{n_{1},j_{1}} \in \tilde{\mathbb{H}}$ such that  $v_{n_{0},j_{0}}$ and $v_{n_{1},j_{1}}$ are (dis)connected, $v_{n_{1},j_{1}}$ and $v_n$ are connected (so $n_{1} \leq n < n_{0}$), and $l_0 < j_{1}$.  Hence the triple $(n_0,j_0,j_1)$ is in $\mathcal{F}^{\tilde{\mathbb{H}}}$.  
	
	By Theorem~\ref{local1} (2) there are infinitely many $n_{0} > n$ such that there is a $j_{0}$ where  $v_{n_{0},j_{0}} \in \tilde{\mathbb{H}}$. By Theorem~\ref{local1} (3) there is an $n_{0} > n$ such that there are infinitely many  $j_{0}$ where  $v_{n_{0},j_{0}} \in \tilde{\mathbb{H}}$. 	
\end{proof}

The following theorem for close connected pairs is a special case of Theorem 8.9 in \cite{DobrinenJML20}.  
For close disconnected pairs, the theorem is a special case of Theorem 4.5 in \cite{DobrinenH_3ExactDegrees20};
 it also follows from
 from the conjunction of  Theorem 8.9 in \cite{DobrinenJML20} and 
Theorem 3.4.12 in \cite{Balko7}. 

Note that this theorem only holds for $\mathbb{H}_{3}$.  We have more to say about close pairs in $\mathbb{H}_{n+1}$, when $n\ge 3$, in the next section.  In the notation of the next section, in $\mathbb{H}_{3}$, a close pair only has one diary and hence the next theorem holds. Far anticliques, close pairs in $\mathbb{H}_{3}$ and nodes  in $\mathbb{H}_{n+1}$ are the only cases where there is only one diary.

\begin{theorem}[close (dis)connected pairs have the Ramsey Property in $\mathbb{H}_{3}$]\label{frd3}
	For any finite coloring of close (dis)connected pairs in our standard copy $\mathbb{H}_{3}$, there is a copy $\tilde{\mathbb{H}}$ of $\mathbb{H}_{3}$ inside the standard copy of $\mathbb{H}_{3}$ where all close (dis)connected pairs have the same color. 
\end{theorem}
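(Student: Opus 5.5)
Theorem~\ref{frd3} is a special case of known results (Theorem 8.9 of \cite{DobrinenJML20}, Theorem 4.5 of \cite{DobrinenH_3ExactDegrees20}). The plan is to recover it in the style of Section~\ref{proof:Indivisibility}, as a forcing-type construction built on indivisibility. Fix a finite coloring $c$ of close (dis)connected pairs $\{v_{n_0,j_0},v_{n_1,j_1}\}$ with $n_1<n_0<j_0<j_1$. We first pass to a copy in which the color depends on less data. By repeated use of Theorem~\ref{Indivisibility} together with Theorem~\ref{local1}, we may assume we work inside a copy $\widehat{\mathbb{H}}$. In $\widehat{\mathbb{H}}$ we fix a node $v_n$ as in Theorem~\ref{local1}(4), and we restrict attention to pairs whose lower node $v_{n_1,j_1}$ is connected to $v_n$ (so $n_1\le n$), as in the definition of $\mathcal{F}^{\tilde{\mathbb{H}}}$.

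The core step is a one-dimensional reduction. For each fixed lower-end node $v_{n_0,j_0}$, color every later node $w=v_{n_1,j_1}$ that forms a close (dis)connected pair with it by $c(v_{n_0,j_0},w)$. Indivisibility applied to these \emph{neighbor-set relative} node colorings gives, for each $v_{n_0,j_0}$, a color $\chi(v_{n_0,j_0})$ realized densely, in the sense of the extension property, among its admissible partners. This is exactly the RED/BLUE dichotomy from the $n=2$ proof: either the color is extendable for every $D,F$, or there is a witness pair $(D_i,d_i)$ beyond which the complementary color is forced. We then color nodes by $\chi$ and apply Theorem~\ref{Indivisibility} once more. This yields a copy on which $\chi$ is constant, say with value $C$.

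Next we build $\tilde{\mathbb{H}}$ by the greedy algorithm of Subsection~\ref{greedy}, interleaved as in Section~\ref{proof:Indivisibility}. At stage $s+1$ we must choose $v_{s+1}$ with the prescribed edges to $v_0,\dots,v_s$. For every earlier $v_l$ that will form a close pair with $v_{s+1}$, the pair must receive color $C$. Membership in a close pair is decided by the neighborhood-set indices. So we will arrange $v_{s+1}$'s neighborhood set, via Lemma~\ref{extdown} (going downward, connected to $v_n$) or Lemma~\ref{build} (going upward), so that at most the intended pairs are close. We then choose $v_{s+1}$ inside the intersection of the sets $N_{D,d}$ witnessing color $C$ for each such $v_l$.

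The main obstacle is this simultaneous step: a new node may be close to several earlier nodes, and we need one node lying in all the $C$-dense regions at once. Triangle-freeness is what should make this work. As in the $n=2$ proof, the union $D$ of the witness sets stays edge-free, so the extension property supplies a common node. If this fails, the fallback is to control the enumeration so that each new node is close to at most one earlier node. This can be done by taking neighborhood indices strictly increasing except for one designated partner, mimicking the far-anticlique bookkeeping of Theorem~\ref{le4}. The construction then reduces to the single-pair case handled above.
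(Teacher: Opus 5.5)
You have a genuine gap, and it is exactly the step you call the main obstacle. For comparison, the paper gives no proof of its own here: it quotes Theorem 8.9 of \cite{DobrinenJML20} and Theorem 4.5 of \cite{DobrinenH_3ExactDegrees20}, whose core is a Halpern--L\"auchli-type theorem for strong coding trees proved by forcing.

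The colour of a close pair depends on both of its nodes. So knowing, for each earlier $v_l$ separately, that its $C$-partners are dense says nothing about one node that is a $C$-partner of $v_{l_1}$ and of $v_{l_2}$ at once. Keeping $D$ edge-free and invoking the extension property produces a node with the right \emph{edges}, not one with the right \emph{colours}. For instance, let $c(u,w)=0$ iff $\epsilon(u)=\epsilon(w)$, where $\epsilon$ is a $2$-colouring of nodes both of whose classes meet every extension region. Then both colours are dense among the partners of every node, so $\chi$ is constant and carries no information. Nothing in your construction stops you from choosing $v_{l_1},v_{l_2}$ with $\epsilon(v_{l_1})\neq\epsilon(v_{l_2})$, and after that no later node is a $0$-partner of both.

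The forcing (BLUE-type) witnesses do not combine either. For $d_{l_1}\le d_{l_2}$, the region $N_{D_{l_2},d_{l_2}}$ forbids edges to all of $\{0,\dots,d_{l_2}\}\setminus D_{l_2}$. Hence $N_{D_{l_1},d_{l_1}}\cap N_{D_{l_2},d_{l_2}}=\emptyset$ unless $D_{l_2}\cap\{0,\dots,d_{l_1}\}=D_{l_1}$, and the dichotomy does not arrange this. In the proof of Section~\ref{proof:Indivisibility} this never arises, because each new node must lie in exactly one witness region, the one attached to its own neighbourhood set.

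The same simultaneity reappears in your $\chi$-step. Since $\chi$ is computed in the ambient graph, $v_{s+1}$ must satisfy $\chi(v_{s+1})=C$ and $c(v_l,v_{s+1})=C$ for all relevant $l$ together, and nothing guarantees such a node exists. Also, Theorem~\ref{Indivisibility} cannot literally be applied to the partner set of $v_{n_0,j_0}$. That set lies in $N_0\cup\dots\cup N_{n_0-1}$, which by Theorem~\ref{local1}(1) contains no copy of $\mathbb{H}_3$; only the RED/BLUE dichotomy makes sense there.

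Your fallback cannot work, because no copy of $\mathbb{H}_3$ avoids nodes that are close to two earlier nodes. Given any $\tilde{\mathbb{H}}$, let $n$ be as in Theorem~\ref{local1}(4). Use Lemma~\ref{build} twice to get disconnected $u_1,u_2\in\tilde{\mathbb{H}}$ lying in neighbourhood sets beyond $n$. Lemma~\ref{extdown} then gives infinitely many $w=v_{n_i,j_i}\in\tilde{\mathbb{H}}$ with $n_i\le n$ that are connected to both $u_1,u_2$ (or to neither). Once $j_i$ exceeds the indices of $u_1,u_2$, both $\{u_1,w\}$ and $\{u_2,w\}$ are close connected (resp.\ disconnected) pairs.

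So the simultaneous step is forced in every copy. Handling it needs a genuinely two-dimensional ingredient for the tree of $1$-types, such as the Halpern--L\"auchli/Milliken-style theorem for coding trees behind the cited results. Iterated node indivisibility plus the greedy algorithm, as your plan uses, does not supply it.
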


The proof of the following theorem is like the proof of Theorem~\ref{CodesJump}. 

\begin{theorem}\label{vcp0}
	The statement that ``close (dis)connected pairs have the Ramsey Property in $\mathbb{H}_3$" codes $\emptyset'$ and implies $ACA$ over $RCA_0$. 
\end{theorem}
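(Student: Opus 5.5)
We follow the proof of Theorem~\ref{CodesJump}, with Theorem~\ref{lpp} in place of Theorem~\ref{le4} and Theorem~\ref{frd3} in place of Theorem~\ref{frd1}. Fix $X$.

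\emph{Step 1: the coloring.} A close pair $v_{n_0,j_0},v_{n_1,j_1}$ has $n_1<n_0<j_0<j_1$. We color it by the triple $(n_0,j_0,j_1)$ using the Jockusch coloring $J^X_3$ of Definition~\ref{JC}, i.e.\ by $J^{X,3}(n_0,j_0,j_1)$. This coloring is computable in $X$. The index $n_1$ is simply ignored; it is recoverable from $j_1$ by the structure of the standard enumeration.

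\emph{Step 2: the monochromatic family.} Theorem~\ref{frd3} gives a copy $\tilde{\mathbb{H}}$ of $\mathbb{H}_3$ in which all close (dis)connected pairs have the same color. We fix $v_{l_0}\in\tilde{\mathbb{H}}$ and take $n$ from Theorem~\ref{local1}(4). The family $\mathcal{F}^{\tilde{\mathbb{H}}}$ of triples $(n_0,j_0,j_1)$ coming from close pairs with $n_0>n$ is then computable in $\tilde{\mathbb{H}}$; this is nonuniform only in the finite parameter $n$, which does not affect Turing reducibility. Every triple in it is the index triple of a close pair in $\tilde{\mathbb{H}}$, so $\mathcal{F}^{\tilde{\mathbb{H}}}$ is monochromatic with respect to $J^X_3$. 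By Theorem~\ref{lpp} it is largely $2$-extendable.

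\emph{Step 3: computing the jump.} Theorem~\ref{JCLE} with $n=3$ now shows that $\mathcal{F}^{\tilde{\mathbb{H}}}$, and hence $\tilde{\mathbb{H}}$, computes $X'$. So the statement codes $\emptyset'$. Applying this to every $X$ in a Turing-closed $\mathcal{P}$ that realizes the statement shows that $\mathcal{P}$ is closed under the jump, which gives $ACA_0$ over $RCA_0$.

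\emph{Main obstacle.} The one point to check is that Theorem~\ref{JCLE} only uses membership in $\mathcal{F}$ together with the largely $2$-extendable property. It never uses that the tuples form all of $[H]^3$. This matters because our triples $(n_0,j_0,j_1)$ are tuples in increasing order drawn from a structured subfamily of $[\mathbb{N}]^3$, not all triples. A second point is that the nonuniform choice of $n$ is harmless here: it affects Weihrauch uniformity but not Turing reducibility. Everything else is direct bookkeeping.
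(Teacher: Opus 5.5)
Your proposal is correct and follows the paper's route: the paper proves this theorem by analogy with Theorem~\ref{CodesJump}. That is, color close pairs by $J^X_3$ of the index triple $(n_0,j_0,j_1)$, take a monochromatic $\tilde{\mathbb{H}}$ from Theorem~\ref{frd3}, note that $\mathcal{F}^{\tilde{\mathbb{H}}}$ is computable in $\tilde{\mathbb{H}}$ (up to the finite parameter $n$) and largely $2$-extendable by Theorem~\ref{lpp}, and conclude with Theorem~\ref{JCLE}(3).
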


\section{Finite graphs $G$}\label{finite}

Let $G$ be a finite $(n + 1)$-clique free graph. As we mentioned above, only if  $G$ is  a single node will it have the Ramsey Property in $\mathbb{H}_{n+1}$.  However, $G$ does have an exact finite big Ramsey degree in $\mathbb{H}_{n+1}$. That is, for each $G$ and $n$, there is a $k({G,n})$ such that for all finite colorings of copies of $G$ in $\mathbb{H}_{n+1}$ there is an $\tilde{\mathbb{H}}$ where at most $k({G,n})$ colors appear among the copies of $G$ in $\tilde{\mathbb{H}}$.  Moreover, for some coloring and all corresponding  $\tilde{\mathbb{H}}$, all $k({G,n})$ colors are needed. It turns out that the sentence 
``$G$ has 
finite
% exact
big Ramsey degree $k(G,n)$ in $\mathbb{H}_{n+1}$'' codes $\emptyset^{(|G|-1)}$.
But there are a number of things to prove before we get to this result.  The proof more or less follows the outline from the above two sections.  The tricky part is computably exacting from $\tilde{\mathbb{H}}$ the needed monochromatic largely $2$-extendable set. 

The exact big Ramsey degree of  $G$ in $\mathbb{H}_{n+1}$ is determined by the number  of essential patterns for copies of $G$ that persist in each subcopy of $\mathbb{H}_{n+1}$.
These patterns are called {\em diaries}, denoted by $\Delta$.
A diary for a finite graph $G$ is an 
enlargement of $G$ that notes essential properties about how $G$ sits inside $\mathbb{H}_{n+1}$.
Roughly, this includes the number of distinct neighborhoods in which $G$ lives and the first instances 
where subsets (with size between $1$ and $n$) of nodes in $G$ have
edges with all nodes in some  $k$-clique  in $\mathbb{H}_{n+1}$, where
  $1\le k< n$.

 Diaries for $\mathbb{H}_3$ have simple descriptions, appearing in 
\cite{DobrinenH_3ExactDegrees20}, \cite{Dobrinen_ICM}, \cite{Balko7}.
More generally,
diaries were 
defined abstractly  for a wide collection of 
free amalgamation classes with finitely many binary relations
in Definition 3.4.1 of \cite{Balko7}\footnote{This definition takes several pages.  For subgraphs of $\mathbb{H}_{n+1}$,
 Definition 9.4 in \cite{DobrinenJML23}, on which many ideas of  \cite{Balko7} were built, 
is much shorter.  That approach is modified in Definition \ref{def.diary} in the Appendix.}
 without reference to any particular copy of $\mathbb{H}_{n+1}$.
 In the Appendix, 
we give a simpler definition of diary for the  $(n+1)$-clique free graphs (see Definition \ref{def.diary}), constructible from the enumeration on $\mathbb{H}_{n+1}$.  In
 Theorem \ref{thm.diary}, we construct a particular diary, denoted $\mathbb{D}_{n+1}$,
 which  is   a subcopy $\widehat{\mathbb{H}}_{n+1}$ of $\mathbb{H}_{n+1}$ 
with some additional computable definable structure from $\mathbb{H}_{n+1}$ and its enumeration. The finite diaries $\Delta$ are substructures of the larger diary $\mathbb{D}_{n+1}$, so computably definable from $\mathbb{H}_{n+1}$.
The important point here is that  we may work in $\mathbb{D}_{n+1}$ or $\widehat{\mathbb{H}}_{n+1}$  interchangeably. 
Inside our standard enumeration $\mathbb{H}_{n+1}$ there are   copies of  $G$ that do not correspond to any diary. However, 
 every $G$ in $\widehat{\mathbb{H}}_{n+1}$  
 corresponds to exactly one diary. 
 In this section we will assume that we are coloring  graphs in $\widehat{\mathbb{H}}_{n+1}$ and that $\tilde{\mathbb{H}}$ is a copy of ${\mathbb{H}}_{n+1}$ inside $\widehat{\mathbb{H}}_{n+1}$.

We need the following definition:

\begin{definition}\label{defclose}
     A copy of a graph $G$, $\{v_{n_0,j_0},\dots, v_{n_{k-1},j_{k-1}}\}$, is {\em close} if $n_0 < j_0$, $\max \{ n_i \} = n_0$, and $j_0 < j _1 < \ldots <j_{k-1}$.   
\end{definition}

This definition extends the definition of close pairs. There is no implied order on the $n_i$'s other than  $n_0$ is the largest. Fix a finite  $(n+1)$-clique free $G$.  It follows from  Definition 3.4.1   and Theorem 3.4.12 in \cite{Balko7} that in every copy of $\mathbb{H}_{n+1}$ contains    copies of $G$ that are close. However, a much simpler proof of this fact appears in the proof of  Theorem \ref{thm.closele} below.  We will really only need that a copy of a  graph is close in the proof of Theorem~\ref{thm.closele}.  %, rather than referring to diaries. 
Determining whether a copy of $G$ in $\tilde{\mathbb{H}}$ is close is uniformly computable, just by looking at the indexing. 

%From each diary for $G$ we need to extract a tuple.
If  $\Delta$ is a diary for $G$,
then the maximal nodes in $\Delta$ form a copy of $G$.
%Inside each $\Delta$ there is a copy of $G$, $\{v_{j_0},\dots,v_{j_{|G|-1}}\}$.
Depending on the diary, this copy may or may not be close.  I.e., for some diaries $\Delta$,
all graphs with that diary are close;
for other diaries, all graphs with that diary are not close.  It is easy to see which is which, again by looking the indexing. 
We can assume the indices of the nodes in a particular copy of  $G$ are  $\{j_0<j_1<\dots<j_{|G|-1}\}$. 
Let $m$ be the greatest among the $n_i$.
Here we are interested in the $|G|+1$-tuple $\{m,j_0,j_1,\dots,j_{|G|-1}\}$.
When our copy of $G$ is close, then  $j_0> m$ and 
 $m=n_0$, so
this tuple is $\{n_0,j_0,j_1,\dots,j_{|G|-1}\}$.  

The next theorem 
is proved at the end of the Appendix, Section~\ref{appendix}.

\begin{theorem}\label{dairies}
There is a computable function $k(G,n)$ such that there are $k(G,n)$ diaries corresponding to copies of $G$ in $\mathbb{H}_{n+1}$. 
There is a  copy $\widehat{\mathbb{H}}_{n+1}$, computable in $\mathbb{H}_{n+1}$,  so that 
each copy of $G$ in $\widehat{\mathbb{H}}_{n+1}$, and hence in any subcopy $\tilde{\mathbb{H}}$, corresponds to exactly one diary. Moreover, there is a computable function $D(G,\tilde{\mathbb{H}})$ which outputs this diary  (using $\tilde{\mathbb{H}}$ as an oracle).  In particular, diaries for close copies of $G$ are different from the diaries for copies of $G$ which are not close. 
\end{theorem}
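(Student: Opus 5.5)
Everything rests on the Appendix definition of diary (Definition~\ref{def.diary}) and on Theorem~\ref{thm.diary}, which builds the large diary $\mathbb{D}_{n+1}$ sitting on a subcopy $\widehat{\mathbb{H}}_{n+1}$. I will take both as given. The plan then has three parts: count the diaries, construct $\widehat{\mathbb{H}}_{n+1}$ so that every copy of $G$ in it has a unique diary, and read off that diary computably.

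\textbf{Step 1: computability of $k(G,n)$.} A diary for $G$ is a finite combinatorial object. It consists of a finite ordered set of levels recording where nodes of $G$ appear. It also records where new neighborhood sets begin, and where a subset of at most $n$ nodes first becomes joined to all nodes of some $k$-clique, $1\le k<n$. The size of such an object is bounded by a computable function of $|G|$ and $n$. So the candidate diaries can be listed effectively, and for each candidate one decides by a finite check whether it satisfies the clauses of Definition~\ref{def.diary} and carries a copy of $G$ on its maximal nodes. Define $k(G,n)$ as the number of candidates that pass; this is computable.

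\textbf{Step 2: constructing $\widehat{\mathbb{H}}_{n+1}$.} I build it by a greedy construction (Section~\ref{greedy}), computable in $\mathbb{H}_{n+1}$, with extra genericity constraints. When $v_{s+1}$ is chosen to copy $\hat v_{s+1}$, I also require that at most one ``event'' happens at its level. The permitted events are: a new neighborhood set is started; a new node is placed; a new $k$-clique pattern first appears over earlier nodes. This is exactly the construction of $\mathbb{D}_{n+1}$ in Theorem~\ref{thm.diary}, so $\widehat{\mathbb{H}}_{n+1}$ is its underlying graph. Such choices always exist by the extension property together with Lemmas~\ref{build} and~\ref{extdown}, which supply nodes far above any threshold.

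\textbf{Step 3: uniqueness and computability of the diary.} The genericity from Step 2 means that for any finite $G'\subseteq\widehat{\mathbb{H}}_{n+1}$, the ordered record of events restricted to the indices of $G'$ is well defined. I mean the indices of $G'$, its neighborhood labels $n_i$, and the finitely many witnesses to first clique-joins below $\max G'$. Call this record $\Delta(G')$; it satisfies Definition~\ref{def.diary}, so it is a diary, and it is the only diary, since the definition is deterministic on the record. The function $D(G,\tilde{\mathbb{H}})$ computes $\Delta$ by scanning indices below $\max G'$ and querying $E$ and the enumeration. The oracle $\tilde{\mathbb{H}}$ is used only to know which copy we are in; since $\tilde{\mathbb{H}}\subseteq\widehat{\mathbb{H}}_{n+1}$, the diary is inherited.

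\textbf{Closeness and the main obstacle.} For the final clause, closeness (Definition~\ref{defclose}) is read off from the relative order of $n_0=\max n_i$ and $j_0$ inside $\Delta$. Two diaries that differ in this order are therefore distinct.

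The main obstacle is the genericity in Step 2. Keeping clique-join events at separate levels while still copying every $\hat v_{s+1}$ requires an extension node that avoids creating a $K_{n+1}$ and simultaneously avoids unintended new clique-joins. I would first try to choose $v_{s+1}$ inside a fresh neighborhood set above all previous witnesses, using Lemma~\ref{build}. I would then separately insert, at distinct later levels, the witness nodes needed for any clique-joins that the copying forces.
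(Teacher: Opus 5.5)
Your proposal follows essentially the same route as the paper. $k(G,n)$ is computable because diaries are finite objects fixed by Definition~\ref{def.diary}. The existence of $\widehat{\mathbb{H}}_{n+1}$, the uniqueness of the diary and the computability of $D(G,\tilde{\mathbb{H}})$ are read off from (1)--(4) of Theorem~\ref{thm.diary}. Closeness, which amounts to $n_0=\max_i n_i$ (the first vertex has the highest spine-splitting level) rather than a comparison with $j_0$, is determined by the order of levels the diary records.

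Because you take Theorem~\ref{thm.diary} as given, your ``main obstacle'' is already handled there. The remedy you sketch would not work on its own. Lemma~\ref{build} only yields vertices with no edges to the existing finite graph, so it cannot copy $\hat v_{s+1}$. Your list of events also omits the pairwise-splitting, consecutiveness and reduction clauses of Definition~\ref{def.diary}, which the paper's splitting, age-change and reducing intervals are built to secure.
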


In terms of the Ramsey Property for diaries needed here, Theorem 9.9 in \cite{DobrinenJML23} suffices for our uses in this section, but in order not to introduce more  definitions we state the following special case of Theorem 5.1.5  
in \cite{Balko7}.

\begin{theorem}[Ramsey Property for diaries in $\mathbb{H}_{n+1}$]\label{thm.FG}
	For all finite $G$ and any diary $\Delta$ of $G$, for any finite coloring of all copies of $\Delta$ in  $\widehat{\mathbb{H}}_{n+1}$, there is a copy $\tilde{\mathbb{H}}$ of $\widehat{\mathbb{H}}_{n+1}$  in which  all copies of $\Delta$ have the same color. 
\end{theorem}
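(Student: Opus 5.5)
The plan is to derive this statement from the general Ramsey theorems for diaries already in the literature rather than to reprove it. The first step is to identify the notion of diary used in this paper (Definition~\ref{def.diary} in the Appendix, realized inside the structure $\mathbb{D}_{n+1}$ of Theorem~\ref{thm.diary}) with the abstract diaries of Definition~3.4.1 in \cite{Balko7}. Equivalently, one can identify it with the strict similarity types of antichains in the coding trees of 1-types from Definition~9.4 of \cite{DobrinenJML23}. Concretely, I would take the copy $\widehat{\mathbb{H}}_{n+1}$ from Theorem~\ref{dairies}. To each node $v$ I would attach its sequence of 1-types over the earlier nodes, which gives the tree of passing numbers. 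I would then check that a finite substructure $\Delta$ of $\mathbb{D}_{n+1}$ determines exactly the meet-closure and level data (splitting levels, coding levels, and the first levels where subsets of nodes witness edges with $k$-cliques) that the abstract definition records. Once that dictionary is in place, "a copy of $\Delta$ in $\widehat{\mathbb{H}}_{n+1}$" becomes "an antichain of coding nodes of a fixed diagonal similarity type".

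The second step is to apply the Milliken-type theorem for (diagonal) coding trees, namely Theorem~5.1.5 of \cite{Balko7} or Theorem~9.9 of \cite{DobrinenJML23}. Given a finite coloring of the copies of $\Delta$, this theorem yields a subtree $T$, isomorphic as a coding tree to the one coding $\widehat{\mathbb{H}}_{n+1}$, on which all antichains of that type receive the same color. Its proof is a forcing-style Halpern--L\"auchli argument on level sets followed by a fusion, in the same spirit as the proof of indivisibility in Section~\ref{proof:Indivisibility}. The coding nodes of $T$ then induce a subgraph $\tilde{\mathbb{H}}$, and I would verify that it satisfies the extension property. This uses the fact that the coding subtree preserves all passing-number configurations, i.e., it is itself a diagonal coding tree for $\mathbb{H}_{n+1}$. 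Hence $\tilde{\mathbb{H}}$ is a copy of $\widehat{\mathbb{H}}_{n+1}$.

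Finally, I would check that every copy of $\Delta$ inside $\tilde{\mathbb{H}}$, as computed by $D(G,\tilde{\mathbb{H}})$ from Theorem~\ref{dairies}, corresponds to an antichain of the fixed type in $T$. This needs the diary of a copy of $G$ in $\tilde{\mathbb{H}}$ to be computed from the indexing of $\widehat{\mathbb{H}}_{n+1}$ and not re-indexed relative to $\tilde{\mathbb{H}}$. I expect the main obstacle to be the first step: matching the simplified Appendix definition with the several-page abstract definition in \cite{Balko7}, including the treatment of $k$-clique witnesses for $1\le k<n$. I would handle this first by verifying that the two notions induce the same equivalence relation on copies of $G$ in $\widehat{\mathbb{H}}_{n+1}$. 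Since both notions yield $k(G,n)$ classes, it would suffice to show that one refines the other.
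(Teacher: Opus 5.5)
Your proposal takes the same route as the paper. The paper gives no independent argument for Theorem~\ref{thm.FG}; it states it as a special case of Theorem~5.1.5 of \cite{Balko7}, noting that Theorem~9.9 of \cite{DobrinenJML23} would also suffice, and it takes for granted the dictionary between Definition~\ref{def.diary} and the abstract diaries, which you correctly flag as the real work.

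One caution: your shortcut ``both notions yield $k(G,n)$ classes, so it suffices that one refines the other'' depends on the two counts being equal. The paper gets the count for the Appendix notion only by appeal to \cite{Balko7}, so you should verify the equivalence of the two notions directly rather than through the count.
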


From now on we can equate $\widehat{\mathbb{H}}_{n+1}$ and $\mathbb{H}_{n+1}$.  Applying the above theorem $k(G,n)$ times yields the following corollary:

\begin{corollary}
     Let $\Delta_i$, for $i<k(G,n)$, list all the diaries for $G$. Then $$\mathcal{P}=2^\mathbb{N} \models \forall C \exists \tilde{\mathbb{H}} \bigwedge_{i<k} \mathcal{R}_{\Delta_i,\mathbb{H}_{n+1}}(C,\tilde{\mathbb{H}}).$$
\end{corollary}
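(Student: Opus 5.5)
The plan is to apply Theorem~\ref{thm.FG} once for each of the $k=k(G,n)$ diaries, iterating inside the copy produced at the previous step. Fix a finite coloring $C$ of the copies of $G$ in $\widehat{\mathbb{H}}_{n+1}$, which we identify with $\mathbb{H}_{n+1}$. By Theorem~\ref{dairies}, every copy of a diary $\Delta_i$ determines a unique copy of $G$, namely its maximal nodes. So $C$ induces a finite coloring $C_i$ of the copies of $\Delta_i$, obtained by giving a copy of $\Delta_i$ the $C$-color of its copy of $G$. Each $C_i$ is computable in $C$ together with the standard enumeration, since diaries are computably definable from $\mathbb{H}_{n+1}$ and its enumeration.

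The first step is to set $\tilde{\mathbb{H}}^{0}=\widehat{\mathbb{H}}_{n+1}$. Given $\tilde{\mathbb{H}}^{i}$, a copy of $\mathbb{H}_{n+1}$ inside $\widehat{\mathbb{H}}_{n+1}$, I would apply Theorem~\ref{thm.FG} to $C_i$ restricted to the copies of $\Delta_i$ inside $\tilde{\mathbb{H}}^{i}$. This yields $\tilde{\mathbb{H}}^{i+1}\subseteq\tilde{\mathbb{H}}^{i}$ in which all copies of $\Delta_i$ receive one color.

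To apply Theorem~\ref{thm.FG} inside $\tilde{\mathbb{H}}^{i}$ rather than inside $\widehat{\mathbb{H}}_{n+1}$ itself, I need to check that diary membership is inherited by subcopies. Theorem~\ref{dairies} states that each copy of $G$ in any subcopy corresponds to exactly one diary, and that this diary is computed by $D(G,\tilde{\mathbb{H}})$. The main obstacle is that the diary structure is defined relative to the ambient enumeration. To use Theorem~\ref{thm.FG} in $\tilde{\mathbb{H}}^{i}$, I would transport the coloring along an isomorphism $\widehat{\mathbb{H}}_{n+1}\cong\tilde{\mathbb{H}}^{i}$ that respects the enumeration-defined diary structure, in the sense that the copy is itself a diary copy in the sense of the Appendix. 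Then I would pull the monochromatic copy back.

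If this transport is awkward, I would use a fallback. Monochromaticity of copies of $\Delta_j$ is hereditary to subcopies: a subcopy of a copy in which all copies of $\Delta_j$ share a color still has only that color on $\Delta_j$, because any copy of $\Delta_j$ in the subcopy is a copy of $\Delta_j$ in the larger copy. So each step keeps what earlier steps achieved.

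After $k$ steps, $\tilde{\mathbb{H}}=\tilde{\mathbb{H}}^{k}$ is monochromatic for each $\Delta_i$ simultaneously, so $\bigwedge_{i<k}\mathcal{R}_{\Delta_i,\mathbb{H}_{n+1}}(C,\tilde{\mathbb{H}})$ holds. Since $C$ was arbitrary and $\mathcal{P}=2^{\mathbb{N}}$ contains every set, in particular every iterate $\tilde{\mathbb{H}}^{i}$, the stated $\Pi^1_2$ sentence is realized in $2^{\mathbb{N}}$. The main thing to verify carefully is that the "copies of $\Delta_i$" being colored are the same objects across the successive subcopies.
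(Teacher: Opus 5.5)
Your proposal is the paper's argument: the paper gets the corollary by applying Theorem~\ref{thm.FG} once for each of the $k(G,n)$ diaries, each time passing to a subcopy of the previous copy, and your heredity observation is what keeps the earlier diaries monochromatic along the way. The relativization issue you flag is glossed over in the paper, and your transport idea settles it, but along a diary-preserving \emph{embedding} of $\mathbb{D}_{n+1}$ into $\tilde{\mathbb{H}}^{i}$, which Theorem~\ref{thm.B7} supplies, rather than an isomorphism onto all of $\tilde{\mathbb{H}}^{i}$ (which need not exist); also, heredity alone is not a fallback for this step, since it does not put the new monochromatic copy inside $\tilde{\mathbb{H}}^{i}$.
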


%\pc{Can we let $\widehat{H}$ and $\mathbb{D}$ be the same structure? If so I want to rewrite this section a bit.} 
%{\color{blue}Technically a diary has a lot more structure than the graph it encodes.  But every diary has the property that the maximal nodes encode a graph.The terminal nodes of $\mathbb{D}$ form a copy of $\mathbb{H}$ that we can call $\widehat{\mathbb{H}}$.}
%By refining  a construction in Lemma 10.1 of \cite{DobrinenJML23} and adding ``controlled coding triples'' (Definition 2.2.2 in \cite{Balko7}), we obtain the following theorem.

 The following theorem is also proved in  the Appendix, Section~\ref{appendix}; it is (6) of Theorem \ref{thm.diary}.

\begin{theorem}\label{lem.D}
Fix $n\ge 2$. There is a computable diary $\mathbb{D}_{n+1}$ for $\mathbb{H}_{n+1}$ with the following property: For each finite $(n+1)$-clique free graph $G$, for each diary $\Delta$ for $G$, the set of tuples $\{m,j_0,j_1,\dots,j_{|G|-1}\}$ corresponding to copies of $\Delta$ in $\mathbb{D}_{n+1}$ is largely $2$-extendable.  In particular, for each diary $\Delta$ for a close copy of a graph $G$, the set of all tuples $\{n_0,j_0,j_1,\dots,j_{|G|-1}\}$ in $\mathbb{D}_{n+1}$ is largely $2$-extendable. 
\end{theorem}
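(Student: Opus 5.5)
The plan is to build $\mathbb{D}_{n+1}$ by the greedy algorithm of Subsection~\ref{greedy}, so that it copies the standard enumeration and carries the diary structure. Then we verify the three clauses of Definition~\ref{def.l2e} for the family $\mathcal{F}^\Delta$ of tuples $\{m,j_0,\dots,j_{|G|-1}\}$ coming from copies of a fixed diary $\Delta$.

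\emph{Construction.} Stage by stage, choose the node $v_{s+1}$ so that its edges to earlier nodes copy those of $\hat v_{s+1}$. We also require $v_{s+1}$ to lie in a fresh neighborhood set, or in a prescribed earlier neighborhood set, as dictated by the diary (controlled-coding-triple) bookkeeping of the Appendix's Definition~\ref{def.diary}. Since every choice is a search for a node satisfying finitely many edge and non-edge requirements, the extension property together with Lemma~\ref{build} and Lemma~\ref{extdown} ensures the search always succeeds. The construction is therefore computable, and every copy of $G$ inside it receives a unique diary, as in Theorem~\ref{dairies}.

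\emph{Verification.} We argue as in Theorem~\ref{le4} and Theorem~\ref{lpp} and conclude with Lemma~\ref{le1}. Clause (1) holds because $\Delta$ embeds above any threshold: realise $\Delta$ by the greedy algorithm using only nodes whose indices exceed $l$. For clause (2), use Theorem~\ref{local1}(3) to fix an $m$ whose neighborhood set meets $\mathbb{D}_{n+1}$ infinitely often. For each of infinitely many $j_0$ in that set with $v_{m,j_0}$ admissible as the first coordinate of $\Delta$, complete $\Delta$ upward. This gives $(m,j_0)\in\mathcal{F}^\Delta_2$ for infinitely many $j_0$.

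For clause (3), define $\tilde{\mathcal{F}}$ to consist of the tuples built by extending one node at a time. Given an initial segment $(m,j_0,\dots,j_{i-1})$ of a copy of $\Delta$ and a bound $l$, we need a copy with $j_i>l$. The next node must satisfy prescribed edges to earlier nodes and lie in a prescribed neighborhood, either a new large one or some $n_i\le m$. Lemma~\ref{extdown} (downward, with $v_m$ playing the role of $v_n$) and Lemma~\ref{build} (upward) supply infinitely many such nodes. In the close case, $m=n_0$ is the maximal neighborhood index, so all later nodes lie in neighborhoods of index $\le m$ with increasing $j_i$. This is exactly the downward-extension situation.

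\emph{Main obstacle.} The difficulty is that the one-point extension must respect the diary data, not only the graph type. The prescribed first instances where subsets of $G$ connect to $k$-cliques must remain correct for arbitrarily large $j_i$. My first attempt would be to bake this into the construction of $\mathbb{D}_{n+1}$. At every stage, for every finite partial diary configuration, reserve witnesses at cofinally many later stages; this resembles the way the Appendix's controlled coding triples make every node extendable within a prescribed neighborhood. With that in place, the large $1$-point extension property is a direct search, and Lemma~\ref{le1} finishes the proof.
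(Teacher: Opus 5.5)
Your proposal has a genuine gap at its core: the construction never produces a diary, and the extension tools you cite cannot see diary data.

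By Definition~\ref{def.diary}, the diary of $G$ is read off the meet-closure $A(G)$ in the coding tree $T$. That is, it depends on the full binary sequences $\hat{v}_j$, which record edges to \emph{all} earlier vertices of $\mathbb{H}_{n+1}$, not just to the vertices you have already selected. Whether every finite subgraph induces a diary depends on where the splitting nodes and age-changes of $T$ fall between your selected vertices. The requirements are: every pair splits, each level carries at most one event, every age-change is consecutive, and nodes are reduced at vertex levels. A greedy choice that only matches edges among selected vertices and prescribes the neighborhood set (the position of the first $1$) controls none of this. Two splits, or a split and an age-change, can land on the same level, and nothing forces consecutiveness or reduction.

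Lemmas~\ref{build} and~\ref{extdown} are one-point extension results for the isomorphism type of a finite graph only. So ``the search always succeeds'' is unjustified once diary constraints are imposed. Invoking Theorem~\ref{dairies} is circular, since that theorem is derived from the very construction you are supposed to supply. Reduced nodes (Definition~\ref{defn.ccn}) concern reductions at vertex levels, not extendability inside a prescribed neighborhood.

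The missing ingredient is the paper's computable subtree $\mathbb{A}\sse T$ that is essentially isomorphic to $T$ (Definition~\ref{defn.ai}). Between consecutive vertices $v_{i-1},v_i$ it inserts:
\begin{enumerate}
\item a splitting interval with one splitting node per level;
\item a consecutive age-change interval copying the age-changes of $T$ at the level of $\hat{v}_i$;
\item a reducing interval.
\end{enumerate}
Only then is $v_i$ placed, and $\mathbb{D}_{n+1}$ is the diary induced by $\mathbb{A}$.

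The verification has the same problem. Your ``main obstacle'' paragraph locates the difficulty correctly, but ``reserve witnesses at cofinally many later stages'' restates what must be proved rather than giving a mechanism. In the paper the mechanism is essential isomorphism with $T$: above every node of $\mathbb{A}$ there are infinitely many splitting nodes, consecutive age-changes and vertex nodes. The paper then proves large $2$-extendability for the finer tuples $L(G')$ of \emph{all} interesting levels. It extends one event at a time, by cases (a), (b), (c), extending the remaining nodes leftmost, and only then passes to the coarser tuples $\{m,j_0,\dots,j_{|G|-1}\}$. Adding a whole vertex at a time, as you do, means realizing all the splits, age-changes and reductions between consecutive vertices simultaneously, which graph-level extension lemmas cannot do.

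There are also two local errors:
\begin{enumerate}
\item In clause (1), choosing nodes with index above $l$ does not make the neighborhood index $m$ exceed $l$. That is what $\mathcal{F}_1$ being infinite requires.
\item In clause (3), Lemma~\ref{extdown} requires $H_0$ to avoid the first $n+1$ neighborhood sets, for an $n$ supplied by Theorem~\ref{local1}(4). In the close case $v_{m,j_0}$ lies in $N_m$ with $m$ fixed by the tuple, so the lemma does not apply with $v_m$ in the role of $v_n$.
\end{enumerate}
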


We point out that a subdairy of $\mathbb{D}_{n+1}$ determines an unique subcopy of $\mathbb{H}_{n+1}$.  We may equate the corresponding subdairy and subcopy of ${\mathbb{H}}_{n+1}$ since each subcopy of ${\mathbb{H}}_{n+1}$ has a unique computable expansion to a diary. The next theorem is a special case of Theorem 3.4.12 in \cite{Balko7}:

\begin{theorem}\label{thm.B7}
Any two diaries for 
 $\mathbb{H}_{n+1}$ are bi-embeddable as diaries.
\end{theorem}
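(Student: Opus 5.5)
We need to prove Theorem~\ref{thm.B7}: any two diaries for $\mathbb{H}_{n+1}$ are bi-embeddable as diaries. The plan is a back-and-forth style argument, except that we build only embeddings (forth steps), not isomorphisms. Given diaries $\mathbb{D}$ and $\mathbb{D}'$ for $\mathbb{H}_{n+1}$, it suffices by symmetry to build a single diary-embedding $\iota:\mathbb{D}\to\mathbb{D}'$. We build it node by node, following the enumeration of $\mathbb{D}$, exactly as in the greedy algorithm of Subsection~\ref{greedy}. At stage $s$ we will have a finite partial map $\iota_s$ from the first $s$ nodes of $\mathbb{D}$ into $\mathbb{D}'$ which preserves the full diary structure:
\begin{itemize}
\item the graph relation;
\item the order of indices;
\item the neighborhood-set labelling $v_{m,j}$;
\item the diary-specific data, namely first instances at which subsets of size at most $n$ acquire common neighbors in some $k$-clique, $1\le k<n$ (Definition~\ref{def.diary}).
\end{itemize}
The embedding is then $\iota=\bigcup_s\iota_s$.

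The key step is a one-point extension property for diaries. Let $\Delta$ be any finite subdiary of $\mathbb{D}'$ that is the image of a finite initial segment of $\mathbb{D}$, and let $\Delta^+$ be that initial segment extended by its next node. We need infinitely many, arbitrarily large, nodes of $\mathbb{D}'$ extending $\Delta$ to a copy of $\Delta^+$ as diaries. For the graph part this is Lemma~\ref{build} (upward) together with Lemma~\ref{extdown} (downward) and the extension property. For the diary part we use that $\mathbb{D}'$ is itself a diary for all of $\mathbb{H}_{n+1}$. By the defining clauses of Definition~\ref{def.diary}, every ``type'' that can legitimately occur next, meaning a new splitting, a new coding event for a clique, or a new maximal node, does occur above any given level. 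This is because $\mathbb{D}'$ must represent every finite $K_{n+1}$-free extension. So we pick the least such witness above all indices used so far. This preserves the index order and yields an embedding of diaries rather than merely of graphs.

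The main obstacle is this extension step for the diary data. Here one must check that the constraints recorded in $\Delta^+$ about which $k$-cliques have already been ``coded'' above the relevant level are satisfiable in $\mathbb{D}'$ at some level above the current image, and that the choice does not create a spurious earlier coding event. My first attempt is an induction on the level structure of Definition~\ref{def.diary}: handle splitting levels, coding levels, and maximal-node levels separately. In each case I would argue from the universality clause of $\mathbb{D}'$, namely that every consistent passing-number configuration appears cofinally, together with the fact that $K_{n+1}$-freeness of the target is guaranteed because $\Delta^+$ is a diary of a $K_{n+1}$-free graph. If this local argument becomes unwieldy, the fallback is to appeal to the abstract result, Theorem 3.4.12 of \cite{Balko7}, of which this is a special case, by verifying that Definition~\ref{def.diary} agrees with Definition 3.4.1 there. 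Once $\iota$ exists in both directions, bi-embeddability follows.
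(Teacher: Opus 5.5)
The paper gives no proof of this statement. It imports it as a special case of Theorem~3.4.12 of \cite{Balko7}, which is exactly your fallback. Your direct construction, however, has a genuine gap, at the very step you flag.

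\textbf{The justification for the extension step is insufficient.} You say that $\mathbb{D}'$ ``must represent every finite $K_{n+1}$-free extension.'' But the only hypothesis on a diary for $\mathbb{H}_{n+1}$ is that its vertex nodes form a copy of $\mathbb{H}_{n+1}$. That gives one-point extensions of \emph{graphs}, which is all Lemmas~\ref{build} and~\ref{extdown} provide, and those lemmas are phrased in terms of neighborhood indices of the standard enumeration, not diary levels. It says nothing about where splitting nodes and consecutive age-changes lie in the tree of $\mathbb{D}'$. Definition~\ref{def.diary} has no ``universality clause'' making every admissible passing-number configuration appear cofinally. Even cofinal occurrence somewhere in $\mathbb{D}'$ would not suffice: you need the event on a prescribed node of the current image, inside a prescribed window of levels, with no other event touching the tracked nodes in between.

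\textbf{The vertex-by-vertex, least-witness order makes this fail concretely.} Once $w_0,\dots,w_{s-1}$ are chosen, every interesting level of their induced diary in $\mathbb{D}'$ is fixed. The diary of $\{u_0,\dots,u_s\}$ may require the path of the new vertex to split off the path of some $u_i$, or to take part in an age-change with it, strictly between two of these old levels. So $w_s$ cannot simply be picked ``above all indices used so far.'' If the path of $w_i$ has no splitting node of $\mathbb{D}'$ in that window, no $w_s$ exists and the induction halts. Nothing in your construction prevents this; for example, the window may lie above the last splitting node on $w_i$'s path, where each node is extended by $w_i$ alone.

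\textbf{What a direct proof would need.} You would have to embed level by level, carrying the whole level set of $\mathbb{D}$, including nodes that only later lead to vertices. Images must be chosen with foresight, so that each image node keeps enough vertices of $\mathbb{D}'$ above it to realize:
\begin{itemize}
\item every later splitting,
\item every not-yet-occurred $\Con(m,p)$ event,
\item every vertex event with the prescribed passing numbers and reductions,
\end{itemize}
while inactive nodes are extended without creating events. Compare the case analysis in the proof of Theorem~\ref{thm.diary}(6). That analysis works only because $\mathbb{A}$ is essentially isomorphic to $T$. Proving the needed richness for an arbitrary diary coding $\mathbb{H}_{n+1}$ is the real content of the cited theorem, and your proposal does not supply it.

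\textbf{A smaller point.} Preserving the labels $v_{m,j}$ is not part of a diary embedding. What must be preserved are the tree and lexicographic orders, the order of interesting levels, and the recorded event data.
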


 Theorem \ref{thm.B7} implies that for each  copy  $\tilde{\mathbb{H}}$  of $\mathbb{H}_{n+1}$,
$\mathbb{D}_{n+1}$ embeds into $\tilde{\mathbb{H}}$.  Theorems~\ref{lem.D} and \ref{thm.B7}  imply the following:

\begin{corollary}\label{RPd}
    For each finite diary $\Delta$, $\tilde{\mathbb{H}}$ contains a set of tuples corresponding (as in Theorem~\ref{lem.D}) to copies of  $\Delta$ which is largely $2$-extendable.  
\end{corollary}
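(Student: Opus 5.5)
The plan is to transport the largely $2$-extendable family that lives inside $\mathbb{D}_{n+1}$ into $\tilde{\mathbb{H}}$ along a diary embedding. Fix a finite diary $\Delta$ for a graph $G$. By Theorem~\ref{lem.D}, the family $\mathcal{F}^{\mathbb{D}}$ of tuples $\{m,j_0,\dots,j_{|G|-1}\}$ attached to copies of $\Delta$ in $\mathbb{D}_{n+1}$ is largely $2$-extendable. By Theorem~\ref{thm.B7}, there is an embedding $\iota$ of $\mathbb{D}_{n+1}$ as a diary into the diary expansion of $\tilde{\mathbb{H}}$. We then define $\mathcal{F}^{\tilde{\mathbb{H}}}$ as the image of $\mathcal{F}^{\mathbb{D}}$ under the map that $\iota$ induces on indices, and check that this image is still largely $2$-extendable and consists of tuples of copies of $\Delta$ in $\tilde{\mathbb{H}}$.

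The key steps, in order, are as follows.

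First, I will check that $\iota$ induces a strictly increasing map $f$ on the indices that occur. A diary embedding preserves the tree/level structure, and in the diary picture the neighborhood index $m$ and the node indices $j_i$ are levels (or coding nodes). So $f$ is order preserving, and it sends the tuple of a copy of $\Delta$ to the tuple of its image, which is again a copy of $\Delta$. This uses that $\Delta$'s relevant data (the largest neighborhood index $m$ and the indices $j_i$) is part of the diary structure that embeddings respect. That is exactly why diaries, not bare graphs, are used here.

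Second, I will verify the three clauses of Definition~\ref{def.l2e} for $f[\mathcal{F}^{\mathbb{D}}]$.
\begin{enumerate}
\item Clause (1) holds because $f$ is injective: infinitely many first coordinates map to infinitely many.
\item Clause (2) holds because the witness $x_0$ with infinitely many partners $x_1$ maps to $f(x_0)$ with infinitely many partners $f(x_1)$.
\item For clause (3), given a tuple in the image and a bound $l_i$ in the image coordinates, pull back to a bound $l_i'$ in $\mathbb{D}$ large enough that every $x>l_i'$ has $f(x)>l_i$. This is possible because $f$ is strictly increasing, hence $f(x)\ge x$ on a domain where we can take $l_i'=l_i$. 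Then apply (\ref{eq.LE}) in $\mathcal{F}^{\mathbb{D}}$ and push the witness forward.
\end{enumerate}
So the alternating $\forall\exists$ pattern is preserved under monotone injective reindexing.

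The main obstacle is the first step: making precise that the embedding from Theorem~\ref{thm.B7} really is monotone on the specific indices $m, j_0,\dots$ and maps copies of $\Delta$ to copies of $\Delta$, rather than merely to copies of $G$. I would try first to read $m$ directly off the diary as the level of the splitting or neighborhood node, so that monotonicity is automatic from level preservation. Note that no computability is claimed in the corollary, so $\iota$ need not be effective, and nonuniformity is harmless.
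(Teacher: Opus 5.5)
Your proposal is correct and is the paper's argument: the paper obtains Corollary~\ref{RPd} directly from Theorem~\ref{lem.D} (a largely $2$-extendable family of tuples for copies of $\Delta$ in $\mathbb{D}_{n+1}$) together with Theorem~\ref{thm.B7} (so $\mathbb{D}_{n+1}$ embeds as a diary into $\tilde{\mathbb{H}}$), and your transport of that family along the strictly increasing index map induced by the embedding makes explicit the step the paper leaves unstated. One small cleanup: for the pullback bound in clause (3) you do not need $f(x)\ge x$, since injectivity of $f$ already gives that only finitely many $x$ satisfy $f(x)\le l_i$.
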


However the copy of $\mathbb{D}_{n+1}$ in   $\tilde{\mathbb{H}}$ and this largely $2$-extendable set are not necessarily computable from  $\tilde{\mathbb{H}}$.  %\footnote{In the future, we hope that Theorem \ref{thm.B7} will be proved by simpler means for special cases such as close graphs, similarly as we have  done for far anticliques in Theorem \ref{le4}.}  
%{\color{blue} (The copy of $\mathbb{D}$ inside $\tilde{\mathbb{H}}$ is computable from $\emptyset''$ relative to $\tilde{\mathbb{H}}$. But that does not help us here.)}
Hence, we will need another argument to show, in Theorem~\ref{thm.closele}, that there is family corresponding to close copies of $G$ which is both largely $2$-extendable and computable in $\tilde{\mathbb{H}}$.

% Let $G=\{g_0,\dots,g_{k-1}\}$ be a finite  $(n+1)$-clique free graph , where $k\ge 2$. Let $\tilde{\mathcal{F}}\sse[\mathbb{N}]^{k+1}$
% be the set of all tuples $\{n_0<j_0<j_1<\dots<j_{k-1}\}$ where $\{j_0 < j_1 <  \dots < j_{k-1}\}$
% is a close copy of $G$ in the original computable copy $\mathbb{H}$.  $\tilde{\mathcal{F}}$ is computable, since the computable copy $\mathbb{H}$ can tell exactly when a tuple $\{j_0<j_1<\dots<j_{k-1}\}$ is a close  copy of $G$.  Note that the tuple $\{n_0<j_0<j_1<\dots<j_{k-1}\}$ can be computably and uniformly found from any dairy of close copies of $G$. 
% If we are dealing with a dairy which not a dairy of a close copy of $G$ we will let find a tuple $\{n_0<j_0<j_1<\dots<j_{k-1}\}$ by finding $\{v_{j_0},\dots,v_{j_{|G|-1}}\}$ inside the dairy which is isomorphic to $G$. We will use these tuples to color these dairies. 

\begin{theorem}\label{thm.closele}
	For each copy  $\tilde{\mathbb{H}}$  of $\mathbb{H}_{n+1}$ and each finite $(n+1)$-clique free graph $G$ with at least two vertices, there is  a family  $\mathcal{F}$ of tuples corresponding (as above) to close copies of $G$ which is  largely $2$-extendable  and computable from $\tilde{\mathbb{H}}$.
\end{theorem}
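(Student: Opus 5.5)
We follow the template of the proofs of Theorems~\ref{le4} and \ref{lpp}. Fix an enumeration $g_0,\dots,g_{k-1}$ of $G$, $k=|G|\ge 2$. The plan is to let $g_0$ play the role of $v_{n_0,j_0}$ and to attach the remaining vertices one at a time, each with a larger index $j_i$ but in a neighborhood set $N_{n_i}$ with $n_i\le n_0$, exactly as $v_{n_1,j_1}$ was attached in the close pair case. First apply Theorem~\ref{local1}~(4) with $l_0=0$ to obtain a node $v_n\in\tilde{\mathbb{H}}$ such that infinitely many nodes of $\tilde{\mathbb{H}}$ connected to $v_n$ lie in neighborhood sets of index $\le n$. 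Fixing $n$ is finite information, so everything below stays computable in $\tilde{\mathbb{H}}$.

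Define $\mathcal{F}$ to be the set of tuples $(n_0,j_0,j_1,\dots,j_{k-1})$ such that:
\begin{itemize}
\item $\{v_{n_0,j_0},v_{n_1,j_1},\dots,v_{n_{k-1},j_{k-1}}\}\subseteq\tilde{\mathbb{H}}$ and $v_{n_i,j_i}\mapsto g_i$ is an isomorphism;
\item $n_0>n$ is larger than the indices of the first $n+1$ neighborhood sets, so that Lemma~\ref{extdown} applies to $\{v_{n_0,j_0}\}$;
\item for $i\ge 1$, $v_{n_i,j_i}$ is connected to $v_n$, and $j_0<j_1<\dots<j_{k-1}$.
\end{itemize}
Since $v_{n_i,j_i}$ is connected to $v_n$, we get $n_i\le n<n_0$, so every such copy is close. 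Membership is decidable from $\tilde{\mathbb{H}}$ and $n$ by inspecting indices and edges.

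Verifying large $2$-extendability goes by Lemma~\ref{le1} with $\tilde{\mathcal{F}}=\mathcal{F}$. Conditions (1) and (2) follow from Theorem~\ref{local1}~(2) and (3): there are infinitely many admissible $n_0$, and some admissible $n_0$ has infinitely many $j_0$ with $v_{n_0,j_0}\in\tilde{\mathbb{H}}$. The remaining point is that every prefix extends. Given a partial copy $H_0=\{v_{n_0,j_0},\dots,v_{n_{i-1},j_{i-1}}\}$ realizing $g_0,\dots,g_{i-1}$ and a bound $l$, apply Lemma~\ref{extdown} to $H_0$ with $H_1$ the one point extension realizing $g_i$. This yields infinitely many nodes $v_{n_i,j_i}$ connected to $v_n$ with $n_i\le n$, so one of them has $j_i>l$. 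Thus each tuple in $\mathcal{F}_{i+1}$ with $i\ge 1$ has large $1$-point extensions. This also shows that every $(n_0,j_0)$ with $n_0$ admissible lies in $\mathcal{F}_2$ (taking $i=1$), which gives condition (2).

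The main obstacle is the hypothesis of Lemma~\ref{extdown}, which requires none of $H_0$'s nodes to lie in the first $n+1$ neighborhood sets. The added nodes $v_{n_i,j_i}$ with $n_i\le n$ do lie in low neighborhood sets, so after the first step the lemma does not literally apply. I would first try to reprove the lemma in the needed form: the nodes of $\tilde{\mathbb{H}}$ connected to $v_n$ and outside the low neighborhood sets' influence form, after a neighborhood set coloring and indivisibility as in Theorem~\ref{local1}, a copy of $\mathbb{H}_n$-like extension structure inside $\tilde{\mathbb{H}}$. The extension property relative to that copy should then supply the one point extensions with $j_i$ arbitrarily large. The $K_{n+1}$-freeness of the required extension must also be checked: the new node is forced adjacent to $v_n$, so the nodes of $H_0$ it must join, together with $v_n$, must not form a $K_n$.

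If this fails, the fallback is to choose $v_n$ from Theorem~\ref{local1}~(4) applied to a copy $\tilde{\mathbb{H}}'$ of $\mathbb{H}_{n+1}$ inside $\tilde{\mathbb{H}}$ in which $v_n$ is disconnected from everything relevant. Only a finite amount of such choice data enters, so computability from $\tilde{\mathbb{H}}$ is preserved.
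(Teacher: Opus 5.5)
There is a genuine gap, and it is exactly the point you leave as something to ``check'': the $K_{n+1}$-freeness condition fails. In your family every vertex $v_{n_i,j_i}$ with $i\ge 1$ is adjacent to the single anchor $v_n$. So $v_{n_1,j_1},\dots,v_{n_{k-1},j_{k-1}}$ all lie among the neighbors of one vertex of a $K_{n+1}$-free graph, and the set of neighbors of a vertex in such a graph is $K_n$-free. Hence your $\mathcal{F}$ is empty unless $G-g_0$ is $K_n$-free.

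For $n=2$ this is already fatal. The neighbors of $v_n$ form an independent set, so whenever $g_i$ must be joined to some $g_r$ with $1\le r<i$, the required one-point extension would form a triangle with $v_n$. For $G$ the path on four vertices, or two disjoint edges, no enumeration avoids this. Two disjoint copies of $K_n$ do the same for general $n$. So the claimed large $1$-point extensions fail, and with them condition (2).

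The hypothesis of Lemma~\ref{extdown} is not the real obstacle. Since $v_n\in\tilde{\mathbb{H}}$, the extension property of $\tilde{\mathbb{H}}$ applied to $D\cup\{v_n\}$ would already give the extensions whenever $D\cup\{v_n\}$ is $K_n$-free. Neither of your remedies helps: reproving that lemma, or re-choosing $v_n$ in a subcopy, still forces all of $g_1,\dots,g_{k-1}$ among the neighbors of one vertex. Your construction is fine for $|G|=2$, which is the close-pair argument of Theorem~\ref{lpp}.

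The missing idea is a separate anchor for each added vertex. The paper fixes, as finite information, an anticlique $\{v_{\tilde n_i,x_i}: i\le k-2\}$ in $\tilde{\mathbb{H}}$ (a far anticlique from Theorem~\ref{le4}). It restricts to $n_0>x_{k-2}$, so that $v_{j_0}$ is adjacent to no anchor. It then requires the $r$-th added vertex to be adjacent to exactly one anchor, a different one for each $r$.

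With this arrangement, the anchor used at step $r$ is adjacent to none of the vertices already placed. So the set the new vertex must join is $D$ together with a point isolated from $D$, which is $K_n$-free because $G$ is $K_{n+1}$-free. The plain extension property in $\tilde{\mathbb{H}}$ then gives infinitely many such vertices above any bound. Adjacency to an anchor of index below $n_0$ puts the new vertex in a neighborhood set of index below $n_0$, which is what makes the copy close.

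Your underlying idea, bounding the neighborhood index by adjacency to a low-index vertex, is the right one. It just has to be spread over $k-1$ mutually non-adjacent anchors, which also makes Theorem~\ref{local1}~(4) and Lemma~\ref{extdown} unnecessary.
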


\begin{proof}

We will inductively build $\mathcal{F}$.  We will start with $\mathcal{F}_2$ and then, given $\mathcal{F}_{l}$, construct $\mathcal{F}_{l+1}$. Each $\mathcal{F}_l$ will be largely $2$-extendable, computable from $\tilde{\mathbb{H}}$, and the tuples in $\mathcal{F}_l$ will be the indexes for a close copy of the graph, $\{ g_0, g_1, \ldots , g_{l-2}\}$. We will let $\mathcal{F}=\mathcal{F}_{k+1}$. 

By Theorem \ref{le4}, there are far anticliques of all sizes in $\tilde{\mathbb{H}}$.
Fix a far anticlique of size $k-1$, $\{ v_{\tilde{n}_i,x_i}  :  0\leq i \leq k-2\}$, in $\tilde{\mathbb{H}}$.

Let $\mathcal{F}_2$ be the set of indices of nodes $v_{n_0,j_0}$ in  $\tilde{\mathbb{H}}$ where $n_0 > x_{k-2}$. 
By Theorem \ref{local1} (1) and (3), $\mathcal{F}_2$ is largely $2$-extendable.  Given $x_{k-1}$ and $\tilde{\mathbb{H}}$, $\mathcal{F}_2$ is computable. Since the pairs in  $\mathcal{F}_2$ correspond to nodes, they are automatically close. Hence the inductive hypotheses are met.

We will assume $\mathcal{F}_{l}$ is given meeting the inductive hypotheses, and, additionally, if $(n_0,j_0,\ldots ,j_r,\dots, j_{l-2})\in{\mathcal{F}_l}$ then, for all $r$, if $1\leq r < l-2$, then $v_r$ is connected to exactly one node in the given anticlique, $v_{\tilde{n}_{k-1-r},x_{k-1-r}}$.  Let $(n_0,j_0,\dots, j_{l-2})\in{\mathcal{F}_l}$.  Then, for each $l_1 > j_{l-2}$, by the Extension Property, there are infinitely many $j_{l-1} > l_1$ in $\tilde{\mathbb{H}}$ such that $\{ v_{j_0}, v_{j_1}, \ldots v_{j_{l-2}}, v_{j_{l-1}}\}$ is order isomorphic to $\{ g_0, g_1, \ldots, g_{l-1}\}$ and, among the given anticlique, $v_{j_{l-1}}$ is only connected to $v_{\tilde{n}_{k-l},x_{k-l}}$. For such a tuple $n_0 > n_{j_{l-1}}$ and $j_{l-2} <j_{l-1}$.  Hence the tuple $\{ v_{j_0}, v_{j_1}, \ldots, v_{j_{l-2}}, v_{j_{l-1}}\}$ is a close copy of $\{ g_0, g_1, \ldots, g_{l-1}\}$. Let $\mathcal{F}_{l+1}$ be the set of all such tuples for all $l_1$.  $\mathcal{F}_{l+1}$ is computable given $\mathcal{F}_{l}$ and, by Lemma~\ref{le1}, $\mathcal{F}_{l+1}$ is largely $2$-extendable.  

So $\mathcal{F}$ is the set of tuples $(n_0,j_0,\ldots ,j_r,\dots, j_{k-2})$ where $n_0 > x_{k-2}$, the graph $\{ v_{j_0}, v_{j_1}, \ldots v_{j_{k-1}}\}$ is a close  copy of $\{ g_0, g_1, \ldots, g_{k-1}\}$, and if $1\leq r < l-2$, then $v_r$ is connected to exactly one node in the given anticlique,  namely $v_{\tilde{n}_{k-1-r},x_{k-1-r}}$. 
\end{proof}

\begin{theorem}\label{thm.onecolornearG}
    Let $\Delta_i$, for $i<k=k(G,n)$, be all the diaries for $G$. Then the $\Pi^1_2$ sentence  ``$\forall C \exists \tilde{\mathbb{H}} \bigwedge_{i<k} \mathcal{R}_{\Delta_i,\mathbb{H}_{n+1}}(C,\tilde{\mathbb{H}})$" codes $\emptyset^{|G|-1}$. 
\end{theorem}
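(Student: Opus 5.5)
By relativization it suffices to handle $X=\emptyset$: we exhibit a computable coloring $C$ of copies of the diaries of $G$ such that any $\tilde{\mathbb{H}}$ with $\bigwedge_{i<k}\mathcal{R}_{\Delta_i,\mathbb{H}_{n+1}}(C,\tilde{\mathbb{H}})$ computes $\emptyset^{(|G|-1)}$. The coloring is the Jockusch coloring $J_{|G|+1}$ of Definition~\ref{JC}, applied to the $(|G|+1)$-tuple $\{m,j_0,\dots,j_{|G|-1}\}$ attached to the copy of $G$ sitting as the maximal nodes of the diary. Here $m$ is the largest neighborhood index and $j_0<\dots<j_{|G|-1}$ are the node indices. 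By Theorem~\ref{dairies} the diary of a copy of $G$ in $\widehat{\mathbb{H}}_{n+1}$ is computable. Hence we may equally view $C$ as the coloring of copies of $G$ by the pair $(i,J_{|G|+1}(m,j_0,\dots))$, where $i$ is the index of its diary. Either way $C$ is computable.

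Now fix $\tilde{\mathbb{H}}$ in which each $\Delta_i$ is monochromatic. Consider the diaries corresponding to close copies of $G$; by Theorem~\ref{dairies} these are exactly the diaries whose copies are close. For close copies the attached tuple is $\{n_0,j_0,\dots,j_{|G|-1}\}$.

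Apply Theorem~\ref{thm.closele} to get a family $\mathcal{F}$ of such tuples for close copies of $G$ in $\tilde{\mathbb{H}}$ which is largely $2$-extendable and computable from $\tilde{\mathbb{H}}$; this uses $|G|\ge 2$. The crux is that $\mathcal{F}$ is monochromatic with respect to $J_{|G|+1}$. A priori the close copies of $G$ in $\tilde{\mathbb{H}}$ may fall into several different (close) diaries, each with its own $J$-color, so monochromaticity of each $\Delta_i$ alone does not immediately make $\mathcal{F}$ monochromatic. This is the main obstacle.

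I would try to handle it in one of two ways.
\begin{enumerate}
\item Refine the construction in Theorem~\ref{thm.closele} so that all tuples lie in a single diary. The extra anticlique-connection conditions in that construction look designed to pin down the diary. Any diary the family hits must then persist, via the inductive extension steps, and so the tuples would be determined by one $\Delta_i$.
\item Decompose $\mathcal{F}$ into its finitely many diary-classes $\mathcal{F}^{(i)}$, each monochromatic in $J_{|G|+1}$. Then argue that if some class has a stub at level $2$, it can be discarded, and at least one class remains largely $2$-extendable. The difficulty is that this selection is nonuniform, but that is harmless for coding, since finite information can be hard-wired into the reduction.
\end{enumerate}
I expect option (2) to work. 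By Lemma~\ref{le1} and a pigeonhole argument at each $\forall l\,\exists x$ level, some subfamily $\mathcal{F}^{(i)}$ inherits the largely $2$-extendable property, because a finite union of families each failing the $\forall\exists$ extension condition at a given tuple fails it for the union. Alternatively, run Theorem~\ref{JCLE}(1) directly: its proof only needs the largely $2$-extendable structure together with the fact that all tuples realized share a color. I would check that the argument in (1) and (2) of Theorem~\ref{JCLE} goes through when the color is merely constant on each diary class of $\mathcal{F}$, by following a single extension path that stays inside one class.

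Once a monochromatic, $\tilde{\mathbb{H}}$-computable, largely $2$-extendable family of $(|G|+1)$-tuples is in hand, Theorem~\ref{JCLE}(3) with $n=|G|+1$ shows that the family, and hence $\tilde{\mathbb{H}}$, computes $\emptyset^{(|G|+1-2)}=\emptyset^{(|G|-1)}$. Relativizing to an arbitrary $X$ (using $J^X_{|G|+1}$, computable in $X$) gives the definition of coding $\emptyset^{(|G|-1)}$.
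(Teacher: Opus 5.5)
\textbf{There is a genuine gap, at exactly the point you flag as the main obstacle, and neither of your fixes closes it.}

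Option (1) is precisely what the paper leaves open. Restricting the $\tilde{\mathbb{H}}$-computable family of Theorem~\ref{thm.closele} to a single diary is not known to be possible.

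Option (2) does not work as stated. Your pigeonhole over nested ``infinitely many'' quantifiers is correct for one fixed pair $(x_0,x_1)$. But it only gives, pair by pair, some diary class inside which that pair extends, and the class depends on the pair. Discarding every class that contains a stub may discard all of them, so you would have to delete stub pairs instead. That pruning is an arithmetic condition, not finite information. The pruned family is therefore not computable from $\tilde{\mathbb{H}}$, and Theorem~\ref{JCLE}(3) cannot be run on it.

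Running Theorem~\ref{JCLE}(3) on the unpruned class, or on all of $\mathcal{F}$, also fails. Theorem~\ref{JCLE}(2) can fail at a pair whose large extensions leave the class. Those extensions land in some other diary class $c$, and you get a contradiction only if you already know that class $c$ has color $1$. Your ``single extension path'' alternative hits the same wall: nothing about $\mathcal{F}$ alone tells you that the class the path lands in has color $1$.

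\textbf{The missing idea is Corollary~\ref{RPd}.} It follows from Theorem~\ref{lem.D} and bi-embeddability (Theorem~\ref{thm.B7}). For \emph{every} diary $\Delta$, $\tilde{\mathbb{H}}$ contains a largely $2$-extendable family of tuples of copies of $\Delta$, though not necessarily one computable from $\tilde{\mathbb{H}}$.

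Each diary class is monochromatic in $\tilde{\mathbb{H}}$, and Theorem~\ref{JCLE}(1) needs no computability. So every close diary class must have $J_{|G|+1}$-color $1$. Hence every close copy of $G$ in $\tilde{\mathbb{H}}$ has color $1$. The family from Theorem~\ref{thm.closele} is then automatically monochromatic, however its tuples are distributed among diaries, and Theorem~\ref{JCLE}(3) finishes the proof.

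The point is to separate the two requirements. Monochromaticity comes non-effectively from Corollary~\ref{RPd}, and computability comes from Theorem~\ref{thm.closele}. No single family has to provide both.
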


% \begin{theorem}
% For each $n\ge 2$, for each  finite  $(n+1)$-clique free graph $G$, ($|G| > 1$), 
% there is a computable $2$-coloring on the set of all close copies of $G$ in $\mathbb{H}_{n+1}$ such that there is a $\tilde{\mathbb{H}}$ where all close copies of $G$ have the same color and moreover any such $\tilde{\mathbb{H}}$ computes $\emptyset^{|G|-1}$.  
% \end{theorem}

\begin{proof}
Color all diaries of $G$ with the Jockusch coloring $J_{|G|+1}$. So there is a copy $\tilde{\mathbb{H}}$ in which each diary of $G$ has one color.  By Corollary~\ref{RPd}, for each diary $\Delta$ for $G$ there is a largely $2$-extendable subset encoding copies of $\Delta$. By Theorem~\ref{JCLE} (1)  for each diary of a close copy of $G$, the  copies of that diary in $\tilde{\mathbb{H}}$  have $J_{|G|+1}$-color $1$.  This implies in $\tilde{\mathbb{H}}$ all close copies of $G$ have color 1 w.r.t.\ to  $J_{|G|+1}$. Use Theorem~\ref{thm.closele} to get a family $\mathcal{F}$ of  close copies of $G$ in $\tilde{\mathbb{H}}$ that is  largely $2$-extendable  and computable from $\tilde{\mathbb{H}}$.  This set is monochromic w.r.t.\  $J_{|G|+1}$.  Now apply Theorem~\ref{JCLE} (3) for our conclusion. 
\end{proof}

\begin{lemma}
    Let $\Delta_i$, for $i<k=k(G,n)$, be all the diaries for $G$.  The sentence 
    ``$\forall C \exists \tilde{\mathbb{H}} \bigwedge_{i<k} \mathcal{R}_{\Delta_i,\mathbb{H}_{n+1}}(C,\tilde{\mathbb{H}})$" is strongly Weihrauch reducible to ``$G$ has finite big Ramsey degree in $\mathbb{H}_{n+1}$''. 
\end{lemma}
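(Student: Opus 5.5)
We need functionals $\Phi$ and $\Psi$ as in Definition~\ref{equiv}. The idea is the one sketched in the introduction: encode in the color of a copy of $G$ both its diary and its original color. Then the big Ramsey degree bound $k=k(G,n)$, together with the exactness property that every diary persists in every subcopy, forces each diary to be monochromatic.

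\textbf{The map $\Phi$.} Let $C$ be a finite coloring of copies of the diaries $\Delta_i$, $i<k$, in $\widehat{\mathbb{H}}_{n+1}$, which we equate with $\mathbb{H}_{n+1}$. Each copy of $G$ in $\widehat{\mathbb{H}}_{n+1}$ corresponds to exactly one diary, and by Theorem~\ref{dairies} the functional $D(G,\cdot)$ computes that diary together with its copy of $\Delta_i$. Define
\[
\Phi(C)(G') = (i, C(\Delta')),
\]
where $\Delta'$ is the copy of $\Delta_i = D(G',\widehat{\mathbb{H}}_{n+1})$ extending $G'$. This is a finite coloring of copies of $G$. It is total and computable in $C$, because $\widehat{\mathbb{H}}_{n+1}$ is computable.

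\textbf{The map $\Psi$.} Let $\tilde{\mathbb{H}}$ be any solution of ``$G$ has finite big Ramsey degree $k$'' for $\Phi(C)$, so at most $k$ colors occur on copies of $G$ in $\tilde{\mathbb{H}}$. Take $\Psi$ to be the identity, possibly composed with the computable passage from $\tilde{\mathbb{H}}$ to its unique computable diary expansion, so that ``copies of $\Delta_i$ in $\tilde{\mathbb{H}}$'' is meaningful. We must show that every $\Delta_i$ is monochromatic under $C$ within $\tilde{\mathbb{H}}$.

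\textbf{Key step.} Every diary $\Delta_i$ has a copy inside $\tilde{\mathbb{H}}$. This follows from Corollary~\ref{RPd}, or directly from Theorem~\ref{thm.B7}, since $\mathbb{D}_{n+1}$ embeds into $\tilde{\mathbb{H}}$ as a diary. Hence for each of the $k$ values of $i$, some color of the form $(i,\cdot)$ appears in $\tilde{\mathbb{H}}$. These are $k$ pairwise distinct colors. Since at most $k$ colors appear, for each $i$ exactly one color $(i,c_i)$ appears. So all copies of $\Delta_i$ in $\tilde{\mathbb{H}}$ receive the same $C$-color $c_i$, and $\mathcal{R}_{\Delta_i,\mathbb{H}_{n+1}}(C,\tilde{\mathbb{H}})$ holds for every $i$.

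\textbf{Main obstacle.} The delicate point is checking that the diary of a copy of $G$ computed inside $\tilde{\mathbb{H}}$ agrees with the one computed in $\widehat{\mathbb{H}}_{n+1}$. This is needed so that $\Phi$'s label $i$ is still correct on copies lying in the subcopy. I would get this from the statement in Theorem~\ref{dairies} that each copy of $G$ in any subcopy $\tilde{\mathbb{H}}$ corresponds to exactly one diary, and from the remark that subdiaries of $\mathbb{D}_{n+1}$ are equated with subcopies.

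I would also confirm that the persistence of every diary in $\tilde{\mathbb{H}}$ is provable, or at least true, in $\mathcal{P}$. It is arithmetic in $\tilde{\mathbb{H}}$ and, by Theorem~\ref{thm.B7}, holds outright, which suffices for realizing the implication in $\mathcal{P}$.
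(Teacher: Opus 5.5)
Your proposal is correct and follows essentially the same argument as the paper. Both color each copy of $G$ by the pair $(i, C(\Delta'))$ given by its unique diary (Theorem~\ref{dairies}), take $\Psi$ to be the identity, and combine the persistence of every diary in $\tilde{\mathbb{H}}$ (Corollary~\ref{RPd}) with the bound of $k$ colors to force exactly one $C$-color per diary; your extra check that the diary label computed in $\widehat{\mathbb{H}}_{n+1}$ is still the right one on copies lying in the subcopy is a point the paper passes over silently, resting on the same theorem you cite.
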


\begin{proof}
    Assume we are given a finite coloring $C$ of all the copies of these diaries in $\widehat{\mathbb{H}}_{n+1}$. By Theorem~\ref{dairies}, each copy of $G$ computably extends to a unique $\Delta_i$. Color that copy of $G$ by the pair $(i,{C}(\Delta_i))$.  Use the sentence ``$G$ has finite big Ramsey degree in $\mathbb{H}_{n+1}$'' to get a $\tilde{\mathbb{H}}$ where are at most $k$ colors are used; one for each diary.  
    Each diary must appear in $\tilde{\mathbb{H}}$, see Corollary~\ref{RPd}.  So each $i<k$ appears in the first component of our coloring.  Hence, for all $i<k$, the second component must be constant on all copies of the diary $\Delta_i$.  Note for two different diaries the value of the second component need not be the same.  
\end{proof}

So, by Corollary~\ref{RPd}, $\mathcal{P}=2^{\mathbb{N}}$ realizes ``$G$ has finite big Ramsey degree $k(G,n)$ in $\mathbb{H}_{n+1}$''.  Our ultimate result now follows from Definition~\ref{equiv} and Theorem~\ref{thm.onecolornearG}. 

\begin{theorem}
    ``$G$ has finite big Ramsey degree $k(G,n)$ in $\mathbb{H}_{n+1}$'' codes $\emptyset^{|G|-1}$ and implies $ACA_0$. 
\end{theorem}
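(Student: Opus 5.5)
We combine the strong Weihrauch reduction in the preceding lemma with the coding result Theorem~\ref{thm.onecolornearG}. By the remark after Definition~\ref{equiv}, if $\varphi_0$ is strongly Weihrauch reducible to $\varphi_1$ and $\varphi_0$ codes $\emptyset^{(k)}$, then so does $\varphi_1$. Take $\varphi_0$ to be ``$\forall C \exists \tilde{\mathbb{H}} \bigwedge_{i<k} \mathcal{R}_{\Delta_i,\mathbb{H}_{n+1}}(C,\tilde{\mathbb{H}})$'' and $\varphi_1$ to be ``$G$ has finite big Ramsey degree $k(G,n)$ in $\mathbb{H}_{n+1}$''. Then the lemma and Theorem~\ref{thm.onecolornearG} yield the coding of $\emptyset^{(|G|-1)}$ at once.

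Concretely, we relativize to an arbitrary oracle $X$. Let $C_0 = J^X_{|G|+1}$ be the $X$-computable coloring of diaries used in the proof of Theorem~\ref{thm.onecolornearG}. Let $\Phi(C_0)$ be the coloring of copies of $G$ in $\widehat{\mathbb{H}}_{n+1}$ by $(i, C_0(\Delta_i))$, where $\Delta_i = D(G,\cdot)$ is the unique diary from Theorem~\ref{dairies}. This coloring is computable in $X$.

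Now let $\tilde{\mathbb{H}}$ be any copy in which at most $k(G,n)$ colors occur on copies of $G$. By Corollary~\ref{RPd} every diary occurs in $\tilde{\mathbb{H}}$, so each first coordinate $i<k$ is realized. Hence the second coordinate is constant on copies of each $\Delta_i$, so $\tilde{\mathbb{H}}$ itself (with $\Psi$ the identity) witnesses $\bigwedge_i \mathcal{R}_{\Delta_i}(C_0,\tilde{\mathbb{H}})$. The proof of Theorem~\ref{thm.onecolornearG} then applies. By Theorem~\ref{JCLE}(1), the close diaries are colored $1$. Theorem~\ref{thm.closele} gives an $\tilde{\mathbb{H}}$-computable largely $2$-extendable family of close copies, and this family is monochromatic. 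By Theorem~\ref{JCLE}(3), $\tilde{\mathbb{H}}$ computes $X^{(|G|+1-2)} = X^{(|G|-1)}$.

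For $ACA_0$, assume $|G|\ge 2$, so $|G|-1\ge 1$. Then every $\mathcal{P}$ (closed under $\le_T$) realizing the sentence contains $X'$ for each $X\in\mathcal{P}$, since $C_0\le_T X$ lies in $\mathcal{P}$. So $\mathcal{P}$ is closed under jump, and this is the standard argument that the sentence implies $ACA_0$ over $RCA_0$. The point needing care is that the sentence is actually realized, i.e.\ true in $2^{\mathbb{N}}$; this follows from Corollary~\ref{RPd} together with the corollary to Theorem~\ref{thm.FG}, as the paper notes. The other delicate point is checking that the totality conditions of Definition~\ref{equiv} hold for $\Phi$ and $\Psi$. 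Both are uniform, since $D(G,\tilde{\mathbb{H}})$ is computable, so I expect no real obstacle beyond bookkeeping the relativization.
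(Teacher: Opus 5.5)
Your proposal is correct and follows the paper's route. The paper likewise combines the preceding lemma (strong Weihrauch reduction of the diary sentence to the big Ramsey degree sentence, via the coloring $(i, C(\Delta_i))$ with the identity as the backward map) with Theorem~\ref{thm.onecolornearG}, and uses Corollary~\ref{RPd} to see that the sentence is realized in $2^{\mathbb{N}}$; your explicit unpacking through Theorems~\ref{JCLE} and~\ref{thm.closele} is just that composition written out. One small correction: the hypothesis $|G|\ge 2$ is needed already for the coding part (for $J^X_{|G|+1}$ and Theorem~\ref{thm.closele}), not only for deriving $ACA_0$.
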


But we have a little more to say about this situation: 

\begin{lemma}
    Let $\Delta_i$, for $i<k=k(G,n)$, be all the diaries for $G$.  The sentence ``$G$ has finite big Ramsey degree $k(G,n)$ in $\mathbb{H}_{n+1}$''  is strongly Weihrauch reducible to the sentence ``$\forall C \exists \tilde{\mathbb{H}} \bigwedge_{i<k} \mathcal{R}_{\Delta_i,\mathbb{H}_{n+1}}(C,\tilde{\mathbb{H}})$".
\end{lemma}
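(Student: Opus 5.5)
We need two reductions $\Phi$ and $\Psi$. Start with a finite coloring $C_0$ of the copies of $G$ in $\widehat{\mathbb{H}}_{n+1}$, which we identify with $\mathbb{H}_{n+1}$ as before. We want to turn it into a coloring of the copies of the diaries $\Delta_i$. Theorem~\ref{dairies} says each copy of $G$ in $\widehat{\mathbb{H}}_{n+1}$ corresponds to exactly one diary, and that this diary is computed by $D(G,\widehat{\mathbb{H}}_{n+1})$. Conversely, a copy of a diary $\Delta_i$ determines its copy of $G$ computably: it is the set of maximal nodes of $\Delta_i$. So we define $\Phi(C_0)$ on a copy $\delta$ of $\Delta_i$ by $\Phi(C_0)(\delta)=C_0(g_\delta)$, where $g_\delta$ is the copy of $G$ formed by the maximal nodes of $\delta$. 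This is a finite coloring with the same colors as $C_0$. It is computable from $C_0$ uniformly, so it is total for every oracle.

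Next, suppose $\tilde{\mathbb{H}}$ satisfies $\bigwedge_{i<k}\mathcal{R}_{\Delta_i,\mathbb{H}_{n+1}}(\Phi(C_0),\tilde{\mathbb{H}})$. Then for each $i<k$ all copies of $\Delta_i$ inside $\tilde{\mathbb{H}}$ get one color $c_i$. We take $\Psi$ to be the identity, $\Psi(\tilde{\mathbb{H}})=\tilde{\mathbb{H}}$, which is trivially total. To verify the conclusion, take any copy $g$ of $G$ in $\tilde{\mathbb{H}}$. Since $\tilde{\mathbb{H}}\subseteq\widehat{\mathbb{H}}_{n+1}$, Theorem~\ref{dairies} gives a unique $i$ and a copy $\delta$ of $\Delta_i$ with maximal nodes $g$. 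Hence $C_0(g)=\Phi(C_0)(\delta)=c_i$. So on the copies of $G$ in $\tilde{\mathbb{H}}$, $C_0$ takes values only in $\{c_i : i<k\}$, which is at most $k(G,n)$ colors. This is exactly $\mathcal{R}_0(C_0,\tilde{\mathbb{H}})$ for the sentence ``$G$ has finite big Ramsey degree $k(G,n)$''.

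The main obstacle is checking that the diary copy $\delta$ attached to $g$ actually lies inside $\tilde{\mathbb{H}}$, so that the monochromaticity of $\Delta_i$ in $\tilde{\mathbb{H}}$ applies to it. The extra (non-maximal) nodes of $\delta$ could a priori sit outside $\tilde{\mathbb{H}}$. The remedy is the remark after Theorem~\ref{lem.D}: each subcopy of $\mathbb{H}_{n+1}$ has a unique computable expansion to a diary, and diaries are computed relative to the subcopy, as in $D(G,\tilde{\mathbb{H}})$. So I would read ``copies of $\Delta_i$ in $\tilde{\mathbb{H}}$'' as copies in the induced subdiary, and use Theorem~\ref{dairies} with $\tilde{\mathbb{H}}$ as the oracle. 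The step to justify is that this relative diary of $g$ is a copy of $\Delta_i$ in $\tilde{\mathbb{H}}$, and that its maximal nodes are still $g$. Once that is in place, both reductions are computable and the verification above is immediate.
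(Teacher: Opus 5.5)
Your proposal is correct and follows the paper's proof: you color each copy of $\Delta_i$ by the $C_0$-color of its underlying copy of $G$, keep the resulting $\tilde{\mathbb{H}}$ unchanged, and use the uniqueness of the diary from Theorem~\ref{dairies} to conclude that at most $k(G,n)$ colors occur. The obstacle you flag is settled in the paper by convention rather than by a further argument. A copy of $\Delta_i$ is simply a copy of $G$ together with the diary it induces (Definition~\ref{def.diary}, Theorem~\ref{thm.diary}(4)), and that diary's terminal nodes are exactly the vertices of $G$, so every copy of $G$ in $\tilde{\mathbb{H}}$ with diary $\Delta_i$ automatically counts as a copy of $\Delta_i$ in $\tilde{\mathbb{H}}$.
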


\begin{proof}
    A coloring of copies of $G$ clearly induces a coloring of all diaries of $G$.  This is well-defined by Theorem~\ref{dairies}.  Use the sentence ``$\forall C \exists \tilde{\mathbb{H}} \bigwedge_{i<k} \mathcal{R}_{\Delta_i,\mathbb{H}_{n+1}}(C,\tilde{\mathbb{H}})$" to get a copy of $\tilde{\mathbb{H}}$  in which each diary for a copy of $G$ has one color.  In $\tilde{\mathbb{H}}$, again by Theorem~\ref{dairies}, each copy of $G$ gets one color.  At most $k$ colors are used in $\tilde{\mathbb{H}}$.  
\end{proof}

\begin{definition}
     Let $\varphi_0$ and $\varphi_1$ be sentences which are realized in $\mathcal{P}$.  We say $\varphi_0$ and $\varphi_1$ are \emph{strongly Weihrauch equivalent over $\mathcal{P}$}  iff  $\varphi_0$ is {strongly Weihrauch reducible over $\mathcal{P}$} to $\varphi_1$ and $\varphi_1$ is {strongly Weihrauch reducible over $\mathcal{P}$} to $\varphi_0$. 
\end{definition}

\begin{lemma}
    The two sentences ``$\forall C \exists \tilde{\mathbb{H}} \bigwedge_{i<k} \mathcal{R}_{\Delta_i,\mathbb{H}_{n+1}}(C,\tilde{\mathbb{H}})$" and ``$G$ has finite big Ramsey degree $k(G,n)$ in $\mathbb{H}_{n+1}$'' are strongly Weihrauch equivalent. 
\end{lemma}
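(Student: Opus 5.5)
The lemma is exactly the conjunction of the two strong Weihrauch reductions established in the two preceding lemmas, so the plan is to invoke the definition of strong Weihrauch equivalence over $\mathcal{P}$ and check its hypotheses.

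First, confirm that both sentences are realized in the relevant $\mathcal{P}$. For ``$\forall C \exists \tilde{\mathbb{H}} \bigwedge_{i<k} \mathcal{R}_{\Delta_i,\mathbb{H}_{n+1}}(C,\tilde{\mathbb{H}})$'' this is the corollary to Theorem~\ref{thm.FG}: apply the Ramsey Property for diaries $k$ times. For ``$G$ has finite big Ramsey degree $k(G,n)$ in $\mathbb{H}_{n+1}$'' it follows from the remark after the first reduction lemma, using Corollary~\ref{RPd}. Realization in any Turing-closed $\mathcal{P}$ that realizes one of the sentences then transfers through the reductions.

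Next, record the two directions. The first reduction lemma takes a coloring $C$ of diaries to the coloring $(i,C(\Delta_i))$ of copies of $G$. The second takes a coloring of copies of $G$ to the induced coloring of diaries. In each direction the solution map $\Psi$ is the identity on $\tilde{\mathbb{H}}$. Both $\Phi$ maps are computable by Theorem~\ref{dairies}, since the unique diary extending a copy of $G$ in $\widehat{\mathbb{H}}_{n+1}$ is computed by $D(G,\cdot)$. They are total on every coloring and every copy $\tilde{\mathbb{H}}$, so the totality clause of Definition~\ref{equiv} holds.

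The one point needing care is the first direction. There we need every diary $\Delta_i$ to actually occur in the resulting $\tilde{\mathbb{H}}$, so that the at most $k$ colors are forced to be one per diary and the second coordinate becomes constant on each $\Delta_i$. This is supplied by Corollary~\ref{RPd}, via Theorem~\ref{thm.B7}. With it, both reductions hold, and the equivalence follows directly from the definition.
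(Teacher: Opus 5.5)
Your proposal is correct and matches the paper. The lemma is just the conjunction of the two preceding strong Weihrauch reductions, with $\Psi$ the identity on $\tilde{\mathbb{H}}$ in both directions. You also correctly identify Corollary~\ref{RPd} as the one delicate point, in the direction from diaries to big Ramsey degree: every $\Delta_i$ must occur in $\tilde{\mathbb{H}}$, so that the first coordinate of the coloring $(i,C(\Delta_i))$ already uses up all $k$ allowed colors.
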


%%%%  I had to rewrite this.  For the 1st we need to be more careful. The 2nd part is not correct. You need a computable set of tuples of G and Delta to say that and we do not have that.  And you merged the two proofs of 8.2 and 8.3. You also used "near" rather than "close" several times. 

% Lemma \ref{JCLE} and
%  Theorems
% \ref{thm.FG},
% \ref{thm.B7}, and 
% \ref{thm.closele}
% imply the following:

% \begin{theorem}\label{thm.onecolornearG}
% For each $n\ge 2$, for each  finite  $(n+1)$-clique free graph $G$, ($|G| > 1$), 
% there is a computable $2$-coloring 
% on the set of all close copies of $G$ in $\mathbb{H}_{n+1}$ so that any homogeneous $\tilde{\mathbb{H}}$ codes 
% $\emptyset^{|G|-1}$.  
% Furthermore, the statement 
% ``For all finite $G$, the each diary of $G$ has the Ramsey property in $\mathbb{H}_{n+1}$ and any two diaries for $\mathbb{H}_{n+1}$ are bi-embeddable"
% implies $ACA$ over $RCA_0$.
% %	For all finite $G$, ($|G| > 1$) the statement that $\mathcal{H}^{\tilde{G}}$ has the Ramsey Property  in $\mathbb{H}_{n+1}$ codes $\emptyset'$.  Hence this statement implies $ACA$ over $RCA_0$.
% \end{theorem}

%%%  I had to adjust this.

\subsection{Some remaining technical  questions}

All these questions are about possible improvements to Theorem~\ref{thm.closele} as there is plenty of room to improve Theorem~\ref{lem.D}. Given the nodes and neighborhoods involved in a diary we can extract a large tuple.  For example, just involving all the neighborhood sets would give a $(2|G|-2)$-tuple, $(n_{|G|-1}, n_{|G|-2}, \ldots n_0,j_0, \ldots j_{|G|-1})$. But even larger tuples are possible, depending on $G$ and $n$. 
(See (6) of Theorem \ref{thm.diary}.)
Now  Theorem~\ref{lem.D} holds  for this set of tuples and more.  
%{\color{blue} (ND:  All of this follows from my construction.  I will state it explicitly and prove it in the Appendix. Meaning the thing you want to improve here already follows from the work I did.  So I will add more exposition to  make it clear.)}
It is open whether we can do the same with Theorem~\ref{thm.closele}.  But if we can make this improvement then the set computed in Theorem~\ref{thm.onecolornearG} would be  $\emptyset^{(2|G|-2)}$ (or more) rather than $\emptyset^{(|G|-1)}$.

Given an $(m+1)$-clique free $G$ with at least two vertices, 
the number and the complexity of the diaries for $G$  grow as $n\ge m$ grows. It thus seems likely that there is some computable increasing unbounded sequence $(k_n)_{n\ge m}$ so that for each $n\ge m$ the statement, ``$G$  has finite big Ramsey degree in $\mathbb{H}_{n+1}$"
codes $\emptyset^{(k_n)}$.
At this time, this remains an open question. This involves extracting tuples of size $k_n+2$ from the diaries and using these tuples in Theorems~\ref{lem.D} and \ref{thm.closele} as suggested above.

Fix a diary $\Delta$ for $G$. If we could restricted the copies of $G$ used in Theorem~\ref{thm.closele} to those with fixed single diary $\Delta$ then we could positively answer the following question:  

\begin{question}
For each finite $G$ and for each diary $\Delta$ for $G$, does the sentence ``The Ramsey property holds for $\Delta$  
in $\mathbb{H}_{n+1}$" code $\emptyset^{(|G|-1)}$? 
%	Is there some way to prove for all finite $G$, the statement that $\mathcal{H}^{\tilde{G}}$ has the Ramsey Property  in $\mathbb{H}_{n+1}$ codes $\emptyset^{(k)}$, where $k = |{\tilde{G}}|+1$?   
\end{question}

\section{Coloring Nodes}\label{nodes}

\subsection{Neighborhood set colorings of nodes} 

This section presents mostly work due to Gill \cite{Gill23} but in a slightly different format and with some different questions. Critical to this section is the following theorem:

\begin{theorem}[Folkman \cite{MR268080}]
    For all $n>1$, for all $l$, there is a finite connected graph $G^{n,l}$ that does not contain a copy of $K_{n+1}$ and, for each $l$-coloring of $G^{n,l}$, there is a color $C$ where within the nodes of $G^{n,l}$ with color $C$ there is a copy of $K_n$.  
\end{theorem}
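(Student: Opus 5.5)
The plan is to prove Folkman's theorem by induction on $l$, the number of colors, using the standard ``vertex Folkman'' amplification. Fix $n>1$. For $l=1$, take $G^{n,1}=K_n$: it is connected, $K_{n+1}$-free, and any $1$-coloring has a monochromatic $K_n$. It is convenient to prove a slightly more flexible statement: for every $l$ and every finite $K_{n+1}$-free target graph $T$, there is a finite $K_{n+1}$-free graph $F(T,l)$ such that every $l$-coloring of $F(T,l)$ contains a monochromatic copy of $T$ (as a subgraph, not necessarily induced). Taking $T=K_n$ gives the theorem. If the construction produces a disconnected graph, we replace it by a connected component containing a monochromatic copy for some coloring; more simply, we join components by long paths of new vertices. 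Paths of length at least $2$ add no cliques of size $\geq 3$, and colorings restrict to the old vertices, so connectivity costs nothing.

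The inductive step is the main construction. Given $F_l=F(T,l)$ that works for $l$ colors, we build $F_{l+1}$ for $l+1$ colors. Let $m=|F_l|$. First take a graph $A$ that is \emph{$F_l$-arrowing for two-sided splits}, using the classical two-stage amalgamation:

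\begin{itemize}
\item Form the disjoint union of many copies of $F_l$, indexed by the vertices of an auxiliary graph.
\item For each $m$-tuple obtained by choosing one vertex from each of certain copies, add a new copy of $F_l$ whose vertices are identified with that tuple, so that edges are added only between vertices of distinct copies.
\end{itemize}

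The key control is that each new copy of $F_l$ meets each old copy in at most one vertex, and that the hypergraph of added copies is linear and of large girth. Such a hypergraph exists by the Erdős–Hajnal / Nešetřil–Rödl partite construction, which yields an $m$-uniform hypergraph of girth $>n+1$ and high chromatic number. Large girth ensures that every clique of the resulting graph lies inside a single copy of $F_l$, so the graph stays $K_{n+1}$-free.

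For the arrowing step, take an $(l+1)$-coloring. Single out color $l+1$. If some copy of $F_l$ avoids color $l+1$, it is $l$-colored, so it contains a monochromatic $T$ by induction. Otherwise every copy contains a vertex of color $l+1$. We arrange the hypergraph so that selecting one such vertex from each copy forces a full copy of $F_l$ (hence of $T$) in color $l+1$. Concretely, we use a hypergraph whose edges are copies of $F_l$ and whose chromatic number exceeds $2$ in the needed partite sense, so that a transversal of color-$(l+1)$ vertices must span an edge.

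The main obstacle is this final forcing step, together with preserving $K_{n+1}$-freeness. I would first try the Nešetřil–Rödl partite construction directly for vertex colorings: build a sequence of pictures indexed by the vertices of $F_l$, amalgamating along each vertex class using the pigeonhole principle (for vertex colorings, the Hales–Jewett step reduces to pigeonhole). Free amalgamation never creates new cliques, since every clique lies in a single picture, so $K_{n+1}$-freeness is automatic. This partite amalgamation is the approach I expect to carry through most cleanly, and it is where the real work lies.
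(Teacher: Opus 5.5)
The paper gives no proof of this statement; it quotes Folkman \cite{MR268080} and uses the result as a black box. Your argument therefore has to stand on its own, and as written it has two problems.

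First, the construction you actually describe does not justify $K_{n+1}$-freeness. That construction consists of disjoint ``old'' copies of $F_l$ plus ``new'' copies placed on transversals. Large girth of the hypergraph of \emph{added} copies says nothing about cycles that pass through an old copy. Suppose $a_1,\dots,a_n$ span a $K_n$ inside one old copy and $c$ lies in another old copy. The edges $ca_i$ can then come from $n$ different new copies that pairwise meet only in $c$. This is compatible with your girth condition and produces a $K_{n+1}$. You would need the girth condition for the hypergraph of all copies together.

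Second, you call the ``final forcing step'' the main obstacle and defer to a partite construction. In fact that step is already settled by the property you state.

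The fix is to use a single layer. Let $\mathcal{M}$ be an $m$-uniform hypergraph, $m=|F_l|$, that is linear, has no Berge $3$-cycles, and has $\chi(\mathcal{M})\ge 3$; such hypergraphs exist by Erd\H{o}s--Hajnal. Place a copy of $F_l$ on each hyperedge.
\begin{itemize}
\item Linearity makes the graph induced on each hyperedge exactly that copy.
\item Linearity together with the absence of Berge $3$-cycles puts every triangle, hence every clique, inside a single hyperedge. So $K_{n+1}$-freeness is inherited from $F_l$.
\item Given an $(l+1)$-coloring, let $S$ be the color-$(l+1)$ class. If some hyperedge misses $S$, you are done by induction. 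Otherwise the complement of $S$ contains no hyperedge, so non-$2$-colorability forces a hyperedge inside $S$. That hyperedge carries a whole copy of $F_l\supseteq T$ in color $l+1$.
\end{itemize}

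With this tool the induction is unnecessary. An $n$-uniform $\mathcal{M}$ with the same girth conditions and $\chi(\mathcal{M})>l$, with $K_n$ placed on each hyperedge, is already $G^{n,l}$.

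Your partite fallback also works and avoids Erd\H{o}s--Hajnal. But you should run it directly for $l$ colors, over a base of $l(n-1)+1$ parts with a copy of $K_n$ on every $n$-set of parts and pigeonhole at the end. Indexing the parts by the vertices of $F_l$ is not the right base.

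Finally, on connectivity: the right criterion is not a component that contains a monochromatic copy for \emph{some} coloring, but one that works for \emph{every} coloring. Such a component exists because $K_n$ is connected: otherwise you could combine bad colorings of all components into a bad coloring of the whole graph. Alternatively, simply add single edges between components. Each such edge is a bridge, a bridge lies in no triangle, and $n+1\ge 3$, so no $K_{n+1}$ is created.
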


  We will choose an enumeration $\{g_0,g_1, \ldots \}$ of $G^{n,l}$ such that, for all $s$, the graph $\{g_0,g_1, \ldots, g_s \}$ is connected.  Since in $\mathbb{H}_{n+1}$ no neighborhood contains a copy of $K_n$, inside $\tilde{\mathbb{H}}$, $G^{n,l}$ cannot be $l$ colored by the identity neighborhood set coloring, i.e., the coloring where each neighborhood $N_i$ is colored $i$.

  \begin{corollary}
       If we use the greedy algorithm to construct a copy of $G^{n,l}$ inside $\tilde{\mathbb{H}}$,  at some stage we must select a node $v_{n,j}$ where $n>l$.
  \end{corollary}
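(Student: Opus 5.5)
We argue by contradiction, using the identity neighborhood set coloring as the paper suggests just before the statement. Suppose the greedy algorithm (Subsection~\ref{greedy}), run inside $\tilde{\mathbb{H}}$ on the fixed enumeration $\{g_0,g_1,\dots\}$ of $G^{n,l}$, never selects a node $v_{m,j}$ with $m>l$. The algorithm always succeeds inside a copy of $\mathbb{H}_{n+1}$, so it produces nodes $w_0,w_1,\dots$ in $\tilde{\mathbb{H}}$ whose induced graph is order-isomorphic to $G^{n,l}$. Under our assumption, every $w_s$ lies in one of $N_0,N_1,\dots,N_l$.

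Next we transfer the neighborhood labels back to $G^{n,l}$. Color $g_s$ by the index $m$ of the neighborhood set $N_m$ containing $w_s$. Because $g_s\mapsto w_s$ is a graph isomorphism, any $K_n$ contained in one color class of $G^{n,l}$ would give a copy of $K_n$ inside a single neighborhood set $N_m$. The paper has already observed (Subsection~\ref{Neighbor}) that no neighborhood set of $\mathbb{H}_{n+1}$ contains a copy of $K_n$. So this coloring has no monochromatic $K_n$, and it would contradict Folkman's theorem once we know it uses at most $l$ colors.

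The main obstacle is an off-by-one. The sets $N_0,\dots,N_l$ are $l+1$ colors, while $G^{n,l}$ is only guaranteed to defeat $l$-colorings. We must not fix this by passing to $G^{n,l+1}$, since the statement concerns $G^{n,l}$ itself. Instead we will show that one of the $l+1$ labels can be absorbed into another without creating a monochromatic $K_n$. The first thing to try is to exploit the structure of the greedy copy inside $\tilde{\mathbb{H}}$. Every selected node has index larger than all earlier ones and is forced to respect adjacency to the earlier $w_i$. The enumeration of $G^{n,l}$ keeps every initial segment connected, so each $w_{s+1}$ is adjacent to some earlier $w_i$. We will try to use these facts to show that all $w_s$ avoid one fixed neighborhood among $N_0,\dots,N_l$. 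The natural candidate is $N_0$: the lowest neighborhood sets below the least node of $\tilde{\mathbb{H}}$ contribute no nodes, or contribute only nodes that can be grouped with a neighboring label. If this works, the copy meets at most $l$ of the sets $N_0,\dots,N_l$, and the previous paragraph yields a monochromatic $K_n$ in some neighborhood set, which is impossible.

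If no label can be eliminated this way, the fallback is to re-read ``$n>l$'' in the corollary as a bound on the neighborhood index counted from the first neighborhood actually met by $\tilde{\mathbb{H}}$, and to run the same $l$-color Folkman argument. Either way, the core of the argument is the identity neighborhood set coloring, Folkman's theorem, and the absence of $K_n$ inside any single neighborhood set. The only delicate point, and the one that needs checking, is that at most $l$ distinct labels occur.
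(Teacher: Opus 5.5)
Your core argument (identity neighborhood set coloring, Folkman's theorem, no $K_n$ inside a single neighborhood set) is exactly the paper's one-line justification. You are also right that $N_0,\dots,N_l$ give $l+1$ labels, not $l$.

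The gap is the step meant to cut this down to $l$ labels. Neither avoiding $N_0$ nor merging two labels is available. In fact no argument using only the stated Folkman property of $G^{n,l}$ can give a selected node in some $N_m$ with $m>l$, because that conclusion can fail. Here is a counterexample:
\begin{itemize}
\item Take $n=2$, $l=1$ and $G^{2,1}=K_2$. This graph is connected, triangle-free, and has a monochromatic edge under every $1$-coloring.
\item By the extension property, pick $a\in N_0$ not adjacent to $\hat v_1$. Then pick $b>a$ adjacent to $a$ and to $\hat v_1$ but not to $\hat v_0$, so $b\in N_1$.
\item Since $a,b$ are adjacent (as $\hat v_0,\hat v_1$ are), greedily copying the standard enumeration with $v_0=a$, $v_1=b$, and continuing with larger indices, produces a copy $\tilde{\mathbb{H}}$ whose two least nodes are $a<b$.
\item Building $K_2$ greedily inside this $\tilde{\mathbb{H}}$ (taking least indices) selects $a$ and then $b$.
\end{itemize}
Both labels $0$ and $1$ occur, so $N_0$ is not avoided. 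Merging the two labels yields a monochromatic edge. No selected node lies in any $N_m$ with $m>1$. Your fallback of re-reading ``$n>l$'' changes the statement rather than proving it.

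There is also a smaller slip of the same kind. By convention $\hat v_0$ carries label $0$, but it is not in $N_0$. Moreover, $\hat v_0$ together with a $K_{n-1}$ from $N_0$ is a $K_n$ with a single label. Grouping $\hat v_0$ with $N_1$, none of whose nodes is adjacent to it, repairs this.

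What the argument actually proves is that the selected nodes cannot all lie in $l$ neighborhood sets. That is also all the paper's sentence ``$G^{n,l}$ cannot be $l$ colored by the identity neighborhood set coloring'' asserts; the paper glosses over the $l$ versus $l+1$ count. To get a selected node in some $N_m$ with $m>l$, run the greedy algorithm on $G^{n,l+1}$, which is exactly the move you ruled out. This shift is harmless for the only use of the corollary, in the proof of Gill's theorem. There $l=l(e)$ is arbitrary, and $\mathcal{R}_e$ can just as well build a copy of $G^{n,l(e)+1}$.
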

    
 The neighborhood set coloring  is strictly determined by the neighborhoods.   We call a node coloring is a \emph{stable neighborhood set coloring} iff, for each neighborhood set  $N_i$, almost all nodes in $N_i$ get the same color.

\begin{theorem}[Gill \cite{Gill23}]
 There is a stable neighborhood set $2$-coloring of  $\mathbb{H}_{n+1}$ with no computable (c.e.) homogeneous copy of $\mathbb{H}_{n+1}$. 
 \end{theorem}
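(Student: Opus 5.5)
We build a computable stable neighborhood set $2$-coloring $c$ by a finite-injury priority construction. The requirements are
\[
\mathcal{R}_e:\ W_e \text{ is not (the node set of) a copy of } \mathbb{H}_{n+1} \text{ homogeneous for } c.
\]
Handling c.e.\ sets covers computable copies too. The coloring is decided stagewise, and stability will hold because each neighborhood set $N_i$ has its color changed only finitely often on an initial segment; from some point on, every new node of $N_i$ gets one fixed default color $d_i$. Since $c$ need only be computable, a node's color is fixed when it is first enumerated. What $\mathcal{R}_e$ may do is change the default color $d_i$ for \emph{future} nodes of a neighborhood $N_i$.

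The tool for diagonalization is Folkman's theorem together with the Corollary following it. Fix the Folkman graph $G^{n,2}$ with its connected enumeration. If $W_e$ is a copy of $\mathbb{H}_{n+1}$, then running the greedy algorithm for $G^{n,2}$ inside $W_e$ (searching $W_e$ as it is enumerated) must produce a copy of $G^{n,2}$. Any $2$-coloring of that copy has a monochromatic $K_n$, and no single neighborhood set contains a $K_n$.

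The plan for $\mathcal{R}_e$ is as follows. $\mathcal{R}_e$ picks a large threshold $l_e$ and restrains the default colors of neighborhoods $N_i$ with $i\le l_e$. It waits to see nodes of $W_e$ of some color, say that of the first node $w$ enumerated into $W_e$. It then finds, inside $W_{e,s}$, a finite configuration witnessing the extension property relative to $w$: a set $D$ of nodes, without $K_n$, for which $W_e$ must eventually contain a node connected to all of $D$ and disconnected from a finite $F$. All such future nodes lie in neighborhoods determined by $\min D$. Hence $\mathcal{R}_e$ can switch the default color of that neighborhood, and of the finitely many other candidate neighborhoods with index at most $\max D$, to the color opposite to $c(w)$. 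The extension property then forces $W_e$ to contain a node of the wrong color, or else $W_e$ is not a copy of $\mathbb{H}_{n+1}$.

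The Folkman graph keeps this finite. Neighborhoods that already contain infinitely many nodes of $W_e$ cannot all be handled, but the Corollary shows that a greedy copy of $G^{n,l}$ in $W_e$ must leave the first $l$ neighborhoods. So $\mathcal{R}_e$ only ever needs to act on boundedly many neighborhoods. Each act injures only lower-priority requirements, which restart with larger thresholds. By induction, each $\mathcal{R}_e$ acts finitely often, every $d_i$ changes finitely often, and so $c$ is stable.

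The main obstacle is arranging that the forced node appears \emph{after} the default switch. If $W_e$ has already enumerated a correctly colored witness node in that neighborhood before the switch, the attack fails. I would handle this by using the extension property with $F$ containing all nodes of index up to the current stage, which forces a fresh node of large index. By then the switch has taken effect, and the node's neighborhood index is bounded by $\max D$, which is finite information. Checking that $D$ can always be chosen $K_n$-free with this bounded neighborhood is where Folkman's theorem, and the argument in the proof of Theorem~\ref{local1}, would be used.
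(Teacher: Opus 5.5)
Your framework (finite injury, a vertex's color fixed at the stage it is enumerated, requirements changing defaults only for future vertices, Folkman plus the greedy algorithm) matches the paper's. The diagonalization step itself, however, has a gap.

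A vertex of $W_e$ connected to all of $D$ lies in some $N_i$ with $i\le\min D$. Since $F$ must lie inside $W_e$, nothing stops that vertex from being adjacent to a low vertex $\hat{v}_i\notin W_e$. So it can land in some $N_i$ with $i\le l_e$, a neighborhood $\mathcal{R}_e$ may not recolor: it is restrained, and letting every requirement recolor low neighborhoods would destroy stability.

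This genuinely happens. Build a copy greedily, requiring every new vertex adjacent to a fixed $w$ to also be adjacent to $\hat{v}_0$, and every other new vertex to be disconnected from $\hat{v}_0$. One checks the sets fed to the extension property stay $K_n$-free. The result is a copy of $\mathbb{H}_{n+1}$ in which every neighbor of $w$ lies in $N_0$, so no configuration $D\ni w$ ever forces a vertex into a neighborhood above $l_e$.

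Relatedly, fixing $G^{n,2}$ and invoking a monochromatic $K_n$ under $2$-colorings yields nothing, since a homogeneous $W_e$ is monochromatic anyway. Folkman's theorem must be applied to the coloring of a copy of $G^{n,l_e}$ by neighborhood index. Its role is not to make $D$ $K_n$-free.

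You cite the Corollary, but only to bound how many neighborhoods get touched. The missing idea, which is the heart of the paper's proof, is to pin down the escaping vertex's neighborhood \emph{in advance} via adjacency:
\begin{enumerate}
\item $\mathcal{R}_e$ runs the greedy algorithm for $G^{n,l_e}$ inside $W_e$, using an enumeration whose initial segments are connected.
\item It accepts only vertices of index above a threshold, which is raised to the current stage after each selection.
\item Whenever it selects $v_{\tilde{j}}$, it switches the defaults of all $N_i$ with $l_e<i\le\tilde{j}$ to the color opposite to that of $W_e$. After the first selection, $l_e$ is raised above that vertex's index.
\end{enumerate}
The Corollary gives a selected vertex $v_{m,j}$ with $m>l_e$. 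It is not the first one, so by connectivity it is adjacent to an earlier selected $v_{\tilde{j}}$, whence $m\le\tilde{j}$. So $N_m$ was switched when $v_{\tilde{j}}$ was chosen. The threshold ensures $v_j$ was colored after that switch, so $v_j$ has the wrong color.

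Your timing fix also needs adjusting. A vertex outside $W_{e,s}$ need not have large index, since $W_e$ is only c.e. Instead, use that each extension type has infinitely many realizations in a copy of $\mathbb{H}_{n+1}$, so the search can demand an index above the current stage.
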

 
 \begin{proof}
   This is a stagewise  argument. At stage $s$ we will color $v_s = v_{n,s}$ by the current default coloring of the neighborhood $N_n$. Start all neighborhood sets with the default color BLUE. The $e$th requirement, $\mathcal{R}_e$, is to ensure that $W_e$ is not a homogeneous copy of $\mathbb{H}_{n+1}$. If, at any stage $s$, $W_{e,s}$ is not homogeneous, $\mathcal{R}_e$ is met and no longer acts. Assume all $x \in W_{e,s}$ have color BLUE (if RED the coloring needed is dual). 
   
   With the exception of its first node, $\mathcal{R}_e$ must work with the neighborhoods above $l_{e,s}$. For each $e$, let $l_{e,0}=0$. $\mathcal{R}_e$ will be tasked with the job of using the greedy algorithm, see Section~\ref{greedy}, to construct a copy of $G^{n ,l(e,s)}$ inside $W_e$.  At stage $s$, $\mathcal{R}_e$ may only select nodes from $W_{e,s}$ and nodes which are larger than the threshold function $t_{e,s}$. Let $t_{e,0}=0$.    At a stage $s$ where such a selection for $\mathcal{R}_e$ is possible and $e$ is the smallest $\tilde{e}$ where such a selection is possible, we select a new node $v_{j}$ for $\mathcal{R}_e$'s copy of $G^{n,l(e,s)}$, change the default color of $N_i$ to RED, for all $i$ such that $l(e,s)<i\leq j$, let $t_{e,s+1}=s+1$, and for all $e' > e$, let $l(e',s+1)=t_{e',s}= s+1$ and $\mathcal{R}_{e'}$ needs to restart its construction of $G^{n,l(e',s)}$ with all new nodes. Moreover if this is the first node that $\mathcal{R}_e$ has chosen, we will also let $l(e,s+1)=s+1$ (this single node can be used in the greedy algorithm for the construction of any finite $G$). We say that $\mathcal{R}_e$ \emph{acts} and that $\mathcal{R}_{e'}$ was \emph{injured}.  
   
    Assume $\mathcal{R}_e$ is never injured after some stage $t$.  Now only consider stages $s > t$.  Then $l(e,s)$ can only increase once more than the first node of $G^{n,l(e,s)}$ is chosen. Hence $\lim_s l(e,s)= l(e)$ exists. Since $G^{n,l(e)}$ is finite,  $\mathcal{R}_e$ only acts finitely often and hence $\mathcal{R}_{e+1}$ is only injured finitely often.  By the above Corollary, at some stage $s$, $\mathcal{R}_{e}$ must select a node $v_{n,j}$ where $n>l(e)$. This cannot be the first node selected. By our enumeration, there must be an earlier stage $\tilde{s}$ where a node $v_{\tilde{j}}$ which is connected to $v_{n,j}$ has been chosen. After stage $\tilde{s}$, all the nodes $v_{n',j'}$ where $l(e) < n' \leq \tilde{j}$ and $t_{e,\tilde{s}}< j'$, are colored RED. Hence $v_{n,j}$ must be colored RED.  Hence $\mathcal{R}_e$ is met. 
  \end{proof}

\subsection{Questions about colorings of nodes}

We can turn a (stable) coloring of pairs, $C(n,s)$, into a (stable neighborhood) coloring by coloring the node $v_{n,s}$ by $C(n,s)$.  Assume that we have a homogeneous copy $\tilde{\mathbb{H}}$ for a stable coloring of pairs, $C(n,s)$. If we could improve Theorem~\ref{local1} (3) to effectively (in $\tilde{\mathbb{H}}$ ) find ${n}$, then we could compute a homogeneous set for this stable coloring of pairs using $\tilde{\mathbb{H}}$. Just repeatedly apply Theorem~\ref{local1} (3) to get an infinite set of these ${n}$ and then thin to get a homogeneous set. But, by Lemma~\ref{notcomputable}, this improvement in Theorem~\ref{local1} (3) is not possible. So how to use $\tilde{\mathbb{H}}$ to find homogeneous set for this stable coloring is still open. 

\begin{question}
	Is $(S)RT^2_2$ (strongly) Weihrauch reducible to the statement that "nodes have the Ramsey Property in $\mathbb{H}_{3}$"?
\end{question}

 Corollary~\ref{H3nodes} tells us that if we have a computable coloring of nodes in $\mathbb{H}_3$, then we get a $0'$-computable homogeneous set.  It is not known if we can do better.

\begin{question}[Gill]
	Does the statement that ``nodes have the Ramsey Property in $\mathbb{H}_{3}$" satisfy cone avoidance?  Where in the reverse mathematics zoo does this statement live?  
\end{question}

By the extension property, every node in $\mathbb{H}_{n+1}$ can be largely extended to be in a $k$-clique in $\mathbb{H}_{n+1}$, where $k\leq n$.  We will use this to identify some nodes with $k$-tuples. Assume $v_i$ is the largest node in some $k$-clique, $\{ v_{i_0} < v_{i_1} < \ldots < v_{i_{k-1}} = v_i \}$ and, for any other $k$-clique, $\{ v_{\tilde{\imath}_0} < v_{\tilde{\imath}_1} < \ldots < v_{\tilde{\imath}_{k-2}} < v_{i_{k-1}} = v_i \}$, the $k$-tuple $\{i_0 < i_1 < \ldots <i_{k-2} < i \}$ is lexicographically below the $k$-tuple $\{\tilde{\imath}_0 < \tilde{\imath}_1 < \ldots <\tilde{\imath}_{k-2} < i \}$. In this case, we say that $i$ codes the $k$-tuple $\{i_0 < i_1 < \ldots <i_{k-2} < i \}$. One can show that the set of these codes is largely $2$-extendable in the standard copy of $\mathbb{H}_{n+1}$. If we could do the same in an arbitrary subcopy $\tilde{\mathbb{H}}$, that would prove a result analogous to Theorem~\ref{CodesJump}.   But unfortunately we are just left with a question: 

\begin{question}
	Does the statement that ``nodes have the Ramsey Property in $\mathbb{H}_{n+1}$" code $\emptyset^{(n-1)}$ and imply $ACA$ over $RCA_0$?
\end{question}

\section{Appendix:  Diaries for the Henson graphs}\label{appendix}

This appendix provides a constructive  definition of diary for the Henson graphs and includes a construction of a  diary satisfying Theorems  \ref{dairies} and \ref{lem.D}.
The following sort of enumeration was used in  \cite{DobrinenJML20} and \cite{DobrinenJML23} and has been used throughout this paper.

\begin{definition}[Standard Enumeration of $\mathbb{H}_{n+1}$]\label{defn.standardenum}
Fix $n\ge 2$. 
A {\em standard enumeration} of
$\mathbb{H}_{n+1}$  is a  computable enumeration with the following additional property:
For each pair  $i<j<n$, $\hat{v}_j$ is connected to $\hat{v}_i$.
For each $j\ge n$, 
$\hat{v}_j$ forms an $n$-clique with its $n-1$ immediate predecessors; that is, 
all pairs of nodes among 
$\{\hat{v}_{j-n+1}, \dots,\hat{v}_j\}$ are connected. 
\end{definition}

This enumeration ensures  that  whenever a graph $G$ consists of nodes $\hat{v}_j$ with  all $j\ge n$,  then  each  node in $G$ forms an $n$-clique with nodes in  $\mathbb{H}_{n+1}$.
In particular, $\hat{v}_{j+1}$ is connected to $\hat{v}_j$ for each $j\ge 0$.
This serves to reduce upper bounds on big Ramsey degrees from the outset and is a component of the notion of diary, below.

The order on the nodes in  $\mathbb{H}_{n+1}$ induces a tree of binary sequences as follows.
Given $j<\om$, we associate  $\hat{v}_j$  with the
binary  sequence of length $j$ such that for $i<j$,  the $i$-th entry of $\hat{v}_j$ is $1$ iff $\hat{v}_j$ is connected to $\hat{v}_i$.
Abusing notation, we  denote this sequence again by $\hat{v}_j$.
Then
for each $i<j$, $\hat{v}_j(i)=1$ iff  $\hat{v}_j$ is connected to $\hat{v}_i$
and 
$\hat{v}_j(i)=0$ iff  
$\hat{v}_j$ is disconnected from $\hat{v}_i$.
Let $T\sse 2^{<\om}$ denote the collection of all initial segments of the binary sequences $\hat{v}_j$, $j<\om$.
Then  $T$ is a tree, the tree order being initial segment  $\sse$, and $T$ inherits the usual  lexicographic order from $2^{<\om}$.
The vertices of $\mathbb{H}_{n+1}$ appear as specific nodes in the tree $T$, $\hat{v}_j$ appearing at level $j$ of $T$.
Note that each subgraph $G\sse\mathbb{H}_{n+1}$ induces the  subtree of $T$ consisting of all  initial segments of the vertices in $G$.
In this appendix we shall use {\em vertex} to describe a  $\hat{v}_i$ in $\mathbb{H}_{n+1}$, and {\em node} to describe a node in the tree $T$.
When referring to a  vertex  $\hat{v}_i$ as a  node in $T$, we may also call it a {\em vertex node}.\footnote{In \cite{DobrinenJML20} and subsequent literature, such trees are called {\em coding trees}, and the   nodes in the tree corresponding the vertices $\hat{v}_j$ are called {\em coding nodes}.} 
Note that $\hat{v}_0$ is the empty sequence, and  $T$ has no terminal nodes. 
The tree $T$ 
has the following important property:
\begin{enumerate}
\item[$(*)$]
Given $k<\om$,
for each partition of 
$\{\hat{v}_i:i<k\}$ into two disjoint sets $V_0,V_1$  where $\mathbb{H}_{n+1}\upharpoonright V_1$ contains no $n$-clique,
there is a unique $s$ in $T$ of length $k$ so that for each $j\ge k$, 
$\hat{v}_j$ extends $s$ in $T$ iff 
$\hat{v}_i$ is disconnected with each vertex in $V_0$ and 
connected with every vertex in $V_1$.
\end{enumerate}

We point out that 
 $T$ is the tree of
all  (quantifier-free, consistent, complete)
$1$-types over finite initial segments of $\mathbb{H}_{n+1}$.
That is, for each $k<\om$, there is a 1-1 correspondence between the nodes in $T$ of length $k$ and the set of $1$-types over the finite graph $\mathbb{H}_{n+1}\upharpoonright\{\hat{v}_i:i< k\}$.
As such, 
$(*)$ can be restated model-theoretically:
For each $k<\om$, for each  $1$-type $\varphi(y; \hat{v}_0,\dots,\hat{v}_{k-1})$ over $\mathbb{H}_{n+1}\upharpoonright \{\hat{v}_i:i< k\}$ (where $y$ denotes a variable), there is a unique node $s\in T$ of length $k$ so that for each $j\ge k$, 
$\varphi(\hat{v}_j; \hat{v}_0,\dots,\hat{v}_{k-1})$ holds iff
 $\hat{v}_j$ extends $s$ in $T$.

The tree structure $(T, \sse)$ forms the bedrock for the notion of diary.
Splitting nodes in $T$ are the 
 second notion involved in diaries.
A  pair of vertices $\hat{v}_j,\hat{v}_k$  
 {\em splits  at level $\ell$} iff 
$\ell<\min(j,k)$ is minimal such that 
$\hat{v}_j(\ell)\ne \hat{v}_k(\ell)$.
In terms of the graph, this means that 
exactly one of $\hat{v}_j,\hat{v}_k$ is connected to $\hat{v}_\ell$, and
for each $i<\ell$, 
$\hat{v}_j$ is connected to $\hat{v}_i$ iff $\hat{v}_k$  is connected to $\hat{v}_i$.
We say that $\hat{v}_j,\hat{v}_k$  
{\em split}  iff  they split at level some level $\ell<\min(j,k)$.
Not every pair of vertices in $\mathbb{H}_{n+1}$  splits. There are many pairs $j<k$ so that for all $i<j$, $\hat{v}_k$ is connected to $\hat{v}_i$ iff $\hat{v}_j$ is connected to $\hat{v}_i$;
this corresponds to $\hat{v}_i$ being an initial segment of $\hat{v}_j$ in the tree $T$.
However, 
diaries will correspond only to subgraphs of $\mathbb{H}_{n+1}$ in which each pair of vertices splits. 
A node $s$ in a subset $S$ of $T$  is a {\em splitting node}  in $S$ iff both $s^{\frown}0$ and $s^{\frown}1$ are extended by some nodes in $S$.

The third notion involved in diaries is that of  consecutive age-changes.
A set $X\sse T$ is called a {\em level set} iff $X\sse 2^\ell$ for some $\ell<\om$.
Given a level set $X$, let  $\ell(X)$ denote the length of its nodes, and call it 
the {\em level} of $X$.
When we write a level set as $X=\{x_i:i<p\}$, it is assumed that the $x_i$ are all distinct, so $X$ has size $p$.
For $\ell<\ell(X)$, $X\upharpoonright\ell'$ denotes $\{x\upharpoonright\ell':x\in X\}$, the collection of nodes in $X$ truncated to  length $\ell'$.
This of course may have smaller size than $X$.

\begin{definition}\label{defn.Kmp}
For $1\le m< n$,
we define $K(m)$ to consist of those level sets 
 $X=\{x_i:i<p\}\sse T$ for which there is 
 an  $m$-clique  $\{\hat{v}_{k_q}:q<m\}$
where 
$k_0<\dots<k_{m-1}$, 
$\ell(X)=k_{m-1}+1$,  and 
for each $i<p$ and $q<m$, $x_i(k_q)=1$.
We say that such an  $m$-clique  {\em witnesses} that $X$ is in $K(m)$.

A level set $X=\{x_i:i<p\}$ is in $\AC(m,p)$ iff $X$ is in $K(m)$ and
for all $\ell'<\ell(X)$, $X\upharpoonright\ell'\not\in K(m)$.
We say that $X$ is an {\em age-change} iff $X\in \AC(m,p)$ for some $1\le m< n$  and  $1\le p  \le n+1-m$.
\end{definition}

We point out that age-changes do not occur for $p>n+1-m$ (see \cite{Balko7}).
If an $m$-clique  $\{\hat{v}_{k_q}:q<m\}$  witnesses that $X$ is in $K(m)$, then 
 any vertex  $\hat{v}_j$ 
 extending any node  in $X$ is connected to every member of the $m$-clique $\{\hat{v}_{k_q}:q<m\}$.
As $m$ increases, the possible connections between vertices in $\mathbb{H}_{n+1}$ extending the members of $X$ decreases. 
In particular, if $X$ is in $K(n-1)$, then 
any  collection of vertices in $\mathbb{H}_{n+1}$  extending members of $X$ must form an anticlique, since each such vertex forms an $n$-clique with $\{\hat{v}_{k_q}:q<n-1\}$.

 The notion of ``incremental coding trees" in 
 \cite{DobrinenJML20}  and \cite{DobrinenJML23}  led to the abstract notion of ``consecutive age-changes" for the larger class of  structures considered in  \cite{Balko7}.
 Here, we  give a constructive approach for  Henson graphs.

\begin{definition}[Consecutive age-change]\label{defn.conagechange}
For the following, 
$1\le m<n$ and 
 $1\le p  \le n+1-m$.
For $p=1$, for each $1\le m< n$ 
we define $\Con(m,1)=\AC(m,1)$;  
every  age-change  in $\AC(m,1)$ is {\em consecutive}.
For  $p\ge 2$,
we define $X\in \AC(m,p)$ to be a 
{\em consecutive age-change}, and write $X\in\Con(m,p)$,
 iff   
the following hold:
\begin{enumerate}
\item
For  each proper subset $Y\subset X$,
there is some (unique) $\ell<\ell(X)$ so that $Y\upharpoonright \ell$ is  
in $\Con(m,p')$, where $p'$ is the size  of $Y\upharpoonright \ell$.
\item
If $m\ge 2$, 
then for each $1\le m'<m$ and
 each $Y\sse X$ 
there is an $\ell<\ell(X)$ such that
 $Y\upharpoonright \ell$ is in $\Con(m',p')$, where $p'$ is the size of  $Y\upharpoonright \ell$.
\end{enumerate}
\end{definition}

To see why each age-change in $\AC(m,1)$ is  consecutive, note that 
  $X\in\AC(m,1)$
implies that
for each $1\le m'<m$ 
 there is an $\ell<\ell(X)$  such that $X\upharpoonright \ell$ is in $\AC(m',1)$; also, $X$
 being an age-change implies that  there is no
$\ell<\ell(X)$ so that  $X\upharpoonright \ell\in\AC(m,1)$.
Thus, all age-changes in $X$  for smaller $m'$ have already taken place at levels below $\ell(X)$.
It follows from our standard enumeration of $\mathbb{H}_{n+1}$ that  $\hat{v}_j$ is in $\Con(n-1,1)$, for each $j\ge n-1$.\footnote{This is called a {\em maximal path} in \cite{Balko7}.}

The fourth and final ingredient of diaries for Henson graphs involves so-called `controlled coding triples' in \cite{Balko7}.   
Here, we present an equivalent but more user-friendly version of this notion for Henson graphs.

\begin{definition}\label{defn.ccn}
Given a vertex $\hat{v}_j$ and a node $x\in T$ of length $j+1$ not extending $\hat{v}_j$, let  $1\le m<n$ be maximal such that $x\in K(m)$.
If  $x(j)=0$,
we say that  $x$ is {\em reduced   at $\hat{v}_j$} iff  for some $\ell<j$, the level set   $\{\hat{v}_j\upharpoonright \ell, x\upharpoonright \ell\}$ is in $K(m)$.
If  $x(j)=1$,
we say that  $x$ is {\em reduced   at $\hat{v}_j$} iff  for some $\ell<j$, the level set   $\{\hat{v}_j\upharpoonright \ell, x\upharpoonright \ell\}$ is in $K(m-1)$.
A level set $X$  consisting of nodes of length $j+1$ is  {\em reduced at
 $\hat{v}_j$}  iff 
 each $x\in X$   is  reduced
at $\hat{v}_j$.
\end{definition}

For example, in $\mathbb{H}_3$, 
suppose  $x$  is a node in the tree $T$  with length greater than $j$ and $x(j)=0$.
If  $x$ and $\hat{v}_j$
 are in different neighborhoods   then   $x$ is {\em reduced}  at $\hat{v}_j$ iff there is some $i<j$ such that $x(i)=\hat{v}_j(i)=1$; that is,
the pair $\{\hat{v}_j, x\upharpoonright j\}$ is  a member of $K(2)$.
If  $x$ and $\hat{v}_j$ are in the same neighborhood, then  they already have a common initial segment in $K(2)$.

%{\color{red}  The controlled coding triples are a beast and it is not in our best interests to define them here since there are many of them.  I think the best approach is to just say that there are finitely many things to add just before the levels of the terminal coding nodes and that we can do so computably.  Also, note that the diary I construct in my arxiv paper does exactly what we want in this paper.}

\subsection{Diaries}
We give a constructive description  of   the expanded structures that  exactly characterize
the big Ramsey degrees  in $\mathbb{H}_{n+1}$, $n\ge 2$.
The terminology  has  presently converged to call these expanded structures
{\em diaries}.
(Previous  terminology for diaries of various infinite structures includes {\em canonical partitions}, {\em similarity types}, and {\em embedding types}.)
%The diary of a subgraph $G$ of $\mathbb{H}_{n+1}$ is induced by    particular information about how $G$ are connected to nodes in $\mathbb{H}_{n+1}$.
%Not every subgraph of $\mathbb{H}_{n+1}$ will induce a diary. But every diary will correspond to infinitely many subgraphs of $\mathbb{H}_{n+1}$.  Importantly,  we will make a particular copy of $\mathbb{H}_{n+1}$ that is  itself a diary, and hence, all of its subgraphs will induce  diaries. 

Simple descriptions of diaries for $\mathbb{H}_3$ were given in \cite{DobrinenH_3ExactDegrees20} and  \cite{Dobrinen_ICM}.
Diaries were defined abstractly in Definition 3.4.1 in
\cite{Balko7} for all 
universal ultrahomogeneous structures obtained as \Fraisse\ limits of 
free amalgamation classes of finite structures in finitely many binary relations with finitely many   forbidden  finite, irreducible substructures.
%Example 3.4.4 in \cite{Balko7} gives a description  of diaries for $\mathbb{H}_3$ in the terminology of that paper; however, while that description simplifies Definition 3.4.1 in \cite{Balko7}, one still has to plod  through much notation before gaining an understanding.
%Recently,a simple description of diaries for $\mathbb{H}_4$ is given in \cite{new paper}. (As I had found, their description has some mistakes.)
The  description of 
diaries in \cite{Balko7} 
include bounded  versions of 
the notions
of  ``incremental" coding trees (Definition 9.1, \cite{DobrinenJML23})
and   the notion of ``strict similarity type"  (Definition 9.4, \cite{DobrinenJML23})
developed in \cite{DobrinenJML20} for $\mathbb{H}_3$ and in \cite{DobrinenJML23} for all $\mathbb{H}_{n+1}$.
Two  modifications of these notions, namely,  the bound $p$ in Definition \ref{defn.Kmp}  that depends on both $n$ and $m$, 
and  the addition of so-called  ``controlled coding triples", which we construct via reduced pairs here,
determine diaries.
We say that a node in $T$ is in the {\em spine} of $T$ if it is a finite sequence of $0$'s.

We now define diaries for Henson graphs. 

%describe the modifications of  work in \cite{DobrinenJML23}  that  produce diaries for the graphs $\mathbb{H}_{n+1}$, giving a more user-friendly approach than in \cite{Balko7}.

\begin{definition}\label{def.diary}
Let $G=\{\hat{v}_{j_i}:i<g\}$, ($1\le g\le \om$),  be  a subgraph of $\mathbb{H}_{n+1}$.  
We say that  $G$  {\em induces  a   diary}  iff 
letting $A=A(G)$ denote the meet-closure  in $T$ of the vertices $\{\hat{v}_{j_i}:i<g\}$,
the following hold:
\begin{enumerate}
\item
Each pair of  vertices  in $G$ splits.
Hence,  the vertices in $G$  are exactly the terminal nodes in $A$.
\item 
Each age-change in $A$ is consecutive.
\item
Each level of $A$ has at most one of three types of 
events:
\begin{enumerate}
\item[(a)]
Exactly one splitting node.
This can either be the meet of a vertex node with the spine, or a splitting node that is the meet of two vertex nodes in $A$.
Moreover, except for the one node that extends the splitting node by $1$, all other nodes at this level extend by $0$ (ensuring there are no age-changes, except  when the splitting node  is splitting off from the spine). 
\item[(b)]
Exactly one  consecutive age-change. In this case, the set of nodes involved in the consecutive age-change and the  type of age-change, $\Con(m,p)$,   are recorded.
\item[(c)]
A vertex, say $\hat{v}_{j_i}$. In this case, for each $i<k<g$, the value of $\hat{v}_{j_k}(|\hat{v}_{j_i}|)$ is recorded.
\end{enumerate}
The {\em interesting levels} of $A$
are those levels 
in which an event of {\em type} (a), (b), or (c) takes place.
\item
In (3c), if $\hat{v}_{j_i}$ is the vertex in a given interesting level, then 
letting $\ell=|\hat{v}_{j_i}|$,
each node $x$ in $A$ of length greater than $\ell$ with $x(\ell)=0$ is reduced.
These reductions show up as consecutive age-changes as part of (3b).
\end{enumerate}

 Let    $L(A)$ denote the set of those $\ell$ for which $A\upharpoonright\ell$ is an  interesting level of $A$, and let $L=|L(A)|$.
If  $G$  induces a diary, then  the {\em diary}
 of $G$,  $\Delta=\Delta(G)$,  is defined to be 
the subtree of $2^{< L}$ 
that is isomorphic (in both tree and lexicographic orders) to
$\bigcup_{\ell\in L(A)} A\upharpoonright\ell$,
 along with  the following information:
 the type of each level of interest,
 the node(s) in the level  involved in the type,
 and in the case of a vertex node,
 the  value ($0$ or $1$) of the immediate successors of all non-vertex nodes in that level
 (equivalently, the edge/non-edge relations between the vertices in $G$).
Two graphs $F$ and $G$ are said to {\em have the same diary} 
iff $\Delta(F)$ and $\Delta(G)$  are equal.
\end{definition}

Notice that the number of diaries for a given finite graph $G$ is finite.
Also, if $G\sse T$ induces a diary, then every subgraph of $G$ also induces a diary. 
We now (computably in $\mathbb{H}_{n+1}$) construct 
a subtree $\mathbb{A}$ of $T$ that will induce a diary $\mathbb{D}_{n+1}$  whose vertex nodes form
an order-isomorphic copy $\widehat{\mathbb{H}}_{n+1}$.
Our construction closely follows  constructions in  \cite{DobrinenJML20} and \cite{DobrinenJML23}.
%the modifications being the simplification of the bound on $p$ and the addition of reduced nodes. 
A general construction of such diaries for free amalgamation classes with finitely many binary and unary relations, with finitely many forbidden finite irreducible substructures,  is found in \cite{Dobrinen/Zucker23}.
Here, we provide a concrete 
construction   for  Henson graphs.

\begin{definition}\label{defn.ai}
Given the tree $T$ induced by $\mathbb{H}_{n+1}$, for a subtree $S\sse T$ with vertex nodes $\{v_i:i<N\}$, where $N\le \om$,
let  $\Lev_S(v_i)$ denote the set of nodes in $S$ of length $|v_i|$,
and let
$\Lev^+_S(v_i)$ denote the set of nodes in $S$ of length $|v_i|+1$.
We say that a subtree $S\sse T$ is {\em essentially isomorphic} to $T$ iff the following hold:
\begin{enumerate}
\item[(a)]
The vertex nodes in $S$ are  $\{v_i:i<\om\}$, where $v_i=\hat{v}_{j_i}$ for some $j_i$, and $\mathbb{H}_{n+1}\upharpoonright\{v_i:i<\om\}$ is order-isomorphic to $\mathbb{H}_{n+1}$.
\item[(b)]
The vertex nodes in $S$ are exactly the terminal nodes in $S$.
\item[(c)]
For each $i<\om$,
$\Lev_S^+(v_i)$ has the same 
the number of nodes  as $\Lev_T^+(\hat{v}_i)$.
\item[(d)]
For each $i<\om$, 
letting $f$ be the lexicographic order-preserving map from 
$\Lev_T^+(\hat{v}_i)$ to
$\Lev_S^+(v_i)$,
 for each $x\in \Lev_T^+(\hat{v}_i)$ and each $j\le i$,
$(f(x))(|v_j|)=x(|\hat{v}_j|)$.
\end{enumerate}
\end{definition}
Note that 
(d) says that the $1$-type of $x$ over $\mathbb{H}_{n+1}\upharpoonright\{\hat{v}_j:j\le i\}$ is the same as the $1$-type of $f(x)$ over $\mathbb{H}_{n+1}\upharpoonright\{v_j:j\le i\}$, upon substituting $v_j$ for $\hat{v}_j$.
Hence,
(c) and (d) imply (a). 
Further, (d) implies  that for each  $1$-type over  $\mathbb{H}_{n+1}\upharpoonright\{v_j:j\le i\}$, there is exactly one node in  $\Lev_S^+(v_i)$
  with that $1$-type.
In particular, this implies that each node in $\Lev_S^+(v_i)$ has infinitely many vertices extending it (by the Extension Property of $\mathbb{H}_{n+1}$). 
Next  is the main theorem of the Appendix.

\begin{theorem}\label{thm.diary}
Given $\mathbb{H}_{n+1}$,
there is a   subtree $\mathbb{A}\sse T$, computable in $\mathbb{H}_{n+1}$,
such that the following hold:
\begin{enumerate}
\item 
$\mathbb{A}$ is  computable in $\mathbb{H}_{n+1}$ and is essentially isomorphic to $T$.
\item
$\mathbb{A}$ induces a diary, denoted by $\mathbb{D}_{n+1}$, which is computable in $\mathbb{H}_{n+1}$.
\item 
The vertex nodes in $\mathbb{D}_{n+1}$
are the same as the vertex nodes in
$\mathbb{A}$, and  
these  induce an order-isomorphic copy $\widehat{\mathbb{H}}_{n+1}$ of 
$\mathbb{H}_{n+1}$.
\item
Every collection of vertices in 
$\widehat{\mathbb{H}}_{n+1}$ induces a diary.
\item 
Let $G$ be a
 finite $(n+1)$-clique free graph and  $\Delta$ be   a diary for $G$.
The set of all $(|G|+1)$-tuples $\{m,j_0,j_1,\dots,j_{|G|-1}\}$ 
of vertices 
$\{\hat{v}_{j_0},\dots,\hat{v}_{j_{|G|-1}}\}$ from copies of $G$ in 
$\widehat{\mathbb{H}}_{n+1}$,
where $\hat{v}_{j_0}=\hat{v}_{m,j_0}$,
with induced diary $\Delta$  is largely $2$-extendable.
\item 
Let $G$ be a
 finite $(n+1)$-clique free graph and  $\Delta$ be   a diary for $G$.
 For each copy $G'$ of $G$ in  $\widehat{\mathbb{H}}_{n+1}$ with induced diary  $\Delta$, let $L(G')$ denote the set of interesting  levels  of $A(G')$.
  Then the set of these $L(G')$ is
 largely $2$-extendable.
\end{enumerate}
\end{theorem}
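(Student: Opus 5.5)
We will build $\mathbb{A}$ by a stagewise construction in the style of the greedy algorithm of Subsection~\ref{greedy}, following the incremental coding tree constructions of \cite{DobrinenJML20,DobrinenJML23}. At stage $i$ we will have a finite tree $\mathbb{A}_i\subseteq T$ whose top level is $\Lev^+_{\mathbb{A}_i}(v_{i-1})$. This level will be in lexicographic-order-preserving bijection $f_i$ with $\Lev^+_T(\hat{v}_{i-1})$, satisfying (c) and (d) of Definition~\ref{defn.ai}.

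To pass to stage $i+1$ we extend the top level through a finite block of new levels of $T$, chosen computably by searching $\mathbb{H}_{n+1}$. First, we split off the nodes needed so that the node $f_i(x)$, where $x=\hat{v}_i\upharpoonright|\hat{v}_{i-1}|+1$, gets a new vertex $v_i$ above it. The splits are done one per level, with all other nodes extended by $0$, as required by (3a) of Definition~\ref{def.diary}. Next, we schedule the age-changes the new nodes must undergo. Each one is performed one at a time (3b), and always consecutively: all age-changes for proper subsets and smaller $m'$ are done before any larger one, as in Definition~\ref{defn.conagechange}. Then we place $v_i$ itself (3c). Finally, we add the reductions required by (4) at the next levels, again as separate consecutive age-changes.

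Each required node exists by property $(*)$ and the Extension Property. The one constraint is $K_{n+1}$-freeness, which is respected precisely because a level set in $\AC(m,p)$ only occurs with $p\le n+1-m$. Since every search is bounded by these existence facts, the construction is computable in $\mathbb{H}_{n+1}$. This gives (1), (2), and (3): the terminal nodes are exactly the vertices, and (d) gives order-isomorphism. Property (4) then holds because every meet-closure of vertices of $\mathbb{A}$ inherits the one-event-per-level structure, splitting, consecutiveness, and reducedness from $\mathbb{A}$.

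For (5) and (6) we will use Lemma~\ref{le1}. Fix $\Delta$, and let $\tilde{\mathcal{F}}$ consist of those copies whose interesting levels are built in the order prescribed by $\Delta$. Clauses (1) and (2) of Definition~\ref{def.l2e} follow because, by (d), each node of $\Lev^+_{\mathbb{A}}(v_i)$ has infinitely many vertex extensions with every admissible $1$-type. Hence the first vertex and its neighborhood index $m$ can be pushed arbitrarily high, and for a fixed suitable $m$ there are infinitely many $j_0$.

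The main step is the large $1$-point extension property: given a partial copy realizing an initial segment of $\Delta$ and a bound $l$, we must find the next interesting level, or next vertex $j_r$, of the correct type above $l$. For this we use essential isomorphism again. Each node of $\mathbb{A}$ at the current top level has extensions in $\mathbb{A}$ realizing any later splitting, consecutive age-change, or vertex event allowed by its $1$-type, and these occur at arbitrarily high levels. The reason is that the construction revisits every $1$-type at every stage $i$, so we may pick $i$ with $|v_i|>l$.

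The hardest part will be checking that the remaining pieces of $\Delta$ (the pending age-changes and reductions) can be realized after this choice, so that the resulting tuple does not become a stub. We will handle this by inducting along the levels of $\Delta$ and verifying that consecutiveness depends only on the relative order of events, not on their absolute heights.
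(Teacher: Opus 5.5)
Your architecture matches the paper's. Between $v_{i-1}$ and $v_i$ you have a splitting interval, a block of scheduled consecutive age-changes, reductions, and a vertex. For (5)--(6) you argue, via essential isomorphism, that the next interesting level of type (a), (b) or (c) can be realized arbitrarily high. The completion step you flag as hardest is one the paper also only asserts.

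\textbf{The ordering inside a stage breaks the construction.} You place $v_i$ and only afterwards add ``the reductions required by (4).'' By Definition~\ref{defn.ccn}, a node $x$ is reduced at a vertex $v$ only if $\{v\upharpoonright\ell,x\upharpoonright\ell\}$ lies in $K(m)$ (or $K(m-1)$) for some $\ell<|v|$. So the witnessing age-change must happen on the branch \emph{below} $v$. Since $v_i$ is terminal in $\mathbb{A}$ (clause (b) of Definition~\ref{defn.ai}), nothing built at levels above $|v_i|$ can reduce anything at $v_i$. As written, the nodes of $\Lev^+_{\mathbb{A}}(v_i)$ with value $0$ at $|v_i|$ are in general not reduced. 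Then clause (4) of Definition~\ref{def.diary} fails, $\mathbb{A}$ induces no diary, and (2), (4), and with them (5)--(6), are lost. The repair is the paper's reducing interval, placed before the vertex. After the extra split that creates the branch for $v_i$ and after the age-change block, extend that branch together with each node that must take value $0$ at $v_i$ to a reduced pair, one consecutive age-change per level. Only then choose $v_i$ above that branch.

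\textbf{Your existence justification is not what makes the searches succeed.} You argue ``by $(*)$ and the Extension Property; the only constraint is $K_{n+1}$-freeness, respected because $p\le n+1-m$.'' Property $(*)$ only yields nodes of $T$ with prescribed $1$-types. Whether a scheduled age-change, or a required successor ${x}^{\frown}1$ in $\Lev^+_{\mathbb{A}}(v_i)$, exists above a level set of $\mathbb{A}$ depends on which subsets of that set already lie in which $K(m)$. Those memberships are witnessed by cliques among \emph{all} vertices of $T$, not only the chosen $v_j$, so clauses (c) and (d) do not control them. For example, in $\mathbb{H}_3$, suppose two nodes of $\mathbb{A}$ picked up a common neighbor $\hat v_k$ not among the $v_j$. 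Then no vertices above them could ever be adjacent, even though the corresponding nodes of $T$ may allow it, and (d) would fail later.

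What you must carry along is the paper's inductive hypothesis (3): a set of nodes in the top level of $\mathbb{A}$ is in $K(m)$ iff the corresponding set in $\Lev^+_T(\hat v_j)$ is. The scheduled age-changes are exactly those that preserve this invariant, and it is this invariant that guarantees every search terminates. The bound $p\le n+1-m$ only tells you which age-changes can occur at all.

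Relatedly, the splitting interval needs one split for every node of $\Lev^+_T(\hat v_{i-1})$ with two successors in $T$, not just the split for $v_i$. Otherwise the new top level will not have $n(i)$ nodes.
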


\begin{proof}
Without loss of generality, assume that $\mathbb{H}_{n+1}$ is  computable with a standard enumeration.
Let $T$ denote the tree 
induced by the vertices in $\mathbb{H}_{n+1}$, noting that $T$ is computable in $\mathbb{H}_{n+1}$.
Recall that in the tree $T$, $\hat{v}_i$ is a sequence  in $2^i$, and for
 a node $s\in T$, $|s|$ denotes the length of $s$.
We will, computably in $\mathbb{H}_{n+1}$,  build a subtree $\mathbb{A}\sse T$ satisfying (1)--(6).

We  now give an overview of our construction of $\mathbb{A}$.
We will choose vertex nodes  $v_i=\hat{v}_{j_i}$  in $T$ so that $\mathbb{H}_{n+1}\upharpoonright\{v_i:i<\om\}$ is order-isomorphic to $\mathbb{H}_{n+1}$.
These will be the terminal nodes in $\mathbb{A}$.
For each $1\le i<\om$,
between the levels of $v_{i-1}$ and $v_i$  in $\mathbb{A}$,  there will  be three intervals of levels, each consisting of a different type of level.
The first is  a {\em splitting interval}.
Such an interval 
will contain levels that each contain exactly one
 splitting node, those of type (3a) in Definition \ref{def.diary}.
Letting $n(i)$ denote the number of nodes in 
$\Lev_T^+(\hat{v}_i)$,
there will be exactly $n(i)+1$ many splitting nodes in the splitting interval just preceding $v_i$.
Since these are the only splitting nodes between  $v_{i-1}$ and $v_i$ (or below $v_i$ if $i=0$), and since $v_i$ will be terminal in $\mathbb{A}$, it will follow that $\Lev_{\mathbb{A}}^+(v_i)$ has exactly $n(i)$ many nodes. 
The second type is a {\em consecutive age-change interval}.
The number of levels in this interval and the consecutive age-changes in it are completely determined by the age-changes that occur in $T$ at $\Lev_T^+(\hat{v}_i)$.
The third type is a
{\em reducing interval}.
Such an 
interval will contain any further consecutive age-changes necessary to reduce,  with $v_i$, each $s\in \Lev_{\mathbb{A}}^+(i)$ satisfying  $s(|v_i|)=0$.
(These are determined by $T$ up to the level of $\hat{v}_{i+1}$.)
The level of $\mathbb{A}$  containing $v_i$ will be minimal  above this reducing interval. 
In particular, 
$\mathbb{A}$ will satisfy the requirements in Definition \ref{def.diary} and thus induce a diary, which we denote by $\mathbb{D}_{n+1}$.

The construction of $\mathbb{A}$ begins with 
declaring $\Lev_{\mathbb{A}}(0)$ to consist of 
the root of  $T$, namely the empty sequence. This node will be  
 a splitting node in $\mathbb{A}$.
The tree $T$ has exactly two nodes 
in $\Lev_T^+(0)$, so 
let $n(0)=2$.
We will need  two nodes in $\mathbb{A}$ at the level of  $v_0$ in addition to $v_0$, so we need one more splitting node before choosing $v_0$.
Let $s_1$ be the shortest  splitting node  in $T$ extending  $\langle 1\rangle$, and let $s_0$ be the sequence of $0$'s of length $|s_1|$.
Let  $\Lev_{\mathbb{A}}(1)=\{s_0, s_1\}$.
This  $s_1$ is  the splitting node in $\Lev_{\mathbb{A}}(1)$.
This finishes the splitting interval below $v_0$.
In $T$, 
the only age-change 
in $\Lev_T^+(\hat{v}_0)$ is the node $\langle 1\rangle$, and it is consecutive and  is  already reduced with $\hat{v}_0$.  Thus, it just remains to choose $v_0$.
Let $j_0$ be least such that $\hat{v}_{j_0}$ extends 
${s_1}^{\frown}0$, and let $v_0=\hat{v}_{j_0}$.
Let $t_0$ be the sequence of  $0$'s of length $|v_0|$ and $t_1$ be the sequence of length $|v_0|$ that extends ${s_1}^{\frown}1$ by $0$'s.
(The extension of any node in $T$ by finitely many $0$'s is always again a node in $T$.)
Let  $\Lev_{\mathbb{A}}(2)=\{v_0,t_0,t_1\}$. 
Note that $\mathbb{A}$ up to level $2$ induces a diary. 
Let 
  $\Lev^+_{\mathbb{A}}(v_0)=
\{{t_0}^{\frown}0,{t_1}^{\frown}1\}$.
The levels of $\mathbb{A}$ we have constructed so far satisfy Definitions \ref{def.diary} and \ref{defn.ai} as well as the inductive hypotheses below.

Suppose now that $i\ge 1$ and we have 
 constructed $\mathbb{A}$ up to
 $\Lev_{\mathbb{A}}^+(v_{i-1})$
satisfying Definitions \ref{def.diary} and \ref{defn.ai}
  so that the following inductive hypotheses hold:
\begin{enumerate}
\item
For all $j<i$, 
$\Lev_\mathbb{A}(v_j)\setminus\{v_j\}$ and $\Lev_\mathbb{A}^+(v_j)$ each have  exactly $n(j)$ nodes, where $n(j)$ is the number of nodes in $\Lev_T^+(\hat{v}_j)$.
\item
Enumerating  $\Lev_\mathbb{A}^+(v_j)$ and $\Lev_T^+(\hat{v}_j)$ as $\{x_m:m<n(j)\}$ and $\{t_m:m<n(j)\}$,  respectively, in lexicographic order,
then for each $m<n(j)$, $x_m(|v_j|)=t_m(j)$ $(=t_m(|\hat{v}_j|))$.
%Moreover, $x_0$ is a splitting node in $T$.
\item
For each $P\sse n(j)$ of size at most $n$, $\{x_k:k\in P\}$ 
is in  $K(m)$
iff 
$\{t_k:k\in P\}$ is in 
 $K(m)$,
 where $2\le m\le n$.
\end{enumerate}
Note that (2) implies that  $\mathbb{H}_{n+1}\upharpoonright\{v_j:j<i\}$ is order-isomorphic to $\mathbb{H}_{n+1}\upharpoonright\{\hat{v}_j:j<i\}$.
(3) implies that we can continue building $\mathbb{A}$ above $v_j$ to make an almost isomorphic copy of $T$.

We now do the inductive step of constructing the splitting, consecutive age-change, and reducing intervals above $v_{i-1}$ and choosing $v_i$ and $\Lev_{\mathbb{A}}^+(v_i)$.
Let $X=\{x_m:m<n(i-1)\}$ list the nodes in $\Lev_\mathbb{A}^+(v_{i-1})$ in lexicographic order,
and let $Y=\{y_m:m<n(i-1)\}$ list the nodes in  $\Lev_T^+(v_{i-1})$ in lexicographic order.

We first construct the splitting interval above $v_{i-1}$.
Let $M=\{m_0,\dots,m_k\}$ be the set of those indices $m<n(i-1)$ such that $y_m$
is a splitting node in $T$.
Note that    $y_0$ and $x_0$ are both   sequences of $0$'s, so are they are both splitting nodes in $T$ and hence, $m_0=0$.
Let $s_{m_0}$ denote $x_0$. 
Successively choose splitting nodes $s_{m_0}, s_{m_1},\dots, s_{m_k}$ extending $x_0, x_{m_1},\dots, x_{m_k}$, respectively, 
so that $|s_0|<|s_{m_1}|<\dots<|s_{m_k}|$.
For each $j\le k$, let $s'_{m_j,0}$ and $s'_{m_j,1}$ be the nodes of length $|s_{m_k}|+1$ extending ${s_{m_j}}^{\frown}0$ and ${s_{m_j}}^{\frown}1$, respectively. 
For each $m\in n(i-1)\setminus M$,  let $s'_m$ be the node with length $|s_{m_k}|+1$ extending $x_m$ by $0$'s.
Let 
$$
X'=\{s'_{m_j,0}, s'_{m_j,1}:j\le k\}\cup \{s'_m:m\in n(i-1)\setminus M\}.
$$
Then $X'$ is a level set of nodes, extending the nodes in $X$, with the same size as $\Lev_T^+(\hat{v}_i)$, that is, $n(i)$.

Let $Z=\{z_m:m<n(i)\}$ enumerate the nodes in $\Lev_T^+(\hat{v}_i)$ in lexicographic order, and 
let  $\tilde{m}$ be the index  where $z_{\tilde{m}}=\hat{v}_i$.
List the nodes in $X'$ in lexicographic order as $\{x'_m:m<n(i)\}$. 
Extend $x'_{\tilde{m}}$ to a splitting node $s$ in $T$.
Let $x''_{\tilde{m},0}={x'_{\tilde{m}}}^{\frown}0$ and 
$x''_{\tilde{m},1}={x'_{\tilde{m}}}^{\frown}1$, and extend all other nodes in $X'$ by $0$'s to the length of $|x'_{\tilde{m}}|+1$.
Label this level set $X_0$.
Note that $X_0$ has $n(i)+1$ many nodes.  The node $x''_{\tilde{m},0}$ will be extended to a vertex node $v_i$ after we construct  the reducing interval. 
This concludes the construction of the splitting interval.

Second, we construct the   consecutive age-change interval.
The following is a  canonical way  to list, computably in $\mathbb{H}_{n+1}$, all consecutive age-changes that need to occur between $v_{i-1}$ and $v_i$ in $\mathbb{A}$.
List  all subsets $U$ of $\Lev_T^+(\hat{v}_i)$ which are 
in $\AC(m,p)$ for some
 $2\le m<n$ and  $1\le p
\le n+1-m$
and such that $U\not\in \AC(m+1,p)$, and  moreover,   $U$ being maximal in the sense that  if $p<n+1-m$ then for any other $y\in \Lev_T^+(\hat{v}_i)\setminus U$, $U\cup\{y\}$ is not in $\AC(m,p+1)$.
We extend the nodes in $X_0$ successively so that for each such $U$, 
we add consecutive age-changes for all subsets of $U$ 
(in order of size of the subsets of $U$) and extend all other nodes by $0$'s.\footnote{This is a byproduct of the constructions in \cite{DobrinenJML20} and \cite{DobrinenJML23}, and we refer the reader to these papers rather than reproduce the construction here.}
At the end, we have a level set of nodes $X_1$ of the same cardinality,  $n(i)+1$, as $X_0$ satisfying (3) of the induction hypothesis.
This is procedure is computable in $\mathbb{H}_{n+1}$.

Thirdly, we construct the reducing interval. 
Recall that there are $n(i)+1$ many nodes in  $X_1$.
Let $\tilde{x}$ denote the node in $X_1$ extending $x''_{\tilde{m},0}$;
thus, $\tilde{x}$ is the node that will be extended to $v_i$.
List the nodes in $X_1\setminus\{\tilde{x}\}$ as $\{x^1_m:m<n(i)\}$ in lexicographic order, and list the nodes in $\Lev^+_T(\hat{v}_i)$
in lexicographic order as $\{t_m:m<n(i)\}$.
Let $N$ be the set of those indices $m<n(i)$, $m\ne 0$,  for which $t_m(|\hat{v}_i|)=0$.
For $m_0$ least in $N$,
extend the pair 
 $x^1_{m_0},\tilde{x}$ to a reduced pair $x^2_{m_0}, \tilde{x}_{m_0}$.
 For  the next least $m_1$ in $N$, 
 extend $x^1_{m_1}$ by $0$'s to the length of $\tilde{x}_{m_0}$ and then 
 extend that pair  to a reduced pair 
 $x^2_{m_1},\tilde{x}_{m_1}$.  Note that $|\tilde{x}|<|\tilde{x}_{m_0}|<|\tilde{x}_{m_1}|<\dots$.
 For $m'=\max(N)$, extend $\tilde{x}_{m'}$ to a vertex node $v_i$.
 For each $m\in N$, let $x^3_m$ be the 
 extension of   $x^2_{m}$ by $0$'s to the length of $v_i$.
 For each node  $x_m^1$ in $X_1$ not considered yet, let $x^3_m$ be the extension of $x_m^1$ by $0$'s to the length of $v_i$.
Then let $\Lev_{\mathbb{A}}(v_i)=\{x^3_m:m<n(i)\}\cup\{v_i\}$.

 Lastly, for each $m<n(i)$, let $k_m$ be the member of $\{0,1\}$  so that 
 $z_m(|\hat{v}_i|)=k_m$
and let 
$$
\Lev^+_{\mathbb{A}}(v_i)=\{{x^3_m}^{\frown}k_m:m<n(i)\}.
$$
Our construction ensures that the induction hypotheses (1)--(3)  hold at this stage of the construction.

Let $\mathbb{A}$ be the closure under initial segments of $\bigcup_{i<\om}\Lev_{\mathbb{A}}(v_i)$.
Then $\mathbb{A}$ is computable in $\mathbb{H}_{n+1}$,
the vertex nodes $v_i$ of $\mathbb{A}$ form an antichain and are exactly the terminal nodes in $\mathbb{A}$, and $\mathbb{A}$ is almost isomorphic to $T$; so (1)  of the theorem holds.
The collection of the splitting node levels, consecutive age-change levels, and vertex node levels of $\mathbb{A}$ form a diary, call it $\mathbb{D}_{n+1}$.
Moreover, these levels are determined by $\mathbb{H}_{n+1}$ and $\mathbb{A}$, so $\mathbb{D}_{n+1}$ is computable in $\mathbb{H}_{n+1}$ and hence  (2) holds. 
(3) follows from the definition of $\mathbb{D}_{n+1}$ and the fact that $\mathbb{A}$ is almost isomorphic to $\mathbb{H}_{n+1}$.
If $C$ is a collection of  vertices in $\widehat{\mathbb{H}}_{n+1}$,
then $C$ is a collection of vertices in $\mathbb{D}_{n+1}$. The downward closure of $C$ in $\mathbb{D}_{n+1}$ is a diary.
Thus, (1)--(4) of the theorem hold.

We now prove (6), from which (5) directly follows.
Fix  $G$, a finite $(n+1)$-clique free graph in $\widehat{\mathbb{H}}_{n+1}$, and let $\Delta$ denote its induced diary.
Let 
$g=|G|$ and let
 $\tilde{k}$ be the number of levels in $\Delta$.
Let $\mathcal{G}$ be the collection of all copies $G'$ of $G$ in $\widehat{\mathbb{H}}_{n+1}$  for which $\Delta(G')$ equals $\Delta$.
Let $\mathcal{F}$ be the set of
all $\tilde{k}$-tuples $L(G')$, where  $G'\in\mathcal{G}$.
We will show that $\mathcal{F}$ is largely $2$-extendable.
Recall that since  $\mathbb{H}_{n+1}$, and hence $\widehat{\mathbb{H}}_{n+1}$, has a standard enumeration, every vertex node in $\mathbb{A}$ is off the spine.
It follows that for each $G'\in \mathcal{G}$, the least level of 
$\Delta(G')$ consists of a splitting node  on the spine of $\mathbb{A}$.
By Theorem \ref{local1},  
there are infinitely many splitting nodes  in the spine of $\mathbb{A}$.
Hence, $\mathcal{F}_1$ is infinite, so 
 (1) of Definition \ref{def.l2e} holds.

Now fix any $\ell_0\in\mathcal{F}_1$, and fix some
 $G'\in\mathcal{G}$ with $\ell_0$  least in $L(G')$.
 Then $\ell_0$ is 
 the length of 
 the  least node  $s_0$ in the meet-closure $A$ of $G'$.
Let $\ell_1$  be the second member of $L(G')$, and let $X=A\upharpoonright\ell_1$.
Note that $X$ can have one or two nodes. 
$X$ must have 
 a node off the spine, call it $x_1$,
and $X$ may or may not have a node on the spine, call it $x_0$ if there is one. 
Note that $x_1$ extends ${s_0}^{\frown}1$.

If $x_1$ is a vertex node, then it follows from $\mathbb{A}$ being essentially isomorphic to $T$ that there are infinitely many vertex nodes in $\mathbb{A}$  extending ${s_0}^{\frown}1$.
If $x_1$ is a consecutive age-change, then it  is in $\Con(2,1)$, since ${s_0}^{\frown}1$ is already in $\Con(1,1)$.
Since $\mathbb{A}$ essentially isomorphic to $T$,
there are infinitely many nodes extending ${s_0}^{\frown}1$ in $\Con(2,1)$.
If $x_1$ is a splitting node,
then again,  since $\mathbb{A}$ is essentially isomorphic to $T$,
there are infinitely many splitting nodes extending ${s_0}^{\frown}1$.
All of the above cases work for $X$ of size $1$ or $2$.
Lastly, if $X$ has size $2$ and $x_0$ is a splitting node, 
then there are infinitely many splitting nodes on the spine extending ${s_0}^{\frown}0$.
In  all cases, 
there are infinitely many $\ell_1$ for which $(\ell_0,\ell_1)$ is in $\mathcal{F}_2$.
Thus,  (2) of Definition \ref{def.l2e} holds.  In fact, we proved more: for each $\ell_0\in\mathcal{F}_1$, the set of $\ell_1>\ell_0$ for which $(\ell_0,\ell_1)$ is in $\mathcal{F}_2$ is infinite.

The rest of the proof shows
that  (3) of Definition \ref{def.l2e} holds, by induction on 
$k<\tilde{k}$.  Suppose $(\tilde{\ell}_0,\tilde{\ell}_1,\dots, \tilde{\ell}_{\tilde{k}-1})\in\mathcal{F}$ and let $2\le k< \tilde{k}$.
We will show that  there are infinitely many  $\ell_k$ such that  
such that $(\tilde{\ell}_0,\dots, \tilde{\ell}_{k-1},\ell_k)\in\mathcal{F}_{k+1}$.
Let $G'\in\mathcal{G}$ be such that $L(G')=(\tilde{\ell}_0,\dots, \tilde{\ell}_{k-1},\ell_k)$.
Let $A$ be the meet-closure of $G'$ in $\mathbb{A}$,  let $X= A\upharpoonright \tilde{\ell}_{k}$, and let $A'$ be the truncation of $A$ to the level of $\tilde{\ell}_{k-1}$.

If $X$ is an interesting level of  type (a), 
then, letting $x$ denote the splitting node in $X$,  there are infinitely many splitting nodes $s$ in $\mathbb{A}$  extending $x$.
Extending the other nodes in $X$ leftmost in $\mathbb{A}$ to the level of $s$ makes another interesting level $Y$ of type (a) extending $A'$.
Such  $A'\cup Y$ can be extended to some member of $\mathcal{G}$, so there are infinitely many $\ell$ for which 
$(\tilde{\ell}_0,\dots, \tilde{\ell}_{k-1},\ell_k)$ is in $\mathcal{F}_{k+1}$.

Suppose that  $X$ is an interesting level of  type (b); that is, 
 $X$ is a consecutive age-change above $A'$.
For each $\ell'>\tilde{\ell}_{k-1}$, any leftmost extension of $X\upharpoonright(\tilde{\ell}_{k-1}+1)$ to length $\ell'$ has no age-change over $A'$; so it can be extended to a consecutive age-change over $A'$.
Hence, there are infinitely many 
$\ell$ for which 
$(\tilde{\ell}_0,\dots, \tilde{\ell}_{k-1},\ell_k)$ is in $\mathcal{F}_{k+1}$.

Lastly, suppose $X$ is an interesting level of  type (c);
that is, $X$ contains a vertex node, say $v_i$.
Let $x=v_i\upharpoonright(\tilde{\ell}_{k-1}+1)$.
For each $\ell'>\tilde{\ell}_{k-1}$, there is a vertex node $v_j$ extending $x$ of length at least $\ell'$. 
Extending all other nodes in 
$\Lev^+_{A}(\tilde{\ell}_{k-1})$ leftmost to the length of $v_i$ produces an interesting level of type (c) which can be extended to a graph in $\mathcal{G}$.
Thus, there are infinitely many $\ell$ for which 
$(\tilde{\ell}_0,\dots, \tilde{\ell}_{k-1},\ell_k)$ is in $\mathcal{F}_{k+1}$.

Thus, in all three cases, we see that $\mathcal{F}_{k+1}$ is largely $2$-extendable.  By induction on $k$, we see that $\mathcal{F}$ is largely $2$-extendable.
Part (5) of this theorem follows immediately from (6), since the tuples $\{m,j_0,\dots,j_{|G|-1}\}$
are subsets of $L(G')$ consisting of the minimum member of $L(G')$ and the lengths of the vertices in $G'$.
\end{proof}

%{\color{red}  I could make a lemma that shows or at least states how to computably in $\mathbb{H}_{n+1}$ make consecutive age-changes. This was done in \cite{DobrinenJML23}. }

 Theorem \ref{lem.D} is (6) of Theorem \ref{thm.diary}.  Theorems  \ref{dairies} follows below from the previous theorem, Theorem~\ref{thm.diary}.

\medskip

\noindent {\it Proof of Theorem \ref{dairies}.}
The fact that there is a computable function $k(G,n)$ such that there are $k(G,n)$ diaries corresponding to copies of $G$ in $\mathbb{H}_{n+1}$ follows from the definition of diary.  This was known by work in \cite{Balko7}, and can be seen from work in \cite{DobrinenJML23}.
That there is a  copy $\widehat{\mathbb{H}}_{n+1}$, computable in $\mathbb{H}_{n+1}$,  so that 
each copy of $G$ in $\widehat{\mathbb{H}}_{n+1}$, and hence in any subcopy $\tilde{\mathbb{H}}$ of $\widehat{\mathbb{H}}_{n+1}$, corresponds to exactly one diary, is a restatement of (1)--(4) in Theorem \ref{thm.diary}.
The constructions  of $\mathbb{A}$ and $\mathbb{D}_{n+1}$ being computable shows that 
there is a computable function $D(G,\tilde{\mathbb{H}})$ which outputs this diary  (using $\tilde{\mathbb{H}}$ as an oracle). 
That diaries for close copies of $G$ are different from the diaries for copies of $G$ which are not close follows from the definition of diary.  \hfill $\square$
\medskip

\bibliographystyle{plainnat}
\bibliography{H3b.bib}

@misc{Dobrinen/Zucker23,
	archiveprefix = {arXiv},
	author = {Natasha Dobrinen and Andy Zucker},
	eprint = {2303.04246},
	primaryclass = {math.LO},
	title = {Infinite-dimensional {R}amsey theory for binary free amalgamation classes},
	url = {https://arxiv.org/abs/2303.04246},
	year = {2023}}

@misc{DobrinenH_3ExactDegrees20,
	archiveprefix = {arXiv},
	author = {Natasha Dobrinen},
	eprint = {2009.01985},
	primaryclass = {math.CO},
	title = {The {R}amsey theory of the universal homogeneous triangle-free graph {P}art {II}: {E}xact big {R}amsey degrees},
	url = {https://arxiv.org/abs/2009.01985},
	year = {2020}}

@article{J72,
	author = {Jockusch, Jr., Carl G.},
	fjournal = {The Journal of Symbolic Logic},
	journal = {J. Symbolic Logic},
	pages = {268--280},
	title = {Ramsey's theorem and recursion theory},
	volume = {37},
	year = {1972}}

@article{MR3518779,
	author = {Hirschfeldt, Denis R. and Jockusch, Jr., Carl G.},
	doi = {10.1142/S0219061316500021},
	fjournal = {Journal of Mathematical Logic},
	issn = {0219-0613,1793-6691},
	journal = {J. Math. Log.},
	mrclass = {03D30 (03B30 03D80 03F35)},
	mrnumber = {3518779},
	mrreviewer = {Ludovic\ Patey},
	number = {1},
	pages = {1650002, 59},
	title = {On notions of computability-theoretic reduction between {$\Pi_2^1$} principles},
	url = {https://doi.org/10.1142/S0219061316500021},
	volume = {16},
	year = {2016}}

@article{DobrinenJML20,
	author = {Dobrinen, Natasha},
	journal = {J. Math. Log.},
	number = {2},
	pages = {Paper No. 2050012, 75 pp},
	title = {The {R}amsey theory of the universal homogeneous triangle-free graph},
	volume = {20},
	year = {2020}}

@article{DobrinenJML23,
	author = {Dobrinen, Natasha},
	journal = {J. Math. Log.},
	number = {1},
	pages = {Paper No. 2250018, 88 pp.},
	title = {The {R}amsey theory of {H}enson graphs},
	volume = {23},
	year = {2023}}

@article{Dobrinen_ICM,
	author = {Dobrinen, Natasha},
	journal = {ICM---International Congress of Mathematicians, Vol.\ 3, Sections 1--4},
	pages = {1462--1486},
	title = {Ramsey theory of homogeneous structures: current trends and open problems},
	year = {2023}}

@article{El-Zahar/Sauer89,
	author = {El-Zahar, Mohamed and Sauer, Norbert},
	journal = {Journal of Combinatorial Theory, Series B},
	number = {2},
	pages = {162--170},
	title = {The indivisibility of the homogeneous ${K}_n$-free graphs},
	volume = {47},
	year = {1989}}

@article{Komjath/Rodl86,
	author = {Komj{\'{a}}th, P{\'{e}}ter and R{\"{o}}dl, Vojt{\v{e}}ch},
	journal = {Graphs and Combinatorics},
	number = {1},
	pages = {55--60},
	title = {Coloring of universal graphs},
	volume = {2},
	year = {1986}}

@article{Balko7,
	author = {Balko, M. and Chodounsk\'y, D. and Dobrinen, N. and Hubi\v{c}ka, J. and Kone\v{c}n\'y, M. and Vena, L. and Zucker, A.},
	journal = {J.\ Eur.\ Math.\ Soc.},
	number = {5},
	pages = {2101--2150},
	title = {Exact big {R}amsey degrees for finitely constrained binary free amalgamation classes},
	volume = {28},
	year = {2026}}

@misc{Gill23,
	archiveprefix = {arXiv},
	author = {Kenneth Gill},
	eprint = {2310.20097},
	primaryclass = {math.LO},
	title = {A note on the indivisibility of the {H}enson graphs},
	url = {https://arxiv.org/abs/2310.20097},
	year = {2023}}

@article{MR268080,
	author = {Folkman, Jon},
	doi = {10.1137/0118004},
	fjournal = {SIAM Journal on Applied Mathematics},
	issn = {0036-1399},
	journal = {SIAM J. Appl. Math.},
	mrclass = {05.55},
	mrnumber = {268080},
	mrreviewer = {R.\ L.\ Graham},
	pages = {19--24},
	title = {Graphs with monochromatic complete subgraphs in every edge coloring},
	url = {https://doi.org/10.1137/0118004},
	volume = {18},
	year = {1970}}

@incollection{MR278941,
	author = {Specker, E.},
	booktitle = {Logic {C}olloquium '69 ({P}roc. {S}ummer {S}chool and {C}olloq., {M}anchester, 1969)},
	mrclass = {02.70},
	mrnumber = {278941},
	mrreviewer = {James\ D.\ Halpern},
	pages = {439--442},
	publisher = {North-Holland, Amsterdam-London},
	series = {Stud. Logic Found. Math.},
	title = {Ramsey's theorem does not hold in recursive set theory},
	volume = {Vol. 61},
	year = {1971}}

@article{dauriac2023carlsonsimpsonslemmaapplicationsreverse,
	author = {Angl{\`e}s d'Auriac, Paul-Elliot and Liu, Lu and Mignoty, Bastien and Patey, Ludovic},
	journal = {Ann. Pure Appl. Logic},
	number = {9},
	pages = {Paper No. 103287, 16 pp.},
	title = {Carlson-{S}impson's lemma and applications in reverse mathematics},
	volume = {174},
	year = {2023}}

\end{document}